\documentclass[11pt]{amsart}
\usepackage[utf8]{inputenc}

\usepackage{amsmath, amstext, amssymb, bold-extra, graphicx, tikz, amsthm, amsfonts, hyperref,longtable, bm, nicefrac,color,bbm,mathrsfs}
\usepackage[floatrow]{chemstyle}
\usetikzlibrary{calc}
 \usepackage[nocompress]{cite}

\usepackage{mdwlist}
\usepackage{todonotes}
\usepackage{enumerate}
\RequirePackage{color}
\definecolor{my-blue}{rgb}{0.0,0.0,0.6}
\definecolor{my-red}{rgb}{0.5,0.0,0.0}
\definecolor{my-green}{rgb}{0.0,0.5,0.0}
\definecolor{nicos-red}{rgb}{0.75,0.0,0.0}
\definecolor{light-gray}{gray}{0.6}
\definecolor{really-light-gray}{gray}{0.8}
\definecolor{sussexg}{rgb}{0.0,0.5,0.5}
\definecolor{sussexp}{rgb}{0.5,0.0,0.5}

\usepackage{tikz}
\usepackage{subcaption}
\usepackage{mdwlist}
\usepackage{todonotes}
\usepackage{enumerate}
\usepackage{stackengine}
\usetikzlibrary{datavisualization} 
\usetikzlibrary{datavisualization.formats.functions} 
\usetikzlibrary{shapes.misc}
\tikzset{cross/.style={cross out, draw=black, minimum size=2*(#1-\pgflinewidth), inner sep=0pt, outer sep=0pt},
cross/.default={2pt}}

\newtheorem{theorem}{\textcolor{my-red}{\sc Theorem}}[section]
\newtheorem{lemma}[theorem]{\textcolor{my-green}{\bf { Lemma}}}
\newtheorem{proposition}[theorem]{\textcolor{my-green}{{\bf Proposition}}}
\newtheorem{corollary}[theorem]{\textcolor{my-green}{{\bf Corollary}}}

\newtheorem{remark}[theorem]{\textcolor{blue}{\bf Remark}}
\numberwithin{equation}{section}
\theoremstyle{remark}

\newcommand{\be}{\begin{equation}}
\newcommand{\ee}{\end{equation}}

\providecommand{\P}[1]{\langle#1\rangle}
\newcommand{\fl}[1]{\left\lfloor{#1}\right\rfloor}

\def\bE{\mathbb{E}}
\def\bN{\mathbb{N}}
\def\bP{\mathbb{P}}
\def\bQ{\mathbb{Q}}
\def\bR{\mathbb{R}}
\def\bZ{\mathbb{Z}}

\def\e{\varepsilon}

\def\om{\omega}

 \def\Z{\bZ}
\def\Q{\bQ}
\def\R{\bR}
\def\N{\bN}
\def\E{\bE}
\def\P{\bP}

\newcommand\barbelow[1]{\stackunder[1.2pt]{$#1$}{\rule{.8ex}{.075ex}}}

\DeclareMathOperator*{\argmin}{arg\,min}

\setlength{\textwidth}{\paperwidth}
\addtolength{\textwidth}{-2.5in}
\calclayout

\allowdisplaybreaks

\begin{document}
\date{\today}
\title[LDP for Bernoulli last passage times] 
{A Large deviation principle for last passage times in an asymmetric Bernoulli potential}

\author[F.~Ciech]{Federico Ciech}
\address{Federico Ciech\\ University of Sussex\\ Department of  Mathematics \\ Falmer Campus\\ Brighton BN1 9QH\\ UK.}
\email{F.Ciech@sussex.ac.uk}
\urladdr{http://www.sussex.ac.uk/profiles/395447} 

\author[N.~Georgiou]{Nicos Georgiou}
\address{Nicos Georgiou\\ University of Sussex\\ Department of  Mathematics \\ Falmer Campus\\ Brighton BN1 9QH\\ UK.}
\email{N.Georgiou@sussex.ac.uk}
\urladdr{http://www.sussex.ac.uk/profiles/329373} 

\thanks{N. Georgiou was partially supported by the EPSRC First Grant EP/P021409/1: The flat edge in last passage percolation.}

\keywords{Bernoulli corner growth model, large deviations,  flat edge, exactly solvable models, Burke's property, shape function}
\subjclass[2000]{Primary: 60K37, Secondary: 60K35, 60F10}
\begin{abstract}
	We prove a large deviation principle  and give an expression for the rate function, for the last passage time in a Bernoulli environment. 
	The model is exactly solvable and 
	its invariant version satisfies a Burke-type property. 
	Finally, we compute explicit limiting logarithmic moment generating functions 
	for both the classical and the invariant models. The shape function of this model exhibits a flat edge in certain directions, and
	 we also discuss the rate function and limiting  log-moment generating functions in those directions. 
\end{abstract}

\maketitle 
\setcounter{tocdepth}{1}
\tableofcontents

\section{Introduction}

\subsection{Brief description of the model} We study large deviations for the last passage time in a Bernoulli environment. The original model was introduced in \cite{Sep-98-aop-2} as a simplified model of directed first passage percolation. In this model, the environment $\eta = \{ \eta^{\kappa, \lambda}_v \}_{v \in \Z^2_+}$ is a collection of i.i.d.\ Bernoulli($p$)  under a background measure $\P$ with marginals 
\[
\P\{ \eta_v = \lambda\} = p = 1 - \P\{ \eta_v = \kappa\}, \quad \kappa > \lambda \in \R_+, v \in \Z^2_+.
\] 
The set of admissible paths from $(0,0)$ to $(m,n) \in \Z^2_+$ is denoted by $\Pi_{m,n}$ and it contains all paths of the form  
\[
\pi_{(0,0),(m,n)}=\{0=v_0,v_1,\dots,v_{m+n}=(m,n)\}, 
\]
so that $v_{i+1} - {v_i} \in \mathcal R = \{ e_1, e_2\}$. We say that $\mathcal R$ is the set of admissible steps. The random variable under consideration is the ``first passage time" 
\be \notag
L^{\kappa, \lambda, p}_{(0,0), (m,n)} = \inf_{ \pi \in \Pi_{m,n}} \sum_{v_i \in \pi}V (T_{v_i}\eta, v_{i+1}-v_i), 
\ee
where $T_v$ denotes the shift by $v \in \Z^2_+$ and $V: \Omega \times \mathcal R \to \R$ is the potential function given by 
\be\notag
V(\eta, z) = \eta_{e_1}1\!\!1\{ z = e_1\}+ \tau_01\!\!1\{ z = e_2\}.
\ee
Value $\tau_0$ was constant and fixed from the beginning. The interest was to find the explicit shape function 
\[
\mu(s,t) = \lim_{n \to \infty} \frac{L^{\kappa, \lambda, p}_{(0,0), (\fl{ns},\fl{nt})}}{n}.
\]
The model can be mapped into a last passage directed percolation by two observations. First, because the admissible paths are directed the number of vertical increments $z =e_2 \in \mathcal R$ are fixed for any fixed endpoint $(m.n)$ (in fact they are $n$) and the cost for crossing them is deterministic $\tau_0$. Thus, for simplification $\tau_0$ can be set to be zero. Second,  since $\lambda < \kappa$, to minimize $L^{\kappa, \lambda, p}$ one should try and take horizontal steps $e_2 \in \mathcal R$ when the value of the environment at the target site is $\lambda$. Define new environment 
\be\label{eq:envom}
\omega_v = \frac{1}{\kappa-\lambda}(\kappa - \eta_v) \sim {\rm Ber}(p) \in \{ 0 , 1\}.
\ee
Then define the last passage time
\be\label{eq:lptV}
 G^V_{(0,0),(m,n)} = \max_{\pi_{(0,0),(m,n)} \in \Pi_{(0,0), (m,n)}} \bigg\{ \sum_{v_i \in \pi} V(T_{v_i}\omega, v_{i+1}-v_i) \bigg\}.
\ee
The value of  $G^V$ gives the number of horizontal steps through environment $\omega_v = 1$, equivalently $\eta_v = \lambda$. 
Each of the remaining horizontal steps contributes  $\kappa$ to $L^{\kappa, \lambda, p}$ and therefore we have
\be\label{conn}
L^V_{(0,0),(m,n)} =(\lambda-\kappa) G^{V}_{(0,0),(m,n)} + \kappa m+\tau_0 n.
\ee
Therefore, for simplicity we study the last passage time $G^V$ given by \eqref{eq:lptV}, in environment $\omega$ given \eqref{eq:envom}, under potential $V$ given by  
\be\label{eq:potential}
V(\omega, z) = \omega_{e_1}1\!\!1\{ z = e_1\}.
\ee
By \eqref{conn} one can translate all results to $L^V$. Now that $V$ is specified we omit it from the notation. We also omit $(0,0)$ as the starting point, when it is implied. Therefore, the last passage time \eqref{eq:lptV} is simply denoted by $G_{m,n}$. If the starting point is $(k, \ell)$ we write $G_{(k, \ell), (m,n)}$.

\begin{figure}
\centering
		\begin{tikzpicture}[>=latex,scale=0.8]
			\draw[<->](6.5,0)--(0,0)--(0, 6.5); 
			\draw[line width=3pt, color=sussexg](0,0)--(0,1)--(2,1)--(2,2)--(3,2)--(3,3)--(4,3)--(4,5)--(5,5)--(6,5)--(6,6)--(6.3,6);
					
			\foreach \x in {1,2,...,6}
							{
							\draw(\x,0)--(\x,6.3);
							\draw(0,\x)--(6.3, \x);
							\fill[color=white](\x,0)circle(1.3mm);
							\draw(\x,0)circle(1.3mm);
							\fill[color=white](0,\x)circle(1.3mm);
							\draw(0,\x)circle(1.3mm);
							}				
			\foreach \x in {1,2,...,6}
						{
						\foreach \y in {1,2,...,6}	
							{
							\fill[color=white](\x,\y)circle(1.3mm);
							\draw(\x,\y)circle(1.3mm);
							}
						}
			\foreach \x in {1, 4}
						{
						\foreach \y in {1,3,4,6}	
							{
							\fill[color=nicos-red](\x,\y)circle(1.3mm);
							\draw(\x,\y)circle(1.3mm);
							}
						}
			\foreach \x in {2}
						{
						\foreach \y in {1,2,3,4,5,6}	
							{
							\fill[color=nicos-red](\x,\y)circle(1.3mm);
							\draw(\x,\y)circle(1.3mm);
							}
						}
			\foreach \x in {3,5}
						{
						\foreach \y in {2,5}	
							{
							\fill[color=nicos-red](\x,\y)circle(1.3mm);
							\draw(\x,\y)circle(1.3mm);
							}
						}	
			\draw[line width=2pt, color=nicos-red](1,1)circle(4mm);	
			\draw[line width=2pt, color=nicos-red](2,1)circle(4mm);
			\draw[line width=2pt, color=nicos-red](3,2)circle(4mm);
			\draw[line width=2pt, color=nicos-red](4,3)circle(4mm);	
			\draw[line width=2pt, color=nicos-red](5,5)circle(4mm);											
		\end{tikzpicture}
\caption{ A possible representation of the maximal path (green thick line) given a fixed random environment. The red dots represent the sites where  $\om=1$ while the white dots are when $\om=0$. The circled dots are the ones that the maximal path collects following the potential \eqref{eq:potential}. Finally note that despite there is a column of red dots the maximal path doesn't spend a lot of time there. This is due to the fact that by \eqref{eq:potential} it cannot collect through an $e_1$ step. This means that the maximal path does not really see columns with high density of Bernoulli successes.}
		\end{figure}
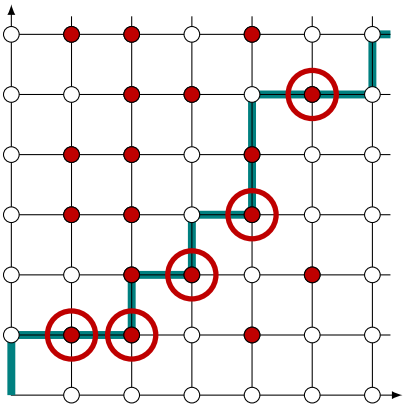
 
The law of large numbers for $G_{m,n}$ was first found in \cite{Sep-98-aop-2} by first obtaining invariant distributions for an embedded totally asymmetric particle system. 
Most recently the LLN was reproved in \cite{basdevant2015discrete} using an invariant boundary model with sources and sinks. The same idea was utilised in the same article for the discrete version of Hammersley's process \cite{hammersley1972few}, introduced in \cite{seppalainen1997increasing}. The theorem states
\begin{theorem}[The shape function for $G_{\fl{Ns}, \fl{Nt}}$ \cite{Sep-98-aop-2,basdevant2015discrete}]
\label{thm:LLNp}
Fix $p$ in $(0,1)$ and $(s,t) \in \R^2_+$. Then we have the explicit law of large numbers limit
\begin{align}
g_{pp}(s,t)=\lim_{N \to \infty} \frac{G_{\fl{Ns}, \fl{Nt}}}{N} 
&=\begin{cases}\big(\sqrt{ps}+\sqrt{(1-p)t}\big)^2-t, \qquad &t<s\frac{1-p}{p}\\
s, \qquad& t\geq s\frac{1-p}{p}.
\end{cases}\label{eq:busopt}
\end{align}
\end{theorem}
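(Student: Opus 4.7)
The plan is to establish the shape function in two stages: first secure existence of the limit by subadditivity, then identify it explicitly by analysing a stationary boundary version of the model. Since the potential $V$ in \eqref{eq:potential} is bounded, the family $\{G_{(k,\ell),(m,n)}\}$ is superadditive along path concatenations, so the (super)additive ergodic theorem applied along rational directions $(s,t)\in\mathbb Q_+^2$ gives a deterministic, concave, continuous, $1$-homogeneous limit $g_{pp}(s,t)=\lim_N N^{-1}G_{\lfloor Ns\rfloor,\lfloor Nt\rfloor}$, which then extends by continuity to all of $\mathbb R_+^2$. The entire remaining work is to identify this function explicitly.

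The identification proceeds through an invariant boundary version of $G$ on the quadrant. One puts independent boundary weights drawn from a one-parameter family indexed by some $z$ in an admissible interval $I$ on the two axes, keeps the bulk weights i.i.d.\ Bernoulli$(p)$, and defines $G^{\mathrm{bdry}}$ by the same last-passage maximum. For the correct (asymmetric, because of \eqref{eq:potential}) choice of axis marginals, one verifies a Burke-type invariance: the horizontal and vertical increments of $G^{\mathrm{bdry}}$ along any antidiagonal have the same joint law as the axis increments and are independent of the bulk weights to their southwest. This is established by a local, single-cell update computation showing that $G^{\mathrm{bdry}}_{m,n}=\max\{G^{\mathrm{bdry}}_{m-1,n}+\omega_{m,n},\,G^{\mathrm{bdry}}_{m,n-1}\}$ preserves the product boundary distribution when translated across a unit square.

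With invariance in hand, $\mathbb E[G^{\mathrm{bdry}}_{\lfloor Ns\rfloor,\lfloor Nt\rfloor}]$ becomes a sum of $\lfloor Ns\rfloor$ independent horizontal increments and $\lfloor Nt\rfloor$ independent vertical increments, giving an affine expression $N\bigl(s\,a(z)+t\,b(z)\bigr)+o(N)$ for explicit $a(z),b(z)$; concentration upgrades this to an almost sure limit. A sandwich comparison with the bulk model (the boundary process dominates $G$, and is asymptotically matched by $G$ once the boundary contribution is subtracted, after optimisation over where a geodesic enters the bulk) yields the variational identity
\[
g_{pp}(s,t)\,=\,\inf_{z\in I}\bigl\{s\,a(z)+t\,b(z)\bigr\}.
\]
Differentiating in $z$ produces an interior critical point $z^\star(s,t)$; substituting back gives, after elementary algebra, the curved branch $(\sqrt{ps}+\sqrt{(1-p)t})^2-t$.

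The flat edge falls out of the same variational formula. When $t\geq s(1-p)/p$, the candidate $z^\star(s,t)$ leaves the admissible interval $I$, so the infimum is attained at the boundary of $I$ and evaluates to $s$. Probabilistically this is the trivial upper bound $G\leq m$: there are so many vertical steps that a geodesic can route every horizontal step through a site with $\omega=1$. One checks continuity at the threshold $t=s(1-p)/p$ directly from the formulas. The main obstacle is the Burke-type invariance in the second step: because the environment is discrete and bounded and the potential treats the two coordinate axes asymmetrically, the admissible axis distributions are discrete (geometric-type) with different laws on the two axes, and the one-cell invariance must be verified case-by-case. Once that exactly solvable input is granted, the remaining variational reduction and the flat-edge case distinction are routine.
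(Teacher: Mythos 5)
Your plan coincides with the paper's own route for the curved branch: existence via superadditivity, the Bernoulli$(u)$/Geometric boundary model, the cell-by-cell Burke invariance (Lemma \ref{burke}, Corollary \ref{cor:downrightpath}), the law of large numbers $g^{(u)}_{pp}(s,t)=su+t\,\frac{p(1-u)}{u-p}$, and the identification $g_{pp}(s,t)=\inf_{u\in(p,1]}g^{(u)}_{pp}(s,t)$ obtained by comparing with the bulk model after optimising over the point where the path leaves the boundary (equation \eqref{eq:maxpath} and Proposition \ref{prop:dousup} in the paper). In that regime your sketch is fine in outline, with the caveat that the ``asymptotically matched by $G$ once the boundary contribution is subtracted'' step is exactly where the work lies: the easy direction of the sandwich is only $g_{pp}\le\inf_u g^{(u)}_{pp}$, and the matching direction needs the exit point of the boundary geodesic to be macroscopically negligible for a characteristic choice of $u$, which is what the convex-analytic Proposition \ref{prop:dousup} encodes.

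The genuine gap is the flat edge. You claim it ``falls out of the same variational formula'' because the critical point leaves $I$ and the infimum at the endpoint evaluates to $s$. But the nontrivial direction of the variational identity is unavailable there: characteristic directions $t=\frac{(u-p)^2}{p(1-p)}s$ sweep only $t\le\frac{1-p}{p}s$ as $u$ ranges over $(p,1]$, so for $t>\frac{1-p}{p}s$ every boundary parameter forces a macroscopic exit point, and indeed the paper's Proposition \ref{prop:dousup} is stated only for $t<-\frac{h'(b)}{g'(b)}s=\frac{1-p}{p}s$. At $u=1$ the identity \eqref{eq:maxpath} degenerates and yields nothing beyond the trivial bound $G_{\fl{Ns},\fl{Nt}}\le\fl{Ns}$, i.e.\ $g_{pp}(s,t)\le s$; evaluating the infimum at the endpoint therefore reproves the upper bound only. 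What is missing is $g_{pp}(s,t)\ge s$, and your one-line justification (``a geodesic can route every horizontal step through a site with $\omega=1$'') is precisely this unproved lower bound, not the ``trivial upper bound $G\le m$'' you label it as. The paper closes this by a separate construction: a greedy path that climbs in each column until it meets a $1$, so the total vertical displacement is a sum of $\fl{Ns}$ i.i.d.\ Geometric$(p)$ heights, which stays below $\fl{Nt}$ up to a large-deviation event when $\varliminf \fl{Nt}/\fl{Ns}\ge\frac{1-p}{p}+\e$, and Borel--Cantelli gives $G_{\fl{Ns},\fl{Nt}}=\fl{Ns}$ eventually. Alternatively you could repair your argument without the greedy path by using monotonicity of $G_{\fl{Ns},\fl{Nt}}$ in the vertical coordinate together with continuity of the curved formula, since $\big(\sqrt{ps}+\sqrt{(1-p)t'}\big)^2-t'\to s$ as $t'\nearrow\frac{1-p}{p}s$; either way, an argument beyond the variational formula is required for $t\ge\frac{1-p}{p}s$.
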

This is a concave, symmetric, 1-homogeneous differentiable function which is continuous up to the boundaries of $\R^2_+$. Together with the shape function for the discrete Hammersley \cite{seppalainen1997increasing}, are the first completely explicit shape functions for which strict concavity is not valid. In fact, the formula above indicates one flat edge, for $t>s\frac{1-p}{p}s$.  

This simplified Bernoulli model was studied further in \cite{Grav-Tra-Wid-01} where Tracy-Widom distributional limits were obtained for this and a generalised inhomogeneous version where the probability of success of the Bernoulli environment changes with the first coordinate of the site. Then the LLN was used for certain estimates in proving generalised properties of the shape functions of last passage percolation in \cite{Mar-04}. 

Other models for which a flat edge of the shape function exists are common, and well studied.  A flat edge for the contact process was observed in \cite{durrett1981shape, durrett1983supercritical}. The discrete Hammersley model treated in \cite{seppalainen1997increasing, Cie-Geo-17, georgiou2010soft, georgiou2018optimality}  and the inhomogeneous model in \cite{Emr-16} allow for an exact derivation of the limiting shape function and they also exhibit two flat edges. Large deviations for the latter were obtained in \cite{emrah2015large}. In the present article, we also study the behaviour of large deviations in directions for which the shape is flat for this classical Bernoulli model. 

Historically, models of last passage percolation for which explicit invariant distributions could be obtained for the embedded particle system \cite{Ros-81,  aldous1995hammersley, seppalainen1997increasing, Sep-98-aop-2}, satisifed a Burke-type property that led to invariant boundary corner growth models 
\cite{Bal-Cat-Sep-06, Gro-01, Cat-Gro-05, Cat-Gro-06, Jo-00, basdevant2015discrete, Cie-Geo-17}. The same is true for this model, and we discuss the Burke-type property in  Section \ref{sec:model3}. Boundaries can be constructed using sequences of i.i.d.\ Bernoulli and Geometric random variables, that play the role of Busemann functions for the last passage time model. We use the invariance  granted by the boundaries to reprove Theorem \ref{thm:LLNp} and verify the variational formula of \cite{georgiou2016variational}. Because of the asymmetry of the model and the dependence of $V$ on the path increment $z \in \mathcal R$, the general findings of \cite{Geo-Ras-Sep-15a-, Geo-Ras-Sep-15b-} cannot be directly applied but the fact that invariant boundary models exist is a good indication that those theorem can hold more generally.

The Burke property guarantees enough analytical tractability to classify this as an exactly solvable model of the KPZ class \cite{Cor-12}. 
Several well-studied models of last passage percolation and directed polymers  exhibit this characteristic. 
Aside from the ones already mentioned above, there are also the continuum directed polymer studied in \cite{Am-11}, the log-gamma polymer introduced in  \cite{Sep-12-corr}, the polymer in a Brownian environment with continuous-time random walk paths, discovered in  \cite{OCo-Yor-01}, subsequently worked on by \cite{Mor-OCo-07, OCo-12,Sep-Val-10}, the strict-weak gamma polymer studied in \cite{corwin2015strict} and \cite{O-15} and the random walk in Beta-distributed random potential \cite{Bar-Cor-15-}. The exactly solvable planar polymer models with two admissible steps were recently classified in \cite{Ch-No-18}. Exactly solvable models which present environment inhomogeneity are for the corner growth model  \cite{Emr-16} and for totally asymmetric particle systems associated to growth models \cite{Bo-Pe-18, Kn-Pe-Sa-18}.

\subsection{Large deviations}
Large deviations rate functions for last passage times (for LPP)  and partition functions (for directed polymers) have been computed in several cases when the model is exactly solvable. Below $G$ stands for a generic last passage time random variable. Define the upper (or right) and lower (or left) tale for the rate function as 
\begin{align*}
\lim_{n\to\infty}-n^{-1}\log\P\{G_{n,n}\geq rn\}=J_u(r), \quad \lim_{n\to\infty}-n^{-2}\log\P\{G_{n,n}\leq rn\}=J_\ell(r),
\end{align*}
A priori the existence of the limits is not even guaranteed, and it depends for example on the potential $V$  and the environment $\om$ among other things.
The existence of $J_u(r)$ and $J_\ell(r)$ was proved for the exponential and geometric corner growth model in $\Z^2$ \cite{Jo-00}. 
An earlier work where the right-tail rate function is explicitly computed appeared in \cite{Sep-98-mprf-2}. Existence of the rate functions was also known in the case of of the Hammersley process.  Its fluctuations in the large deviations regime were studied in \cite{De-99}, obtaining also precise results for the upper and lower exponential tails.  An explicit right-tail rate function was computed in \cite{SeLarge-98}, using the invariant distributions for the particle system and studying deviations for the tagged particle. 
In the framework of particles systems, functional large deviation principle for Totally Asymmetric Simple Exclusion Process (TASEP), which is closely connected to Exponential LPP, was obtained, for the $n$-speed tail in \cite{Va-04,Je-phd-00} and for the $n^2$-speed tail recently in \cite{Ol-17}. 

Using the invariance structure offered by Burke's property, a right-tail large deviation rate function with speed $n$ for the partition function in the log-gamma polymer was proven \cite{Geo-Sep-13-}. Large deviations and KPZ fluctuations were computed for a random walk in a dynamic i.i.d.\ beta random environment in \cite{Ba-Se-18}. The idea of \cite{Geo-Sep-13-} was later extended for the free energy in the O'Connell-Yor polymer by \cite{Ja-15}, which is also a model with asymmetry like the one in this article. We utilise this techniques here, and we are also able to prove explicit limiting log-moment generating functions. 
 
The approach for the existence of the right tail rate function is probabilistic in nature and utilizes super-additivity and the explicit expression is computed using probabilistic arguments. In general, the speed $n^2$ and the existence of lower-tail rate functions remained elusive, including for non-solvable models of last passage percolation, if one was to use only probabilistic techniques. In \cite{Kes-86-aspects} it was shown under a boundedness condition on the environment that the $n^2$ speed was correct, but with no existence of the rate function results. This was for first passage percolation models (FPP). FPP and  LPP have the same qualitative behavior with the role of upper and lower tails reversed, an artefact of sub-additivity vs super-additivity. Existence of the $n^2$ speed rate function is proven in \cite{Ba-17} and the result is expected to extend for LPP with the same probabilistic approach. A variant of this result was earlier proved in \cite{Ch-03} for line-to-line first passage time.

\subsection{Structure of the paper}
The paper is organised as follows:
 In Section \ref{sec:model3} we discuss the Burke property and the invariant version of the model. 
 
 In Section \ref{sec:fullLDP} we prove a full large deviation principle (LDP) for $G_{\fl{Ns}, \fl{Nt}}$ at speed $n$. General properties of the rate function are also proven, including that its Legendre dual is the limiting logarithmic moment generating function (l.m.g.f.) of $G_{\fl{Ns}, \fl{Nt}}$ via Varadhan's lemma. Existence of the full LDP is a direct consequence of the existence of a right-tail rate function. We prove an explicit variational formula for the right-tail rate function and its Legendre dual, that we ten proceed to explicitly solve and obtain a closed formula in Section \ref{sec:ldpwb}. Finally, in Section \ref{sec:immgf} we prove an explicit expression of the limiting l.m.g.f.\ for the invariant boundary model. 
 
 The remaining part of the article are the Appendices, where we present variations of known results, that we needed tailored to our model. The last Appendix is a long computation needed in computing the explicit formulas. The goal was to make this article self-sufficient.

\subsection{Commonly used notation}
Throughout the paper, $\N$ denotes the natural numbers, and $\Z_+$ the non-negative integers. Symbol $G$ is always denoting a last passage time.  As we already mentioned, the superscript $V$ will be omitted as there is no confusion on the potential; in our case we always use \eqref{eq:potential}. Letter $\pi$ signifies a generic admissible path.

Bold-face letters (e.g.\ $\bf v$) indicate two-dimensional vectors (e.g.\ ${\bf w} = (w_1, w_2)$). In the rare cases where we write   ${\bf v} \le {\bf w}$ we mean the inequality holds coordinate-wise. 

The Legendre (convex) dual of a function $f :\R\to (-\infty,\infty]$ is defined $f^*(y) =\sup_{x\in\R}\{xy-f (x)\}$. The statement $f = f^{**}$ is used throughout the article without any special mention, and it is true if and only if $f$ is convex and lower semicontinuous, which is why we pay particular attention into having the rate function lower-semicontinuous at the boundaries of their set that they are finite. Finally, in two occasions we need the infimal convolution of two generalised convex functions $f, g$, and we write 
\[
f \Box g(r) = \inf_{x \in \R}\{ f(x) + g(r-x)\}.
\]
The important fact is that $(f\Box g)^* = f^*+g^*$.  We refer to \cite{Roc-70} for the necessary convex analysis.

\section{The invariant model}
\label{sec:model3} 

The boundary model is constructed defining different distribution of the weights on the two axes and they depends on a parameter $u \in (p,1]$. We can freely choose the value of this parameter according to its domain and different $u$ values defines different boundary distributions. The weight at the origin remains unchanged respect to the original model and it is set to $\om_0=0$. On the horizontal axis, for any $k \in \N$ we set the weights
$
\om_{ke_1} \sim \text{Bernoulli}( u ), 
$
with independent marginals 
\be \label{eq:xaxis}
\P\{ \om_{ke_1} = 1\} = u= 1 - \P\{ \om_{ke_1} = 0\}.
\ee
On the vertical axis, for any $k \in \N$, we set 
$\om_{ke_2} \sim \text{Geometric}\big( \frac{u-p}{u(1-p)} \big)$ with independent marginals 
\be\label{eq:yaxis} 
\P\{ \om_{ke_2} = \ell \} 
=\frac{u-p}{u(1-p)}\Big(\frac{p(1-u)}{u(1-p)}\Big)^{\ell}  , \quad \ell \in \Z_+.
\ee
We do not alter the weights in the bulk $\{ \om_w\}_{w \in \N^2}$. Therefore they have i.i.d.\ Ber($p$) marginal distributions. 
We use the superscript $\om^{(u)}$ to highlight the fact that the distributions on the two axes are different than the ones in the bulk and they depend on $u$. To sum up, for any $i \ge 1, j \ge 1$, the $\om^{(u)}$ marginals are independent with marginals
\be \label{eq:omu}
\om^{(u)}_{i,j} \sim
\begin{cases}
\text{Ber}(p), &\text{ if } (i,j) \in \N^2,\\
\text{Ber}(u), &\text{ if }  i \in \N, j =0,\\
\text{Geom}\Big(\frac{u-p}{u(1-p)}\Big), &\text{ if }  i =0, j \in \N, \\
\delta_0, &\text{ if }  i =0, j =0.
\end{cases}
\ee
If we consider any path $\pi$ starting from $0$, we observe from the previous display that the invariant model allows for the possibility to $\pi$ to collect different weights along the two axes. In particular, if the path moves horizontally before entering the bulk,  then it collects the Bernoulli($u$) weights until it takes the first vertical step, and after that, it collects weight according to the potential function  \eqref{eq:potential}. If $\pi$ moves vertically from $0$ then it will collect the geometric weights on the vertical axis, and after it enters the bulk, it again collects according to $V$. This is the only difference from the potential $V$ of the i.i.d.\ model, namely while on the $y$-axis, the path can still collect positive weight. 

Given a parameter $u \in (p, 1]$ we define the last passage time for the invariant model from 0 to $w$ as
\begin{align}
G^{(u)}_{0, w} &= \max_{1\le k \le w\cdot e_1} \Big\{ \sum_{i=1}^k \om_{i e_1} + G_{ke_1+e_2, w} \Big\} \nonumber\\
&\phantom{xxxxxxxxxxxxx}\bigvee \max_{1\le k \le w\cdot e_2} \Big\{ \sum_{j=1}^k \om_{je_2}  + \om_{e_1+ke_2}+G_{e_1+ke_2, w} \Big\}. \label{eq:varform}
\end{align}

This formula comes from the variational equality using the above description. An explicit formula for the shape function for this model is given by the following theorem.

  \begin{theorem}\label{thm:LLNG}[Law of large numbers for $G^{(u)}_{\fl{Ns}, \fl{Nt}}$]
	For fixed parameter $p< u \le 1$ and $(s,t) \in \R^2_+$ we have 
	\be
	g_{pp}^{(u)}(s,t)=\lim_{N\to \infty} \frac{G^{(u)}_{\fl{Ns}, \fl{Nt}}}{N} = su + t\, \frac{p(1-u)}{ u - p } , \quad  \P -a.s.
	\ee 
\end{theorem}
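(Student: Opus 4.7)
The plan is to analyse separately the two maxima in the variational identity \eqref{eq:varform}, extract their almost sure asymptotics from Theorem \ref{thm:LLNp} together with the strong law of large numbers for i.i.d.\ sums, and then solve the resulting deterministic two-fold supremum. Writing $S^h_k=\sum_{i=1}^k\om^{(u)}_{ie_1}$ and $S^v_k=\sum_{j=1}^k\om^{(u)}_{je_2}$, the distributions in \eqref{eq:omu} give $\E[\om^{(u)}_{e_1}]=u$ and $\E[\om^{(u)}_{e_2}]=c$, where $c:=p(1-u)/(u-p)$ is the mean of the geometric weights. Since the boundary and bulk weights are independent, for any fixed $\xi\in[0,s]$ and $\zeta\in[0,t]$ the strong law combined with a shift of Theorem \ref{thm:LLNp} yields the almost sure limits
\[
\tfrac{1}{N}\bigl[S^h_{\fl{N\xi}}+G_{\fl{N\xi}e_1+e_2,\,\fl{Ns}e_1+\fl{Nt}e_2}\bigr]\longrightarrow u\xi+g_{pp}(s-\xi,t),
\]
and analogously $c\zeta+g_{pp}(s,t-\zeta)$ for the vertical choice (the single bulk weight $\om_{e_1+ke_2}$ appearing in \eqref{eq:varform} contributes $O(1)/N\to 0$).

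For the lower bound one substitutes any fixed $\xi$ or $\zeta$ into \eqref{eq:varform}, passes to the limit, and takes the supremum over $\xi\in[0,s]$ and $\zeta\in[0,t]$ on the right. The matching upper bound comes from an $\e$-discretisation of $[0,s]$ and $[0,t]$: on each mesh cell the maximand in \eqref{eq:varform} is dominated by a cross-term at the mesh endpoints (using monotonicity of the partial sums in $k$ and the fact that moving the starting point of $G$ further along the axis only decreases its value), the joint LLN is applied on the finite set of mesh points, and $\e\downarrow 0$ is taken using the continuity of $g_{pp}$ on $\R^2_+$. This produces
\[
\lim_{N\to\infty}\frac{G^{(u)}_{\fl{Ns},\fl{Nt}}}{N}=\sup_{\xi\in[0,s]}\bigl\{u\xi+g_{pp}(s-\xi,t)\bigr\}\vee\sup_{\zeta\in[0,t]}\bigl\{c\zeta+g_{pp}(s,t-\zeta)\bigr\}\quad\bP\text{-a.s.}
\]

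It then remains to show that the right-hand side equals $us+ct$. Differentiating on the strictly concave branch $\{t<(s-\xi)(1-p)/p\}$ of $g_{pp}$, the horizontal supremum has critical point $\xi^\ast=s-p(1-p)t/(u-p)^2$; substituting back and simplifying $\bigl(\sqrt{p(s-\xi^\ast)}+\sqrt{(1-p)t}\bigr)^2$ collapses the expression precisely to $us+ct$. This critical point lies in $[0,s]$ exactly when $s/t\ge p(1-p)/(u-p)^2$, and since $(u-p)^2\le(1-p)^2$ it then automatically sits in the concave branch. A symmetric computation gives the vertical interior maximiser $\zeta^\ast=t-s(u-p)^2/(p(1-p))$, again with value $us+ct$, admissible when $t/s\ge (u-p)^2/(p(1-p))$. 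The two admissibility regions cover all of $\R^2_+$, so in every direction at least one supremum attains $us+ct$; a short direct check (using the flat-edge formula $g_{pp}(s,t)=s$ on $t\ge s(1-p)/p$ and concavity on the other branch) shows that $us+ct$ also dominates the competing supremum.

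The main technical obstacle is the interchange of limit and supremum when passing from the random maxima in \eqref{eq:varform} to their deterministic envelope, which the discretisation-and-monotonicity argument handles; the remaining variational calculation is elementary but requires careful tracking of the two branches of $g_{pp}$ and of the feasibility regions of $\xi^\ast$ and $\zeta^\ast$.
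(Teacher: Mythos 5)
Your proposal is correct in substance, but it takes a genuinely different route from the paper. The paper's own proof is much shorter and does not use the explicit formula \eqref{eq:busopt} at all: it writes, via the increments \eqref{eq:gradients}, $G^{(u)}_{\fl{Ns},\fl{Nt}}=\sum_{i=1}^{\fl{Ns}}I^{(u)}_{i,0}+\sum_{j=1}^{\fl{Nt}}J^{(u)}_{\fl{Ns},j}$, invokes Burke's property (Corollary \ref{cor:downrightpath}) to see that the second sum is a sum of i.i.d.\ Geometric variables along the down-right path through $(\fl{Ns},\fl{Nt})$, and handles the fact that this sum changes with $N$ by a large-deviation estimate plus Borel--Cantelli. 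You instead start from the boundary variational identity \eqref{eq:varform}, feed in the cited shape theorem (Theorem \ref{thm:LLNp}) together with the strong law for the boundary sums, interchange limit and supremum by coarse graining, and then solve the resulting two-branch optimization. This is essentially the machinery the paper deploys \emph{after} Theorem \ref{thm:LLNG}, in Theorem \ref{thm:HOR} and in the proof of \eqref{eq:maxpath}; your optimization of $a\mapsto au+g_{pp}(s-a,t)$ and its vertical analogue reproduces the computation in the proof of Theorem \ref{thm:HOR}, and your observation that the two feasibility regions $t\lessgtr s(u-p)^2/(p(1-p))$ cover $\R^2_+$ is exactly why the maximum of the two branches is always $su+t\,p(1-u)/(u-p)$. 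What your route buys is that the characteristic direction appears explicitly and Theorem \ref{thm:HOR} comes out as a byproduct; what it costs is self-containedness: it leans on the externally proven Theorem \ref{thm:LLNp}, so it could not be combined with the paper's later re-derivation of $g_{pp}$ from the invariant model (Theorem \ref{thm:llnp}) without circularity, whereas the paper's Burke-property proof feeds into that re-derivation.

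One technical point needs patching. You assert that ``the strong law combined with a shift of Theorem \ref{thm:LLNp}'' gives the almost sure limits of $N^{-1}G_{(\fl{N\xi},1),(\fl{Ns},\fl{Nt})}$. Theorem \ref{thm:LLNp} is a fixed-origin statement; the shifted passage times have the right distribution but their starting point moves with $N$, so equality in distribution only yields convergence in probability. In the paper's Appendix this is circumvented by passing to a.s.\ convergent subsequences, which is legitimate there because the existence of the limit of $N^{-1}G^{(u)}_{\fl{Ns},\fl{Nt}}$ is already known; in your argument that existence is precisely what is being proven, so you need full-sequence $\varliminf$ and $\varlimsup$ bounds for each (finitely many) mesh term. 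These follow from exponential concentration or speed-$N$ tail bounds for the i.i.d.\ passage time (e.g.\ the concentration inequality from \cite{Com-Yos-11-} used in Appendix \ref{app:prop}, or Theorem \ref{JInt} for the right tail and Lemma \ref{lem:pow} for the left) together with Borel--Cantelli, exactly as the paper does for the moving column sum in its own proof of Theorem \ref{thm:LLNG}. With that insertion your proof is complete.
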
 

It is convenient to introduce to passage times, depending on the first step of the set of paths we are optimizing over. Define 
\be\label{lpthor}
G^{(u),\text{hor}}_{\fl{Ns},\fl{Nt}}=\max_{1\leq k\leq \fl{Ns}}\Big\{\sum_{i=1}^k \om_{i,0}+G_{(k,1),(\fl{Ns},\fl{Nt})}\Big\}
\ee
and 
\be\label{lptver}
G^{(u),\text{ver}}_{\fl{Ns},\fl{Nt}}=\max_{1\leq \ell\leq \fl{Nt}}\Big\{\sum_{j=1}^\ell \om_{0,j} + \om_{1,\ell}+G_{(1,\ell),(\fl{Ns},\fl{Nt})}\Big\}.
\ee
Then, by \eqref{eq:varform} 
\be\label{eq:gvg}
G^{(u)}_{\fl{Ns},\fl{Nt}}=G^{(u),\text{hor}}_{\fl{Ns},\fl{Nt}}\vee G^{(u),\text{ver}}_{\fl{Ns},\fl{Nt}}.
\ee
Passage times \eqref{lpthor} and \eqref{lptver} satisfy a law of large numbers as well, given in the next 
\begin{theorem} \label{thm:HOR}
Let $s,t\geq0$, $u\in(p,1]$.
\begin{enumerate}[(a)]
\item The following limit exists and is given by
\be \label{limgppu}
g_{pp}^{(u),\text{hor}}(s,t)= \lim_{N\to\infty}N^{-1}G^{(u),\text{hor}}_{\fl{Ns},\fl{Nt}}=
\begin{cases}
g_{pp}^{(u)}(s,t)\qquad&\text{if }t<s\frac{(u-p)^2}{p(1-p)},\\
g_{pp}(s,t)\qquad&\text{if }t\geq s\frac{(u-p)^2}{p(1-p)}.
\end{cases}
\ee
\item The following limit exists and is given by
\be \label{limgppv}
g_{pp}^{(u),\text{ver}}(s,t)= \lim_{N\to\infty}N^{-1}G^{(u),\text{ver}}_{\fl{Ns},\fl{Nt}}=
\begin{cases}
g_{pp}^{(u)}(s,t)\qquad&\text{if }t>s\frac{(u-p)^2}{p(1-p)},\\
g_{pp}(s,t)\qquad&\text{if }t\leq s\frac{(u-p)^2}{p(1-p)}.
\end{cases}
\ee
\end{enumerate}
\end{theorem}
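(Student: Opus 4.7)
The plan is to apply the variational representations \eqref{lpthor} and \eqref{lptver} directly, reducing each limit to a deterministic one-parameter optimization over the fraction of steps spent on the boundary, and then to evaluate that optimization in closed form using the bulk shape function of Theorem \ref{thm:LLNp}. I will sketch part (a) in detail; part (b) is entirely analogous with the Bernoulli axis replaced by the Geometric axis. Set $S_k = \sum_{i=1}^k \om_{i,0}$. For each fixed $\alpha \in [0, s]$, the strong law of large numbers gives $N^{-1} S_{\fl{N\alpha}} \to \alpha u$ a.s., while Theorem \ref{thm:LLNp} applied to the translated i.i.d.\ bulk environment (which is shift-invariant) gives $N^{-1} G_{(\fl{N\alpha}, 1), (\fl{Ns}, \fl{Nt})} \to g_{pp}(s - \alpha, t)$ a.s. Consequently $N^{-1}(S_{\fl{N\alpha}} + G_{(\fl{N\alpha}, 1), (\fl{Ns}, \fl{Nt})}) \to \Psi(\alpha) := \alpha u + g_{pp}(s - \alpha, t)$, a continuous concave function on $[0, s]$.

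Next I need to interchange the supremum over $k$ in \eqref{lpthor} with the limit, yielding
\[
\lim_N N^{-1} G^{(u), \text{hor}}_{\fl{Ns}, \fl{Nt}} = \sup_{\alpha \in [0, s]} \Psi(\alpha) \quad \text{a.s.}
\]
The lower bound follows by restricting the max to $k = \fl{N\alpha}$ for $\alpha$ in a countable dense subset of $[0, s]$ and using continuity of $\Psi$. For the upper bound I partition $[0, s]$ into subintervals of mesh $\delta$ with endpoints $0 = \alpha_0 < \cdots < \alpha_M = s$. On each subinterval $[\alpha_{j-1}, \alpha_j]$ I exploit two monotonicities: $S_k$ is non-decreasing in $k$ because $\om \ge 0$, and $G_{(k, 1), (\fl{Ns}, \fl{Nt})}$ is non-increasing in $k$ since any path from $(k+1, 1)$ can be extended backwards by one horizontal step collecting a nonnegative weight. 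Applying the pointwise convergence at the grid points and sending $\delta \to 0$ by uniform continuity of $\Psi$ matches the lower bound.

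It remains to maximize $\Psi$ explicitly, splitting according to the phase of $g_{pp}(s - \alpha, t)$. On the flat region $t \ge (s - \alpha)(1-p)/p$, $\Psi(\alpha) = s + \alpha(u - 1)$ is non-increasing since $u \le 1$, so its max there is attained at the left endpoint. On the curved region,
\[
\Psi'(\alpha) = (u - p) - \sqrt{\frac{p(1-p)\, t}{s - \alpha}},
\]
which is decreasing in $\alpha$ with unique zero $\alpha^\ast = s - p(1-p) t / (u - p)^2$. If $t < s(u - p)^2 / (p(1-p))$ then $\alpha^\ast \in (0, s)$ and a direct (tedious but mechanical) substitution yields $\Psi(\alpha^\ast) = s u + t\, p(1-u)/(u - p) = g^{(u)}_{pp}(s, t)$. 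If $t \ge s(u - p)^2 / (p(1-p))$, then $\Psi'(0) \le 0$, so $\Psi$ is non-increasing on $[0, s]$ and its supremum is $\Psi(0) = g_{pp}(s, t)$. This proves (a).

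For part (b), the same scheme applies with $\beta = \ell/N$ and the Geometric axis. The bounded extra weight $\om_{1, \ell}$ is negligible after normalization, while $N^{-1} \sum_{j=1}^{\fl{N\beta}} \om_{0, j} \to \beta\, p(1-u)/(u-p)$ by the SLLN, this being the mean of $\text{Geom}((u-p)/(u(1-p)))$. Maximizing $\beta\, p(1-u)/(u-p) + g_{pp}(s, t - \beta)$ on $[0, t]$ gives an interior critical point $\beta^\ast = t - s(u-p)^2/(p(1-p))$ which lies in $(0, t)$ exactly when $t > s(u-p)^2/(p(1-p))$ and yields the value $g^{(u)}_{pp}(s, t)$; otherwise the maximum is at $\beta = 0$ and equals $g_{pp}(s, t)$. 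The main obstacle throughout is the supremum-limit interchange of Step 2: this is a routine partition-and-monotonicity argument, enabled by boundedness (Bernoulli in (a)) or sub-exponential tails (Geometric in (b)) of the boundary weights together with concavity and continuity of $\Psi$; the subsequent closed-form computation of $\sup \Psi$ is lengthy but purely deterministic.
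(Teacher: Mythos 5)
Your proposal is correct and follows essentially the same route as the paper: reduce \eqref{lpthor}/\eqref{lptver} via a partition/coarse-graining interchange of limit and maximum to the deterministic optimization $\sup_{0\le a\le s}\{au+g_{pp}(s-a,t)\}$, then solve it by splitting into the flat and strictly concave phases of $g_{pp}$, finding the same critical point $a^*=s-\frac{p(1-p)t}{(u-p)^2}$ (resp.\ $\beta^*=t-\frac{s(u-p)^2}{p(1-p)}$) and threshold $t=s\frac{(u-p)^2}{p(1-p)}$. The only point treated slightly loosely, as in the paper itself, is the convergence of the bulk term $N^{-1}G_{(\fl{N\alpha},1),(\fl{Ns},\fl{Nt})}$ whose starting point moves with $N$, which is handled there by convergence in probability plus a subsequence (or a concentration estimate) rather than a direct appeal to shift invariance.
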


As is usual in the exactly solvable models of last passage percolation, there is the notion of a \textit{characteristic direction}. In this case, for the model with boundaries for a given boundary parameter $u\in(p,1]$, there exists a unique direction $(m(N), n(N))$ whose scaled direction, as $N \to \infty$, converges to the macroscopic characteristic direction 
\be\label{eq:chardirmacro}
N^{-1}(m_{u}(N), n_{u}(N)) \to \Big( 1,\frac{(u-p)^2}{p(1-p)}\Big),
\ee
which gives that for large enough $N$ the endpoint $(m(N), n(N))$ is always below the critical line $y = \frac{1-p}{p}x$ that separates the flat edge from the strictly concave part of $g_{pp}(s,t)$ in Theorem \ref{thm:LLNp}. Here the characteristic direction already manifested in Theorem \ref{thm:HOR}  as the cutting line  between feeling the boundary effect  versus entering the bulk. 

We will prove Theorems \ref{thm:LLNG} and \ref{thm:HOR} at the end of this section.

To simplify the notation in what follows, set $w = (i,j) \in \Z_+^2$ and define the last passage time gradients by 
\be\label{eq:gradients}
I^{(u)}_{i+1,j} = G^{(u)}_{0, (i+1,j)}  - G^{(u)}_{0, (i,j)}, \quad \text{and} \quad J^{(u)}_{i,j+1} =  G^{(u)}_{0, (i,j+1)}  - G^{(u)}_{0, (i,j)}.
\ee
When there is no confusion we will drop the superscript $(u)$ from the $I, J$ notation. When $j = 0$ we have that $\{I^{(u)}_{i,0}\}_{i\in \N}$ are i.i.d.\ Bernoulli($u$) random variables since $I^{(u)}_{i,0} = \om_{i,0}$. When $i = 0$,   $\{J^{(u)}_{0,j}\}_{j \in \N}$ are i.i.d.\ Geometric$(\frac{u-p}{u(1-p)})$ random variables. There are three recursive equations satisfied in this model, which we summarize below.

\begin{lemma}
Let $u \in(p,1]$ and $(i, j) \in \N^2$. Then the last passage time can be recursively computed as 
\be\label{eq:LPPrec}
G^{(u)}_{0, (i, j)} = \max\big\{ G^{(u)}_{0, (i, j-1)}, \,\,G^{(u)}_{0, (i-1, j)} + \om_{i,j} \big\}.
\ee
Furthermore, the last passage time gradients satisfy the recursive equations 
\be\label{eq:4}
\begin{aligned}
I^{(u)}_{i,j}&=\max\{I^{(u)}_{i,j-1}-J^{(u)}_{i-1,j},\,\, \om_{i,j}\},\\
J^{(u)}_{i,j}&=(J^{(u)}_{i-1,j}-I^{(u)}_{i,j-1}+\om_{i,j})^+.
\end{aligned}
\ee
\end{lemma}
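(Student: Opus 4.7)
The plan is to prove both statements by a direct decomposition of the optimal path over its last step, exploiting that the target site $(i,j)\in\N^2$ lies in the bulk. Since the potential \eqref{eq:potential} satisfies $V(T_{v_i}\omega, e_1) = \omega_{v_i+e_1}$ and $V(T_{v_i}\omega, e_2) = 0$, a horizontal step landing at $(i,j)$ contributes the bulk Bernoulli$(p)$ weight $\omega_{i,j}$, while a vertical step landing at $(i,j)$ contributes nothing.

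First I would establish \eqref{eq:LPPrec}. Any admissible path $\pi$ from $0$ to $(i,j)$ has its penultimate vertex either at $(i-1,j)$ (final step $e_1$, collecting $\omega_{i,j}$) or at $(i,j-1)$ (final step $e_2$, collecting $0$). Splitting the maximum in the definition of $G^{(u)}_{0,(i,j)}$ over these two options and optimizing separately over the subpath from $0$ to the chosen penultimate vertex gives
\[
G^{(u)}_{0,(i,j)} \;=\; \max\bigl\{\,G^{(u)}_{0,(i,j-1)},\; G^{(u)}_{0,(i-1,j)}+\omega_{i,j}\,\bigr\}.
\]
This argument is oblivious to whether $(i-1,j)$ or $(i,j-1)$ is on an axis or in the bulk, because the boundary weights are already encoded inside $G^{(u)}_{0,(i-1,j)}$ and $G^{(u)}_{0,(i,j-1)}$, and the contribution of the final step depends only on its type and on $\omega_{i,j}$.

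Next I would derive \eqref{eq:4}. Subtracting $G^{(u)}_{0,(i-1,j)}$ from \eqref{eq:LPPrec} and using the telescoping identities $G^{(u)}_{0,(i,j-1)}-G^{(u)}_{0,(i-1,j-1)} = I^{(u)}_{i,j-1}$ and $G^{(u)}_{0,(i-1,j)}-G^{(u)}_{0,(i-1,j-1)} = J^{(u)}_{i-1,j}$ gives the first line of \eqref{eq:4}. For the second line, subtract $G^{(u)}_{0,(i,j-1)}$ from \eqref{eq:LPPrec} instead; the left-hand side becomes $J^{(u)}_{i,j}$, the first branch of the max becomes $0$, and the second branch becomes $J^{(u)}_{i-1,j}-I^{(u)}_{i,j-1}+\omega_{i,j}$, so using $\max\{0,x\}=x^+$ yields $J^{(u)}_{i,j}=(J^{(u)}_{i-1,j}-I^{(u)}_{i,j-1}+\omega_{i,j})^+$.

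The proof is essentially mechanical and I do not expect any genuine obstacle; the only point that warrants a brief remark is that the recursion \eqref{eq:LPPrec} and the gradient recursions \eqref{eq:4} are uniform over the interior $(i,j)\in\N^2$ despite the inhomogeneous distributions \eqref{eq:omu} on the axes, because the boundary contributions have already been absorbed into $G^{(u)}_{0,(i-1,j)}$ and $G^{(u)}_{0,(i,j-1)}$ via \eqref{eq:varform}.
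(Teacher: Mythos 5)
Your proposal is correct and follows essentially the same route as the paper: the recursion \eqref{eq:LPPrec} via the last-step decomposition (which the paper attributes to the model description), and the gradient recursions \eqref{eq:4} by subtracting $G^{(u)}_{0,(i-1,j)}$ or $G^{(u)}_{0,(i,j-1)}$ and telescoping through $G^{(u)}_{0,(i-1,j-1)}$, exactly as in the paper's computation for $J^{(u)}_{i,j}$. Your extra remark that the axis weights are already absorbed into the passage times to $(i-1,j)$ and $(i,j-1)$ is a slightly more explicit justification of the step the paper leaves implicit, but it is not a different argument.
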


\begin{proof}
Equation \eqref{eq:LPPrec} follows from the model description in the introduction. Note that the same recursive equation is actually satisfied for the model without boundaries. 
We indicatively prove  \eqref{eq:4} for the $J$ and the other one is obtained in a similar way. 
\begin{align*}
J^{(u)}_{i,j}&=G^{(u)}_{0,(i,j)}-G^{(u)}_{0, (i,j-1)}\\
&=\max\big\{G^{(u)}_{0, (i,j-1)}, G^{(u)}_{0,(i-1,j)}+\om_{i,j}\big\} - G^{(u)}_{0,(i,j-1)} \quad \text{ by \eqref{eq:LPPrec},}\\
&=\max\big\{G^{(u)}_{0, (i,j-1)}-G^{(u)}_{0, (i,j-1)},  G^{(u)}_{0,(i-1,j)}-G^{(u)}_{0, (i,j-1)}+\om_{i,j}\big\}\\
&=\max\big\{0,G^{(u)}_{0, (i-1,j)}-G^{(u)}_{0,(i,j-1)}+G^{(u)}_{0(i-1,j-1)}-G^{(u)}_{0(i-1,j-1)}+\om_{i,j}\big\}\\
&=(J^{(u)}_{i-1,j}-I^{(u)}_{i,j-1}+\om_{i,j})^+. \qedhere
\end{align*}
\end{proof}

Using the gradients \eqref{eq:4} and the environment $\{\om_{i,j}\}_{(i,j) \in \N^2}$ we also define new random variables $\alpha_{i,j}$ on $\Z_+^2$
\be
\label{eq:5}
\alpha_{i-1,j-1}=\min\{I^{(u)}_{i,j-1},J^{(u)}_{i-1,j}+\om_{i,j}\}\qquad \text{for }(i,j)\in\mathbb{N}^2.
\ee
Since the $I^{(u)}_{i,j}$ are Bernoulli, so are the $\alpha_{i,j}$. The following lemma gives the joint distribution of the vector
$(I^{(u)}_{i,j}, J^{(u)}_{i,j}, \alpha_{i-1,j-1})$. This is It is the analogue of Burke's property that is a common aspect of many solvable models.

\begin{lemma}[Burke's property]
\label{burke}
Let independent random variables be distributed by 
\be
(I^{(u)}_{i,j-1}, J^{(u)}_{i-1,j}, \om_{i,j}) \sim \Big( \textrm{Ber}(u), \text{Geom}\Big(\frac{u-p}{u(1-p)}\Big), \text{Ber}(p) \Big),
\ee
where we assume $u > p$.
Then, for $(i,j) \in \N^2$,  the vector obtained via equations \eqref{eq:4}, \eqref{eq:5} is a vector of mutually independent marginals, with the same distributions, i.e. 
\be
(I^{(u)}_{i,j}, J^{(u)}_{i,j}, \alpha_{i-1,j-1}) \sim \Big( \textrm{Ber}(u), \text{Geom}\Big(\frac{u-p}{u(1-p)}\Big), \text{Ber}(p) \Big).
\ee
\end{lemma}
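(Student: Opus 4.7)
The strategy is to directly compute the joint law of $(I',J',\alpha) := (I^{(u)}_{i,j}, J^{(u)}_{i,j}, \alpha_{i-1,j-1})$ by enumeration, treating it as a deterministic function of the input triple $(I,J,\omega) := (I^{(u)}_{i,j-1}, J^{(u)}_{i-1,j}, \omega_{i,j})$ via \eqref{eq:4}--\eqref{eq:5}. Since $I$ and $\omega$ are $\{0,1\}$-valued, $I'$ and $\alpha$ are also $\{0,1\}$-valued (for $I'$: when $J \ge 1$ one has $I-J \le 0$ and $I' = \omega$, while if $J = 0$ then $I' = \max\{I,\omega\} \in \{0,1\}$). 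I would first observe that when $I = 0$ the recursions collapse to
\[
I' = \omega, \qquad J' = J + \omega, \qquad \alpha = 0,
\]
and when $I = 1$ there are six elementary subcases obtained by crossing $J \in \{0\}, \{1\}, \{\ge 2\}$ with $\omega \in \{0,1\}$. For example $(1,0,1)$, $(1,1,0)$, $(1,J,0)$ with $J \ge 2$, and $(1,J,1)$ with $J \ge 2$ produce $(1,0,1)$, $(0,0,1)$, $(0,J-1,1)$, and $(1,J,1)$ respectively.

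Tabulating the preimage of every admissible output, I can write $\mathbb{P}(I' = i', J' = j', \alpha = a)$ as a sum, typically a single term, of input probabilities. To compare this against the claimed product $u^{i'}(1-u)^{1-i'} \cdot q(1-q)^{j'} \cdot p^a(1-p)^{1-a}$ with $q := \tfrac{u-p}{u(1-p)}$, I would invoke repeatedly the identity
\[
u(1-p)(1-q) = p(1-u),
\]
equivalent to $1-q = \tfrac{p(1-u)}{u(1-p)}$. This is precisely what converts a geometric factor $(1-q)$ into the ``flip'' $p(1-u) \leftrightarrow u(1-p)$ of the two Bernoulli factors, and is used exactly once in each cell where the input and output disagree on $\alpha$ or $I'$. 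For instance $(I',J',\alpha) = (0,0,1)$ is produced only by $(I,J,\omega) = (1,1,0)$, and $uq(1-q)(1-p) = qp(1-u)$ is the desired product. Similarly $(1,j',0)$ with $j' \ge 1$ comes only from Case $I = 0$ with $\omega = 1$, $J = j'-1$, giving $(1-u)q(1-q)^{j'-1}p = uq(1-q)^{j'}(1-p)$.

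The main, and essentially the only, obstacle is careful bookkeeping: the geometric support forces a small case distinction on $j' \in \{0,1,\ge 2\}$ when collecting contributions to $(1,j',1)$ (from subcases $(J,\omega) = (0,1), (1,1), (\ge 2,1)$ respectively) and to $(0,j',1)$ (from $(1,0)$ and $(\ge 2,0)$). Once the table of preimages is filled in, the factorisation of the joint law as the stated product of a $\mathrm{Ber}(u)$, a $\mathrm{Geom}$, and a $\mathrm{Ber}(p)$ marginal follows cell-by-cell from the displayed identity. Independence and the correct marginals are then read off directly from the product form.
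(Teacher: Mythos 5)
Your proposal is correct: the recursions \eqref{eq:4}--\eqref{eq:5} do collapse to $(I',J',\alpha)=(\om,J+\om,0)$ when $I=0$, your six subcases for $I=1$ are the right ones, and the sample cells you compute check out (e.g.\ $(1,1,0)\mapsto(0,0,1)$ with $uq(1-q)(1-p)=qp(1-u)$, and $(0,j'-1,1)\mapsto(1,j',0)$ with $(1-u)pq(1-q)^{j'-1}=u(1-p)q(1-q)^{j'}$); the single identity $u(1-p)(1-q)=p(1-u)$, i.e.\ $1-q=\tfrac{p(1-u)}{u(1-p)}$, is indeed all that is needed to match every preimage cell to the product law. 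The route differs from the paper's: there, the marginals are computed directly (e.g.\ $\P\{\alpha=1\}=p$) and independence is then obtained in one stroke by showing that the joint Laplace transform $\E\bigl(e^{-x\tilde I-y\tilde J-z\alpha}\bigr)$ factors as $\E(e^{-x\tilde I})\,\E(e^{-y\tilde J})\,\E(e^{-z\alpha})$, the geometric series absorbing the sum over the values of $J$ compactly. Your cell-by-cell identification of the joint pmf with the product measure is more elementary --- no transforms, no series summation, and it yields marginals and independence simultaneously --- at the price of the case bookkeeping over $j'\in\{0\},\{1\},\{\ge 2\}$ that you flag; the paper's transform computation trades that bookkeeping for a slightly heavier algebraic manipulation but packages all values of $J$ at once. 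Both are complete proofs of the lemma.
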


We first present a series of key technical lemmas, and we encourage the reader 
familiar with these techniques to proceed to the proof of Proposition \ref{propJ}.

A down-right path $\psi$ on the lattice $\Z^2_+$ is an ordered  sequence of sites $\{ v_i \}_{i \in \Z}$ that satisfy 
\[
 v_i - v_{i-1} \in \{ e_1, - e_2 \}.
\] 
For a given down-right path $\psi$, define $\psi_i = v_i - v_{i-1}$ to be the $i$-th edge of the path and set  
\[
L_{\psi_i} = 
\begin{cases}
	I^{(u)}_{v_i}, & \text{if } \psi_i = e_1\\
	J^{(u)}_{v_{i-1}}, & \text{if } \psi_i = -e_2. 
\end{cases}
\]
Also define the interior sites  $\mathcal I _{\psi}$ of $\psi$ to be 
\[
\mathcal I _{\psi} = \{ w \in \Z^2_+: \exists\, v_i \in \psi \text{ s.t. } \,w < v_i \text{ coordinate-wise} \}. 
\] 
A convenient way to state Lemma \ref{burke} is the following. 
\begin{corollary} \label{cor:downrightpath}
	Fix a down-right path $\psi$. Then the random variables 
	\be\label{eq:dwrp}
	\{ \{\alpha_w\}_{w \in \mathcal I _{\psi}}, \{L_{\psi_i}\}_{i \in \Z} \}
	\ee
	are mutually independent, with marginals 
	\[
	\alpha_{w} \sim \text{Ber(p)}, \quad  L_{\psi_i} \sim 
\begin{cases}
	\text{Ber}(u), & \text{if } \psi_i = e_1\\
	\text{Geom}\Big( \frac{u - p}{ u(1-p) } \Big), & \text{if } \psi_i = -e_2. 
\end{cases}
	\]
\end{corollary}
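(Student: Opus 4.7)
The plan is to deduce the corollary from Lemma~\ref{burke} by a corner-flipping induction: starting from the axes path, where the claim is essentially tautological, I propagate the single-vertex Burke property through a finite sequence of local moves until I reach $\psi$. For the base case I take $\psi_0$ to be the axes path, going from $(0,\infty)$ down the $y$-axis to the origin and then out along the $x$-axis to $(\infty,0)$. Its interior is empty and each edge weight $L_{(\psi_0)_i}$ equals one of the boundary weights $\omega_{ke_1}$ or $\omega_{ke_2}$; by \eqref{eq:omu} these boundary weights are mutually independent with the required Bernoulli or Geometric marginals and are jointly independent of the bulk $\{\omega_w\}_{w\in\N^2}$. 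Hence the corollary holds for $\psi_0$, together with the stronger statement that all unused bulk weights remain independent of the named data.

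For the inductive step I flip one inward corner at a time. Suppose $\psi$ has a vertex $v=(a,b)$ at which $\psi$ enters along $-e_2$ and exits along $e_1$; replacing $v$ by $v'=v+e_1+e_2$ yields a new down-right path $\psi'$ with $\mathcal{I}_{\psi'}=\mathcal{I}_\psi\cup\{v\}$. The flip replaces the two edge weights $(I^{(u)}_{a+1,b},J^{(u)}_{a,b+1})$ attached to the old corner by $(I^{(u)}_{a+1,b+1},J^{(u)}_{a+1,b+1})$ at the new corner, and consumes the bulk weight $\omega_{a+1,b+1}$ in producing the newly created interior variable $\alpha_{a,b}$. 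By \eqref{eq:4} and \eqref{eq:5} this substitution is exactly the deterministic local map appearing in Lemma~\ref{burke}, applied to the input triple $(I^{(u)}_{a+1,b},J^{(u)}_{a,b+1},\omega_{a+1,b+1})$. The induction hypothesis guarantees that this triple has the prescribed product law and is independent of the rest of the configuration, so Lemma~\ref{burke} delivers the output triple with the same product law, and the standard fact that a measurable function of independent inputs is independent of any jointly independent extras extends the product structure to the whole configuration attached to $\psi'$.

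To conclude, any down-right $\psi$ agrees with $\psi_0$ outside a finite window and, inside the window, can be reached from $\psi_0$ by finitely many such flips, each adding one site to the interior; since joint distributions are determined by their finite-dimensional marginals, finitely many flips suffice. I expect the main obstacle to be the bookkeeping in the inductive step: one has to maintain, as part of the induction hypothesis, independence of the Burke input triple from every bulk $\omega$ that has not yet been consumed, so that the configuration after the flip still has the product form needed for the next flip. The geometric fact that makes this work is that the consumed site $(a+1,b+1)$ lies on $\psi'$ afterward and can never again appear as the NE corner of a later inward flip, so each bulk weight is absorbed at most once along the entire sequence of moves.
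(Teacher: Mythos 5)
Your argument is, in substance, the paper's own proof (Appendix B): the base case is the path formed by the two axes, where the claim reduces to the independence and marginals in \eqref{eq:omu}; each west--south corner flip consumes one bulk weight $\om_{a+1,b+1}$, replaces the edge variables $(I^{(u)}_{a+1,b},J^{(u)}_{a,b+1})$ by $(I^{(u)}_{a+1,b+1},J^{(u)}_{a+1,b+1})$ through \eqref{eq:4}, creates the new interior variable $\alpha_{a,b}$ through \eqref{eq:5}, and Lemma \ref{burke}, combined with the maintained joint independence from all not-yet-consumed bulk weights, propagates the product structure. Your index bookkeeping is right (it in fact corrects a typo in the paper, where the newly created interior variable is written as $\alpha_{i+1,j+1}$ rather than $\alpha_{i,j}$), and your observation that a flipped corner joins the interior and can never again serve as a corner, so each bulk weight is absorbed at most once, is exactly the reason the induction hypothesis survives.

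The one step that does not hold as stated is the final globalization: an arbitrary down-right path need not agree with $\psi_0$ outside a finite window. By the paper's definition $\psi=\{v_i\}_{i\in\Z}$ is bi-infinite in $\Z^2_+$, and monotonicity of the coordinates forces it to consist of a vertical ray up some column $i_0$, a finite middle portion, and a horizontal ray along some row $j_0$; if $i_0>0$ or $j_0>0$ (for instance the path descending the $y$-axis and then running forever along the row $j=5$) it never coincides with the axes, its interior $\mathcal I_\psi$ is infinite, and it cannot be reached from $\psi_0$ by finitely many flips. The repair is the one you gesture at with finite-dimensional marginals, and it is what the paper's closing paragraph does: mutual independence only needs to be verified for finite subcollections of \eqref{eq:dwrp}; any such subcollection lies inside a large square, and one can choose a down-right path that coincides with $\psi$ inside that square and is deformed outside it so as to connect the two axes. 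The flip induction applies to this auxiliary path, and the chosen variables of $\psi$ are among its variables, which yields the statement for general $\psi$. With that substitution for your last sentence, the proof is complete and matches the paper's.
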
 

\begin{theorem}[Variational formula for the LLN of the non boundary model]
\label{thm:llnp}
Fix $p$ in $(0,1)$ and $(s,t) \in \R^2_+$. Then we have the explicit law of large numbers limit
\[
g_{pp}(s,t) = \inf_{p < u \le 1} \{ s\E(I^{(u)}) + t \E(J^{(u)})\} = \inf_{p < u \le 1} g_{pp}^{(u)}(s,t).   
\]

\end{theorem}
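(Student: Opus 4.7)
The plan is to establish the identity in two steps: first an inequality $g_{pp}(s,t)\le g_{pp}^{(u)}(s,t)$ for every admissible $u$, obtained by a monotone coupling of the two environments, and then a direct minimization of the one-parameter family $u\mapsto g_{pp}^{(u)}(s,t)$, whose minimum value will turn out to coincide with the formula already given in Theorem \ref{thm:LLNp}. Note that the second equality in the statement is trivial once we observe that $\E(I^{(u)})=u$ and, from the geometric distribution in \eqref{eq:yaxis}, $\E(J^{(u)})=p(1-u)/(u-p)$, so that $s\E(I^{(u)})+t\E(J^{(u)})=g_{pp}^{(u)}(s,t)$ by Theorem \ref{thm:LLNG}.

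For the coupling step I would realize both the i.i.d.\ and the boundary environments on a common probability space so that the bulk weights $\{\om_{i,j}\}_{(i,j)\in\N^2}$ are literally identical Ber$(p)$ variables. On the $x$-axis, since $u>p$, I can couple the Ber$(u)$ boundary weights so that $\om^{(u)}_{i,0}\ge\om_{i,0}$ almost surely. On the $y$-axis the potential \eqref{eq:potential} assigns zero to every vertical increment in the i.i.d.\ model, whereas the geometric boundary weights are nonnegative; hence along the $y$-axis the boundary contribution dominates trivially. Consequently every admissible path collects at least as much weight in the boundary model, giving $G_{0,(m,n)}\le G^{(u)}_{0,(m,n)}$ pointwise. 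Dividing by $N$ along $(m,n)=(\fl{Ns},\fl{Nt})$ and applying Theorems \ref{thm:LLNp} and \ref{thm:LLNG} yields $g_{pp}(s,t)\le g_{pp}^{(u)}(s,t)$ for each $u\in(p,1]$, and hence $g_{pp}(s,t)\le \inf_{u\in(p,1]}g_{pp}^{(u)}(s,t)$.

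For the matching bound I would minimize the explicit expression
\[
f(u):=g_{pp}^{(u)}(s,t)=su+t\,\frac{p(1-u)}{u-p},\qquad u\in(p,1].
\]
Differentiating gives $f'(u)=s-tp(1-p)/(u-p)^2$, whose unique zero in $(p,\infty)$ is $u^\star=p+\sqrt{tp(1-p)/s}$. When $t<s(1-p)/p$ the critical point $u^\star$ lies in the open interval $(p,1)$; writing $A=\sqrt{tp(1-p)/s}$ so that $sA=tp(1-p)/A=\sqrt{stp(1-p)}$, substitution yields
\[
f(u^\star)=ps-pt+2\sqrt{stp(1-p)}=\bigl(\sqrt{ps}+\sqrt{(1-p)t}\bigr)^2-t,
\]
which matches the strictly concave branch of Theorem \ref{thm:LLNp}. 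When $t\ge s(1-p)/p$ one checks $f'(1)=s-tp/(1-p)\le 0$ and more generally $f'\le 0$ on $(p,1]$, so $f$ is non-increasing and the infimum is attained at $u=1$, giving $f(1)=s$, which matches the flat edge of Theorem \ref{thm:LLNp}. Combined with the coupling inequality this yields the claimed identity.

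The only remaining subtleties are the degenerate boundary cases $s=0$ or $t=0$, which require a brief separate check since the half-open interval $(p,1]$ forces the minimizer to be approached in the limit $u\downarrow p$ when $s=0$. The main technical obstacle is therefore not conceptual but the algebraic verification that $\inf_u f(u)$ reproduces exactly the two-part formula of Theorem \ref{thm:LLNp}; the proof reduces to a standard one-variable optimization using the explicit means $\E(I^{(u)})$ and $\E(J^{(u)})$ together with the stochastic dominance built into the boundary construction \eqref{eq:omu}.
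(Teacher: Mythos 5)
Your argument is correct, but it takes a genuinely different route from the paper's. The paper does not assume the explicit shape formula \eqref{eq:busopt}: its appendix proof establishes Theorems \ref{thm:llnp} and \ref{thm:LLNp} simultaneously, first deriving the macroscopic decomposition \eqref{eq:maxpath}, namely $g^{(u)}_{pp}(s,t)=\sup_{0\le z\le s}\{z\,\E(I^{(u)})+g_{pp}(s-z,t)\}\vee\sup_{0\le \tilde z\le t}\{\tilde z\,\E(J^{(u)})+g_{pp}(s,t-\tilde z)\}$, by a coarse-graining argument, and then inverting this relation through the convex-analytic Proposition \ref{prop:dousup} to solve for $g_{pp}$, with a separate geometric-waiting-time path construction handling the flat sector $t\ge s\frac{1-p}{p}$. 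You instead take the shape formula of Theorem \ref{thm:LLNp} as a black box from the cited literature, obtain $g_{pp}(s,t)\le g^{(u)}_{pp}(s,t)$ by a monotone coupling (shared bulk weights, Ber$(u)$ dominating Ber$(p)$ on the $x$-axis, nonnegative geometric weights on the $y$-axis against a zero vertical potential), and close the identity by the one-variable minimization of $f(u)=su+t\,p(1-u)/(u-p)$, whose two branches reproduce \eqref{eq:busopt} exactly; your calculus checks out, and indeed the calculus alone already yields both inequalities, so the coupling is a harmless redundancy. What each route buys: yours is shorter and elementary, but it amounts to verifying consistency between two known formulas and would be circular if, as the paper intends, Theorem \ref{thm:llnp} is to be part of a self-contained re-derivation of Theorem \ref{thm:LLNp} from the invariant boundary model; the paper's route requires the heavier machinery of \eqref{eq:maxpath} and Proposition \ref{prop:dousup} but derives the shape function rather than assuming it. One small slip in your closing remark: the degenerate case where the infimum is only approached as $u\downarrow p$ is $t=0$ (where the critical point collapses to $u^{\star}=p$ and $\inf_u su=ps=g_{pp}(s,0)$ is not attained), not $s=0$; when $s=0$ the map $u\mapsto t\,p(1-u)/(u-p)$ is decreasing on $(p,1]$ and the infimum $0=g_{pp}(0,t)$ is attained at $u=1$.
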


\begin{remark}
To see the characteristic direction manifesting in a different way, start from the formula in Theorem \ref{thm:llnp})
\[
g_{pp}(s,t) = \inf_{p < u \le 1} \{ s\E(I^{(u)}) + t \E(J^{(u)})\} = \inf_{p < u \le 1} g_{pp}^{(u)}(s,t).   
\]
This can be immediately seen from \eqref{eq:14} and the fact that $g_{pp}(s,t)  = sg_{pp}(1,ts^{-1})$ for example.
Without loss set $s = 1$. Then the $u^*$ that minimizes the expression above is $u^* = p + \sqrt{ t p(1-p)}$ if $t < q/p$ and 1 otherwise. Assume $t < q/p$. Solve the expression for $t$ we obtain
\[
t = \frac{(u^*-p)^2}{p(1-p)}.
\]
In other words, $g_{pp}(1,t) =  g_{pp}^{(u^*)}(1,t)$ and direction $(1,t)$ is characteristic according to \eqref{eq:chardirmacro} for the boundary model with parameter $u^*$. Note that the range of characteristic directions only covers the directions for which $g_{pp}^{(u)}(s,t)$ is strictly concave. The flat edge of $g_{pp}$ corresponds to $u^* = 1$.
\end{remark}

\begin{remark}
Along the characteristic direction the last passage time at point $N(m, n)$ it is expected to have variance of order $O(N^{2/3})$ for large $N$, while in the other directions the fluctuations of $G_{\fl{Ns},\fl{Nt}}$ to have order of magnitude $N^{1/2}$ and they are asymptotically Gaussian. Finally it is possible to prove using similar arguments as in \cite{Cie-Geo-17} that the order of the variance in the flat edge is $o(1)$. 

From these considerations, we expect that the large deviations, for the boundary model, to be `unusual' in the characteristic direction,  while in the off-characteristic directions to be the typical decay of order $e^{-N}$  for both tails. We can show that the right tail has deviations of order $e^{-cN}$, but conditional on one of the boundaries being absent. This is essentially equation \eqref{eq:horj}. In Lemma \ref{lem:pow} we give a bound on the left tail that indicates superexponential decay when we move along direction \eqref{eq:chardirmacro} for the boundary model. 
\end{remark}

\begin{proof}[Proof of Theorem \ref{thm:LLNG}]
From equations \eqref{eq:gradients} we may write the last passage time of the invariant model as 
\[
G^{(u)}_{\fl{Ns}, \fl{Nt}} = \sum_{j=1}^{\fl{Ns}}I^{(u)}_{i, 0} + \sum_{j=1}^{\fl{Nt}}J^{(u)}_{ \fl{Ns},j} 
\]
where the $I, J$ variables are respectively the horizontal and vertical increments of the passage time. 
By the definition of the boundary model, the $I$ variables are i.i.d.\ Ber$(u)$. Scaled by $N$, the first sum converges to $s \E(I_{1,0})$ by the law of large numbers.

By Corollary \ref{cor:downrightpath} the $J$ variables are i.i.d.\ Geom$(\frac{u-p}{u(1-p)})$, since they belong on the down-right path that goes from $(0,\fl{Nt})$ horizontally to $(\fl{Ns},\fl{Nt})$  and then vertically down to $(\fl{Ns},0)$. At this point we cannot immediately evoke the law of large numbers as before since the whole sequence (and therefore the whole second sum) changes with $N$. To show that this does not alter the law of large numbers limit, we first appeal to the Borel-Cantelli lemma via a large deviation estimate. Fix an $\e>0$. 
\begin{align*}
\P\Big\{ N^{-1} \sum_{j=1}^{\fl{Nt}}J^{(u)}_{ \fl{Ns},j} \notin \Big( \frac{p(1-u)}{u-p}&-\e,  \frac{p(1-u)}{u-p}+ \e\Big) \Big\} \\
&=\P\Big\{ N^{-1} \sum_{j=1}^{\fl{Nt}}J^{(u)}_{ 0,j} \notin \Big( t \, \ \frac{p(1-u)}{u-p}-\e, t \, \ \frac{p(1-u)}{u-p}+ \e\Big) \Big\} \\
& \le e^{-c(u, p,t ,\e)N},
\end{align*}
for some proper positive constant $c(u,p, t, \e)$. Then for each $\e>0$ there exists a random $N_{\e}$ so that for all $N > N_\e$
\[
t\, \  \frac{p(1-u)}{u-p} - \e < N^{-1}\sum_{j=1}^{\fl{Nt}}J^{(u)}_{\fl{Ns},j} \le t \, \ \frac{p(1-u)}{u-p} + \e,
\]
from the Borel-Cantelli lemma.
Then we have 
\[
su + t\, \ \frac{p(1-u)}{u-p} - \e \le \varliminf_{N \to \infty} \frac{G^{(u)}_{\fl{Ns}, \fl{Nt}}}{N} \le \varlimsup_{N \to \infty} \frac{G^{(u)}_{\fl{Ns}, \fl{Nt}}}{N} \le su + t\, \ \frac{p(1-u)}{u-p} + \e.
\]
The proof is complete when $\e$ is sent to 0. 
\end{proof}

\begin{proof}[Proof of Theorem \ref{thm:HOR}]
By definition \eqref{lpthor} and \eqref{eq:busopt} we have
\begin{align}
g_{pp}^{(u),\text{hor}}(s,t)&=\lim_{N\to\infty}N^{-1}G^{(u),\text{hor}}_{\fl{Ns},\fl{Nt}}\notag\\
&=\lim_{N\to\infty}\max_{1\leq k\leq\fl{Ns}}\Big\{N^{-1}\sum_{i=1}^kI^{(u)}_{i,0}+N^{-1}G_{(k,1),(\fl{Ns},\fl{Nt})}\Big\}\label{eq:uguale}\\
&=\sup_{0\leq a\leq s}\{au+g_{pp}(s-a,t)\}.\notag
\end{align}
The last line follows by the same coarse graining arguments as in the proof of Theorem \ref{thm:llnp} (see Appendix \ref{app:	 LLN}). 

If $t<\frac{1-p}{p}s$
\[
g_{pp}^{(u),\text{hor}}(s,t)=\sup_{0\leq a\leq s - \frac{pt}{1-p}}\{ au+(\sqrt{p(s-a)}+\sqrt{(1-p)t})^2-t \} \vee \sup_{s - \frac{pt}{1-p} <a \le s}\{ a(u-1) + s \}. 
\]
The second supremum is attained at the boundary point $s - \frac{pt}{1-p}$ since it optimizes a decreasing function of $a$. 
In the first supremum, a unique minimizing point exists and it is either a boundary point or the critical point $a^*$ of the derivative of  $f(a)=au+(\sqrt{p(s-a)}+\sqrt{(1-p)t})^2-t$, given by 
\[
 a^*=s-\frac{p(1-p)t}{(u-p)^2}.
\]
If $s-\frac{p(1-p)t}{(u-p)^2}<0$ then we have that $a^*=0$. Otherwise, note that $a^*$  we can substitute $a^*$ into $f(a)$ and obtain
\[f(a^*)=su+\frac{p(1-u)}{u-p}t=g^{(u)}_{pp}(s,t).\]

Finally, if $t\geq\frac{1-p}{p}s$
\[
g_{pp}^{(u),\text{hor}}(s,t)=\sup_{0\leq a\leq s}\{au+s-a\}=s=g_{pp}(s,t).
\]
The proof for $g_{pp}^{(u),\text{ver}}(s,t)$ is similar and left to the reader.
\end{proof}

\section{I.i.d.\ Model: Full LDP}
\label{sec:fullLDP}

We first focus on the model without boundaries. Recall that the maximal path can collect Bernoulli weights only when it takes a step to the right. 
The full rate function is described in Theorem \ref{thm:fullLDP}. As it is usually the case with models of last passage percolation, large deviations of the passage time above its mean are of different exponential scale than the deviations below its mean. With this in mind, in order to obtain a full LDP, one only needs the right-tail rate function.  
This is our beginning point.

Suppose that the target point is $(s,t)$, then, since the last passage time collects  Bernoulli weights only through the right step, \eqref{rval} implies that the probability 
\be\label{rval}
\P\{ G_{\fl{Ns}, \fl{Nt}} \ge Nr \}  \neq 0 \text{ if and only if } r < s.
\ee
In the particular case where $s$ is rational, the probability above can be strictly positive for certain values of $N$, but otherwise it is $0$. 

\begin{theorem}\label{JInt}
For $((s,t), r)$ with $0 \le r < s < \infty$ and $t \in \R_+$, the following $\R_+$-valued limit exists:
\be\label{J}
-\lim_{N\to\infty}N^{-1}\log\P\{G_{\fl{Ns}, \fl{Nt}}\geq Nr\} = J_{s,t}(r).
\ee
$J_{s,t}(r)$, as a function of $((s,t), r )$ is a continuous convex function on the interior of the set $A = \{ ((s, t), r): s \ge r \vee 0, t \in \R_+, r \in \R_+\}$.
It can be uniquely extended to a finite continuous convex function on the closure $\bar A$.
Moreover,  $J_{s,t}(r) > 0$ for $r > g_{pp}(s,t).$
\end{theorem}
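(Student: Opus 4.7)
The plan is the standard subadditivity scheme for right-tail LPP rate functions at speed $N$. The starting point is the superadditive inequality
\[
G_{0,(m_1+m_2,n_1+n_2)}\ge G_{0,(m_1,n_1)}+G_{(m_1,n_1),(m_1+m_2,n_1+n_2)},
\]
obtained by concatenating optimal paths. The two passage times on the right depend on disjoint collections of environment cells, so they are independent, and by translation invariance the second is distributed as $G_{0,(m_2,n_2)}$. This gives
\[
\P\{G_{m_1+m_2,n_1+n_2}\ge c_1+c_2\}\ge \P\{G_{m_1,n_1}\ge c_1\}\,\P\{G_{m_2,n_2}\ge c_2\},
\]
so $F(m,n,c):=-\log\P\{G_{m,n}\ge c\}$ is subadditive in $(m,n,c)\in\Z_+^2\times\R_+$. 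Fekete's lemma applied along the rays $\{N(s,t,r)\}$ through rational triples produces $J_{s,t}(r):=\lim_N F(\fl{Ns},\fl{Nt},Nr)/N$; monotonicity $G_{m,n}\le G_{m',n'}$ for $(m,n)\le(m',n')$ combined with rational sandwiching extends the limit to all $(s,t,r)\in\mathrm{int}\,A$.

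\textbf{Joint convexity, continuity, and boundary extension.} The same inequality applied with $(m_i,n_i,c_i)\approx N\lambda_i(s_i,t_i,r_i)$ and $\lambda_1+\lambda_2=1$ yields, upon passing to the limit,
\[
J_{\lambda_1(s_1,t_1)+\lambda_2(s_2,t_2)}(\lambda_1 r_1+\lambda_2 r_2)\le\lambda_1 J_{s_1,t_1}(r_1)+\lambda_2 J_{s_2,t_2}(r_2)
\]
first for rational $\lambda_i\in[0,1]$ and, by the already-established pointwise continuity along rays, for all real $\lambda_i$. This is joint convexity of $J$ on $\mathrm{int}\,A$, which is automatically continuous on the interior of its effective domain. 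The finite continuous extension to $\bar A$ is by lower-semicontinuous regularization. Finiteness on the boundary is checked directly: on $r=s$, $\P\{G_{\fl{Ns},\fl{Nt}}=\fl{Ns}\}\ge p^{\fl{Ns}}$ bounds $J_{s,t}(s)\le s\log(1/p)$; on $t=0$ the passage time is a $\mathrm{Bin}(\fl{Ns},p)$ sum with the Cram\'er rate; the edges $r=0$ and $s=0$ are trivial.

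\textbf{Strict positivity; the main obstacle.} The delicate claim is $J_{s,t}(r)>0$ for $r>g_{pp}(s,t)$. The plan is to use the invariant boundary model of Section \ref{sec:model3} as a comparison tool. For each $u\in(p,1]$ one couples the axis weights so that $G_{\fl{Ns},\fl{Nt}}\le G^{(u)}_{\fl{Ns},\fl{Nt}}$. Decomposing $G^{(u)}$ along the down-right path from $(0,\fl{Nt})$ to $(\fl{Ns},0)$, Burke's property (Corollary \ref{cor:downrightpath}) identifies
\[
G^{(u)}_{\fl{Ns},\fl{Nt}}=\sum_{i=1}^{\fl{Ns}}I^{(u)}_{i,0}+\sum_{j=1}^{\fl{Nt}}J^{(u)}_{\fl{Ns},j}
\]
as a sum of independent $\mathrm{Ber}(u)$ and $\mathrm{Geom}\!\left(\tfrac{u-p}{u(1-p)}\right)$ variables. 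Cram\'er's theorem then gives a strictly positive right-tail rate $I^{(u)}_{s,t}(r)>0$ for $r>g_{pp}^{(u)}(s,t)$, and the coupling yields $J_{s,t}(r)\ge I^{(u)}_{s,t}(r)$. The variational identity $g_{pp}(s,t)=\inf_{u\in(p,1]}g_{pp}^{(u)}(s,t)$ from Theorem \ref{thm:llnp} now says that for every $r>g_{pp}(s,t)$ one can choose $u$ with $g_{pp}^{(u)}(s,t)<r$, producing strict positivity throughout $(g_{pp}(s,t),s)$. The main obstacle is exactly this step: pushing the positive lower bound on $J_{s,t}$ arbitrarily close to $g_{pp}(s,t)$, which is the whole point of having a boundary model with a freely-tunable parameter $u$.
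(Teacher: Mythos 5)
Your existence, convexity and extension steps coincide with the paper's own argument: superadditivity of $G$ plus independence of disjoint blocks gives $\P\{G_{\fl{(m+n)s},\fl{(m+n)t}}\ge (m+n)r\}\ge\P\{G_{\fl{ms},\fl{mt}}\ge mr\}\P\{G_{\fl{ns},\fl{nt}}\ge nr\}$, Fekete's lemma gives the limit, the same inequality at a convex combination of triples gives joint convexity, and the continuous extension to $\bar A$ is convex analysis (the paper cites Rockafellar, Theorems 10.1--10.3). Where you genuinely diverge is the last claim, $J_{s,t}(r)>0$ for $r>g_{pp}(s,t)$: the paper deduces it from a Comets--Yoshida type concentration inequality $\P\{|G_{\fl{Ns},\fl{Nt}}-\E G_{\fl{Ns},\fl{Nt}}|\ge N\e\}\le 2e^{-c\e^2N}$ together with $N^{-1}\E G_{\fl{Ns},\fl{Nt}}\to g_{pp}(s,t)$, whereas you compare with the stationary model via the coupling $G\le G^{(u)}$ and the variational identity of Theorem \ref{thm:llnp}. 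Your route is viable and closer in spirit to Sections \ref{sec:model3}--\ref{sec:ldpwb}, and it trades the concentration input for the Burke/LLN machinery.

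However, as written it contains one incorrect step. The edges carrying $\{I^{(u)}_{i,0}\}_{i\le\fl{Ns}}$ and $\{J^{(u)}_{\fl{Ns},j}\}_{j\le\fl{Nt}}$ do not lie on a common down-right path, so Corollary \ref{cor:downrightpath} does not make the whole collection mutually independent, and in fact it is not: already $J^{(u)}_{1,1}=(J^{(u)}_{0,1}-I^{(u)}_{1,0}+\om_{1,1})^+$ is correlated with $I^{(u)}_{1,0}$, since $\P\{J^{(u)}_{1,1}=0\mid I^{(u)}_{1,0}=0\}=\tfrac{u-p}{u}$ while $\P\{J^{(u)}_{1,1}=0\mid I^{(u)}_{1,0}=1\}=\tfrac{u-p}{u(1-p)}\big(1+\tfrac{p(1-u)(1-p)}{u(1-p)}\big)$, which is strictly larger. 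So you cannot apply Cram\'er to the combined sum as a sum of independent variables. The gap is easily repaired without independence: for $r>g^{(u)}_{pp}(s,t)$ choose $x$ with $x>su$ and $r-x>t\tfrac{p(1-u)}{u-p}$ and bound
\[
\P\{G^{(u)}_{\fl{Ns},\fl{Nt}}\ge Nr\}\le \P\Big\{\sum_{i=1}^{\fl{Ns}}I^{(u)}_{i,0}\ge Nx\Big\}+\P\Big\{\sum_{j=1}^{\fl{Nt}}J^{(u)}_{\fl{Ns},j}\ge N(r-x)\Big\},
\]
where each term separately is a sum of i.i.d.\ variables (Bernoulli$(u)$ on the axis; Geometric by Corollary \ref{cor:downrightpath} applied to the single down-right path through the column $x=\fl{Ns}$, exactly as in the proof of Theorem \ref{thm:LLNG}), hence exponentially small by a Chernoff bound. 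With this modification your comparison argument does give $J_{s,t}(r)>0$ on $(g_{pp}(s,t),s)$, and the rest of your proposal stands.
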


\begin{remark}
From this point onwards, we assume that $J_{s,t}(r)$ will be the function with domain $\bar A$, to have a generalised lower-semicontinuous convex function on $\R^3$. The only thing one needs to keep in mind is that when we are discussing boundary values, the limit representation \eqref{rval} is no longer available to us. This does not affect the results that follow.
\end{remark}

We present the proof of this basic result in Appendix \ref{app:prop}. The methodology utilises the super-additivity of $G$ and follows the proof steps of \cite{Sep-98-mprf-2}.

\begin{figure}
\begin{center}
\includegraphics[height=4.5cm]{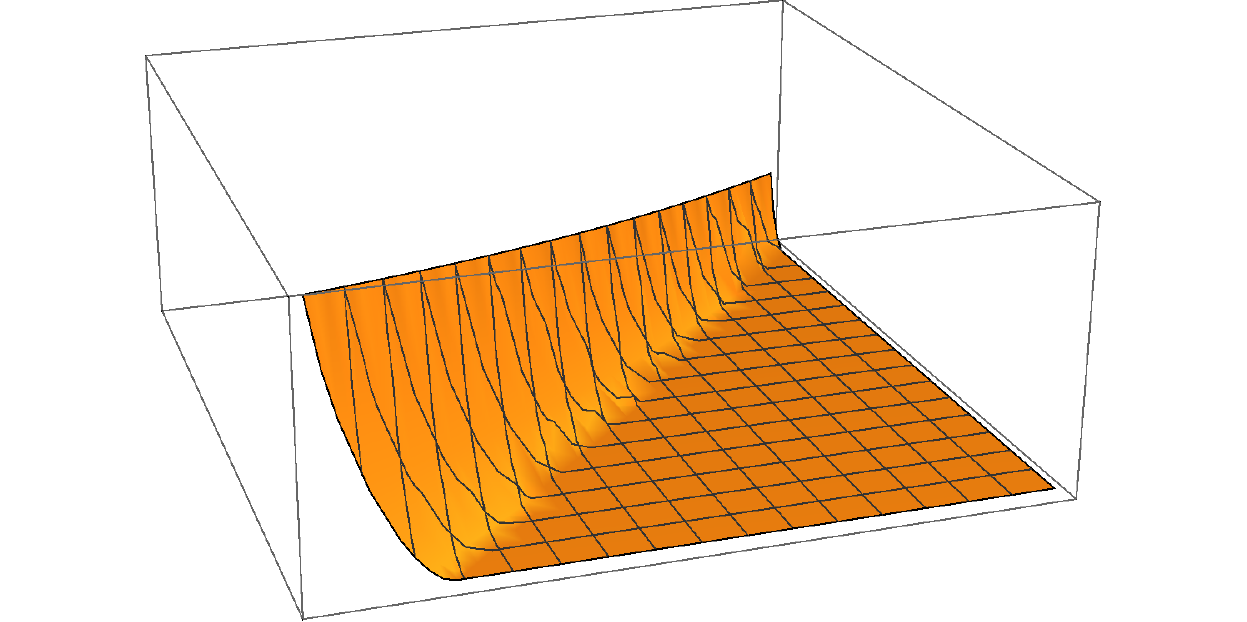}
\hspace{0cm}
\includegraphics[height=3.6cm]{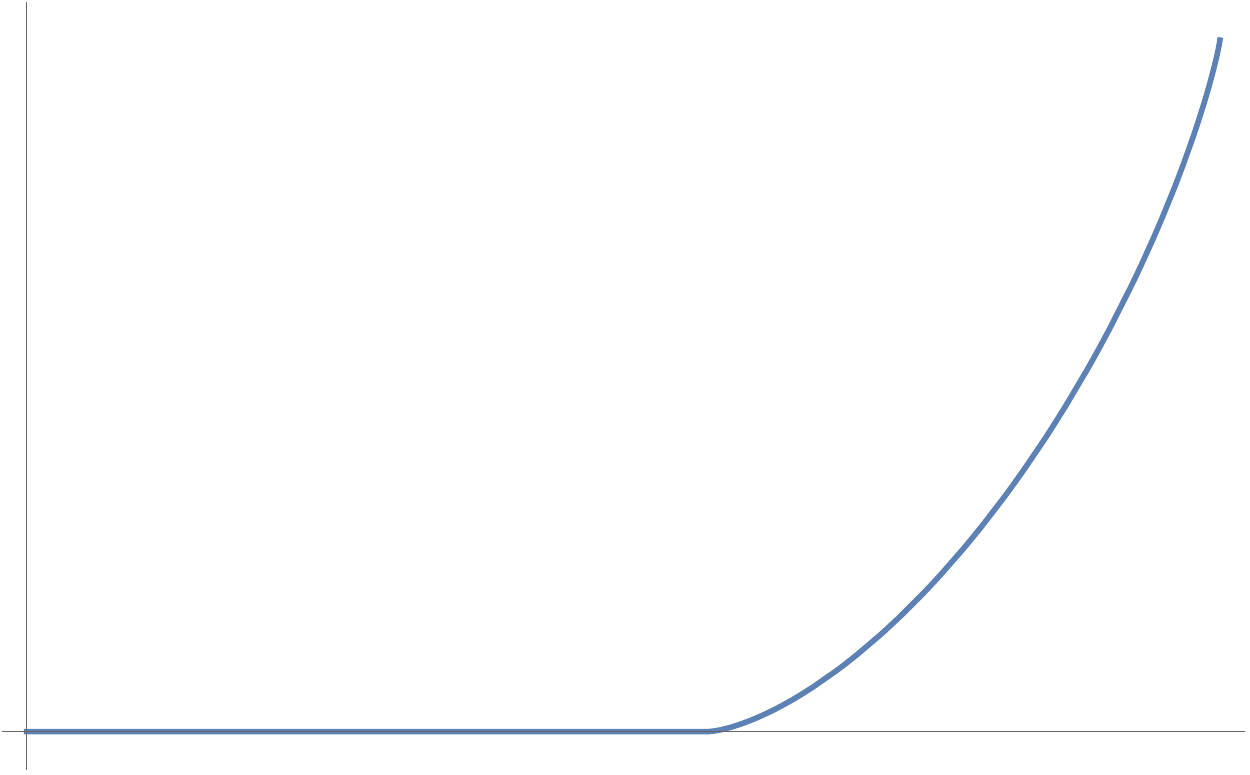}
\end{center}
\caption{Graphical representation of the function $J_{s,t}(r)$. In both figures we used $p=1/2$ and $t=1$. To the left we have the lower-semicontinuous version of $J_{s,1}(r)$ as a function of $(s,r)$. You can see that it is finite for $s \le r$. To the right is the function $J_{2,1}(r)$.}
\end{figure}

The continuous extension up to $\bar A$ makes the function $J_{s,t}(r)$ lower-semicontinuous on $\R^3$ where it takes the value $\infty$ outside of $\bar A$.

It will be useful to also know some of the boundary values of the lower semi-continuous extension. 
We summarise the results in the following corollary: 
\begin{corollary}\label{thm:JJJ} The lower-semicontinuous extension of $J_{s,t}(r)$ takes the following values on $\partial A$
\be
J_{s,t}(r) = 
\begin{cases} 
0, &\quad t = 0, r \le 0, s \in \R_+,\\
0, &\quad t \in \R_+, r \le 0, s=0, \\
sI^{(p)}_{\mathcal B}(r/s), & \quad  t = 0, 0 \le r \le s, s \in \R_+\\
\lim_{r \nearrow s} J_{s,t}(r), & \quad  t, s \in \bR_>0,  r = s.
\end{cases}
\ee 
\end{corollary}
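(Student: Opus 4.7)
The plan is to compute each of the four boundary values directly from the definition of $G$ on the axes, invoking Theorem \ref{JInt} only for the corner case $r=s$ that is not otherwise explicit.

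For the boundary $t=0$ (cases $1$ and $3$), the unique admissible path from $(0,0)$ to $(\fl{Ns},0)$ consists of $\fl{Ns}$ horizontal steps, so by the potential \eqref{eq:potential} we have
\[
G_{\fl{Ns},0}=\sum_{j=1}^{\fl{Ns}}\omega_{(j,0)},
\]
an i.i.d.\ sum of Bernoulli$(p)$ random variables. For $r\le 0$ the event $\{G_{\fl{Ns},0}\ge Nr\}$ has probability one, so $J_{s,0}(r)=0$. For $0\le r\le s$, Cramér's theorem for i.i.d.\ Bernoulli$(p)$ sums (with rate $I^{(p)}_{\mathcal B}$, the Legendre transform of the Bernoulli$(p)$ log-m.g.f.) gives
\[
-N^{-1}\log\P\Big\{\sum_{j=1}^{\fl{Ns}}\omega_{(j,0)}\ge Nr\Big\} \;\longrightarrow\; s\,I^{(p)}_{\mathcal B}(r/s),
\]
after a routine change of scale $r/s$ in the Cramér rate (the floor discrepancy contributes $o(1)$).

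For the boundary $s=0$ (case $2$), the unique admissible path consists of $\fl{Nt}$ vertical steps, each contributing $0$ to the potential by \eqref{eq:potential}; hence $G_{0,\fl{Nt}}\equiv 0$ and $\{0\ge Nr\}$ has probability one for $r\le 0$, giving $J_{0,t}(r)=0$. Case $4$ (the diagonal boundary $r=s$ with $s,t>0$) is not given by a direct computation of the probability, but follows immediately from Theorem \ref{JInt}: the function $(s,t,r)\mapsto J_{s,t}(r)$ admits a finite continuous convex extension to $\bar A$, which combined with convexity of $r\mapsto J_{s,t}(r)$ on $(-\infty,s]$ forces the one-sided limit $\lim_{r\nearrow s}J_{s,t}(r)$ to exist and to coincide with the boundary value $J_{s,t}(s)$. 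The only nontrivial ingredient is Theorem \ref{JInt} itself (proved in Appendix \ref{app:prop}); once continuity on $\bar A$ is available, the corollary reduces to the axis computations above plus continuity at the diagonal corner, and no additional obstacle arises.
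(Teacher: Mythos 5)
There is a genuine gap, and it sits in your case $3$. The corollary is a statement about the \emph{lower-semicontinuous (continuous) extension} of $J_{s,t}(r)$ from the open set $A$ to $\bar A$, i.e.\ about the limits of the interior values, such as $\lim_{t\to 0^+}J_{s,t}(r)$; the paper's remark after Theorem \ref{JInt} stresses precisely that on $\partial A$ the probabilistic limit representation is no longer what defines the function. What you compute on the axis $t=0$ is a different object: the Cram\'er rate for the on-axis sum $G_{\fl{Ns},0}=\sum_{j\le \fl{Ns}}\omega_{(j,0)}$. Your computation is correct as far as it goes, but it does not by itself identify this Cram\'er rate with $\lim_{t\to 0^+}J_{s,t}(r)$. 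One inequality is cheap: since vertical steps carry no weight, $G_{\fl{Ns},\fl{Nt}}\ge G_{\fl{Ns},0}$, hence $J_{s,t}(r)\le sI^{(p)}_{\mathcal B}(r/s)$ and the extension is at most the axis rate. The missing content is the reverse bound: that a maximal path confined to a strip of macroscopic height $t=\e$ cannot beat the axis rate by more than an error vanishing with $\e$. This is exactly the ``thin rectangles'' approximation the paper invokes (as in \cite{Geo-Sep-13-}); concretely one needs a coarse-graining/entropy estimate of the type
\[
\P\{G_{\fl{Ns},\fl{N\e}}\ge Nr\}\le \binom{\fl{Ns}+\fl{N\e}}{\fl{N\e}}\,e^{-NsI^{(p)}_{\mathcal B}(r/s)+o(N)},
\]
obtained by a union bound over the at most $\binom{\fl{Ns}+\fl{N\e}}{\fl{N\e}}$ monotone choices of collected sites, whose combinatorial cost has exponential rate $o_\e(1)$. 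Without some such argument your proof establishes the value of the wrong function on $\{t=0,\,0\le r\le s\}$.

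The other cases are essentially fine, though for cases $1$ and $2$ the cleaner (and the paper's) logic is not an on-axis computation but simply that $J_{s,t}(r)=0$ for all interior points with $r\le 0$ (since $r<g_{pp}(s,t)$ there), so the limiting boundary values vanish; your axis computation happens to give the same number but again addresses the direct limit rather than the extension. Your case $4$ coincides with the paper's argument: continuity of the unique extension on $\bar A$ from Theorem \ref{JInt} forces the value at $r=s$ to be $\lim_{r\nearrow s}J_{s,t}(r)$. One further small warning about case $3$ at the corner $r=s$: for the direct on-axis probability the floor discrepancy is not $o(1)$ there (the event can be empty when $Ns\notin\Z$), which is another sign that the quantity to compute is the extension, not the boundary LDP limit.
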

This follows from the fact that $J_{s,t}$ needs to be lower-semicontinuous, and it is briefly discussed after the proof of Theorem \ref{JInt} in the Appendix. 
Above we defined  $I _{\mathcal B}$ to be the Cram\'er rate function for sums of i.i.d. $\om_i \sim$ Bernoulli$(p)$,
 \be\label{eq:berrate}
I^{(p)}_{\mathcal B}(r) =  
\begin{cases}
-\displaystyle\lim_{N\to\infty}N^{-1}\log\P\Big\{ \sum_{i=1}^{N}\om_{i}\geq N r \Big\} = r\log \frac{r}{p}+(1-r)\log\frac{1-r}{1-p}, & r \in[p, 1],\\
0, &r < p\\
\infty, & r> 1.
\end{cases}
 \ee

Finally, we show the existence of a good rate function $I_{s,t}(r)$ and list its properties; this is the content of the next theorem. 
We restrict $r\in[0,s]$ because $I_{s,t}(r)= \infty $ for any $r$ outside this interval. 

\begin{theorem}\label{thm:fullLDP}
Let $\om_{i,j}\sim$Bernoulli$(p)$ with $i,j\geq1$ and $(s, t)\in(0,\infty)^2$. Then there exists a generalised function $I_{s, t}(r)$ so that the  distributions of $N^{-1}G_{\fl{Ns},\fl{Nt}}$ satisfy an LDP with normalisation $N$ and rate function $I_{s,t}(r)$. 
To be precise, the following bounds hold for any open set $H$ and any closed set $F$ in $[0,s]$:
\be\label{upldp}
\varlimsup_{N\to\infty} N^{-1} \log \P\{N^{-1} G_{\fl{Ns},\fl{Nt}}\in F \} \leq - \inf_{r\in F} I_{s,t}(r) 
\ee
and
\be\label{loldp}
\varliminf_{N\to\infty} N^{-1} \log \P\{N^{-1}  G_{\fl{Ns},\fl{Nt}}\in H \} \geq - \inf_{r\in H} I_{s,t}(r) .
\ee

The rate function $I_{s,t}(r)$ is defined by 
\be\label{eq:III}
I_{s,t}(r)=
\begin{cases}
J_{s,t}(r),\qquad &r\in[g_{pp}(s,t),s],\\
\infty,\qquad &\text{otherwise}.
\end{cases}
\ee
Rate function $J_{s,t}(r)$ is the right-tail rate function computed in Theorem \ref{JInt}. In particular,  
on $[g_{pp}(s,t),s]$ the rate function $I_{s,t}$ is finite, strictly increasing, continuous and convex. Moreover, the unique zero of $I_{s,t}(r)$ is at $r = g_{pp}(s, t)$. 
\end{theorem}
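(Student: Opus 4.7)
The plan is to derive all properties of $I_{s,t}$ from the right-tail rate function $J_{s,t}$ of Theorem \ref{JInt} and Theorem \ref{thm:LLNp}, and then to assemble the LDP bounds \eqref{upldp} and \eqref{loldp} by splitting any target set at $g_{pp}(s,t)$. Finiteness, convexity and continuity of $J_{s,t}$ on $[g_{pp}(s,t),s]$ are given directly by Theorem \ref{JInt}, so they transfer to $I_{s,t}$ on its effective domain. For strict monotonicity, observe that $J_{s,t}$ is non-decreasing because $\{N^{-1}G_{\fl{Ns},\fl{Nt}}\ge r'\}\subseteq\{N^{-1}G_{\fl{Ns},\fl{Nt}}\ge r\}$ whenever $r\le r'$; combined with $J_{s,t}(g_{pp}(s,t))=0$ (from the LLN) and $J_{s,t}(r)>0$ for $r>g_{pp}(s,t)$ (from Theorem \ref{JInt}), convexity forces $J_{s,t}$ to be strictly increasing on $[g_{pp}(s,t),s]$. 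Setting $I_{s,t}\equiv+\infty$ outside $[g_{pp}(s,t),s]$ produces a lower-semicontinuous, $[0,\infty]$-valued function with compact level sets and unique zero at $g_{pp}(s,t)$, which is a good rate function enjoying the properties stated in the theorem.

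For the upper bound \eqref{upldp}, fix a closed $F\subseteq[0,s]$ and split $F=F_+\cup F_-$ with $F_+=F\cap[g_{pp}(s,t),s]$ and $F_-=F\cap[0,g_{pp}(s,t))$. If $F_+\neq\emptyset$, set $r_\star=\min F_+$; monotonicity of right tails together with Theorem \ref{JInt} yields $\P\{N^{-1}G_{\fl{Ns},\fl{Nt}}\in F_+\}\le\P\{N^{-1}G_{\fl{Ns},\fl{Nt}}\ge r_\star\}\le e^{-N(J_{s,t}(r_\star)-\e)}$ for $N$ large, where $J_{s,t}(r_\star)=\inf_{F_+}I_{s,t}=\inf_F I_{s,t}$. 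For $F_-$ one needs
\[
\limsup_{N\to\infty}N^{-1}\log\P\{N^{-1}G_{\fl{Ns},\fl{Nt}}\le r'\}=-\infty\qquad\text{for every }r'<g_{pp}(s,t),
\]
which renders the $F_-$-contribution negligible on the $N$-exponential scale. Merging the two estimates via $\log(a+b)\le\log 2+\max\{\log a,\log b\}$ gives \eqref{upldp}.

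For the lower bound \eqref{loldp}, let $H\subseteq[0,s]$ be open. If $H\cap[g_{pp}(s,t),s]=\emptyset$ then $\inf_H I_{s,t}=+\infty$ and \eqref{loldp} is automatic. Otherwise pick $r\in H\cap[g_{pp}(s,t),s]$; the case $r=g_{pp}(s,t)$ is immediate since $I_{s,t}(r)=0$. For $r>g_{pp}(s,t)$ choose $\delta>0$ small enough that $(r-\delta,r+\delta)\subseteq H$ and $r-\delta>g_{pp}(s,t)$, and write
\[
\P\{N^{-1}G_{\fl{Ns},\fl{Nt}}\in(r-\delta,r+\delta)\}\ge\P\{N^{-1}G_{\fl{Ns},\fl{Nt}}>r-\delta\}-\P\{N^{-1}G_{\fl{Ns},\fl{Nt}}\ge r+\delta\}.
\]
Theorem \ref{JInt} bounds the first term below by $e^{-N(J_{s,t}(r-\delta)+\e)}$ and the second above by $e^{-N(J_{s,t}(r+\delta)-\e)}$ for large $N$; strict monotonicity of $J_{s,t}$ lets one take $\e$ so small that $J_{s,t}(r-\delta)+\e<J_{s,t}(r+\delta)-\e$, so the first term dominates and $\liminf_N N^{-1}\log\P\{N^{-1}G_{\fl{Ns},\fl{Nt}}\in H\}\ge -J_{s,t}(r-\delta)-\e$. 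Letting $\delta,\e\to 0$ and using continuity of $J_{s,t}$ from Theorem \ref{JInt} gives $\liminf\ge -I_{s,t}(r)$; optimising over $r\in H$ completes \eqref{loldp}.

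The only genuinely new input is the left-tail super-exponential estimate used in the upper bound. Because the environment is bounded and $G$ is super-additive, this is an instance of the standard $n^2$-speed lower-tail bound for LPP: it can be obtained by partitioning the rectangle into a bounded number of sub-rectangles of fixed aspect ratio, extracting i.i.d.\ copies along a diagonal via super-additivity and applying a Hoeffding-type concentration inequality, in the spirit of Kesten's estimate \cite{Kes-86-aspects} for FPP and the LPP argument of \cite{Ba-17}. This is the main technical obstacle; the remainder of the proof is routine LDP assembly from one-sided tail estimates.
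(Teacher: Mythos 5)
Your assembly of the LDP from one-sided tail estimates is essentially the route the paper takes: define $I_{s,t}$ by \eqref{eq:III}, get the upper bound on a closed $F$ by splitting at $g_{pp}(s,t)$ and using the right-tail rate $J_{s,t}$ (monotone on $[g_{pp}(s,t),s]$) for the part above, and get the lower bound by localizing $H$ to a small interval above $g_{pp}(s,t)$ and writing the probability of that interval as a difference of two right-tail probabilities with strictly different exponential rates. Your convexity argument for strict monotonicity of $J_{s,t}$ on $[g_{pp}(s,t),s]$ is fine (and more explicit than the paper, which asserts it). The minor bookkeeping point that $\sup F_-$ could equal $g_{pp}(s,t)$ is harmless, since closedness of $F$ then forces $g_{pp}(s,t)\in F$ and the upper bound is trivial because $\inf_F I_{s,t}=0$.

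The genuine gap is the left-tail estimate, which you correctly identify as the only new input but do not actually prove, and the recipe you sketch for it would fail. What is needed is $\varlimsup_N N^{-1}\log\P\{N^{-1}G_{\fl{Ns},\fl{Nt}}\le r'\}=-\infty$ for every $r'<g_{pp}(s,t)$; in the paper this is Lemma \ref{lem:pow}(a), proved by coarse graining the rectangle into order $N$ disjoint diagonals, each consisting of order $N$ blocks of \emph{constant} size, so that on each diagonal $G$ dominates a sum of $\mathcal O(N)$ i.i.d.\ block passage times (Cram\'er gives $e^{-cN}$ per diagonal) and the $\mathcal O(N)$ diagonals are independent, yielding $\P\{G_{\fl{Ns},\fl{Nt}}\le Nr'\}\le Ce^{-cN^2}$. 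Your sketch --- ``a bounded number of sub-rectangles of fixed aspect ratio'' along one diagonal plus a Hoeffding-type bound --- only produces a speed-$N$ estimate: each macroscopic block deviates below its mean with probability of order $e^{-cN}$, and finitely many independent factors cannot push the exponent beyond order $N$. A bound $e^{-cN}$ does not give $\varlimsup=-\infty$ at speed $N$, hence does not justify setting $I_{s,t}\equiv\infty$ below $g_{pp}(s,t)$ in the upper bound \eqref{upldp}. To close the gap you must let the number of independent blocks grow superlinearly in $N$ (as in Lemma \ref{lem:pow}, or in the arguments of \cite{Kes-86-aspects,Sep-98-mprf-2} that it adapts); once that lemma is in hand, the rest of your proof goes through and coincides with the paper's.
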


In order to obtain a full large deviation principle, we must estimate the lower tail for the probabilities of the last passage time. As is usual in the solvable models of last passage percolation, the speed for the lower tail is different than $N$. Our first lemma establishes the same fact for this model.  In turn, this gives left tail bounds strong enough to imply $I_{s,t}(r)=\infty$ for $r<g_{pp}(s,t)$ for both boundary and i.i.d.\ model.

\begin{lemma}\label{lem:pow}
There exist constants $c>0$, $C<\infty$ that depend on parameters $s, t, p, u$,so that for all $N \ge 1$ the following estimates hold:
\begin{enumerate}[(a)]
\item For $(s,t)\in(0,\infty)^2$ and $r\in[0,g_{pp}(s,t))$
\[
\P\{G_{\fl{Ns},\fl{Nt}}\leq Nr\}\leq Ce^{- cN^{2}}.
\]
\item For $(s,t)=\alpha\Big(1,\frac{(u-p)^2}{p(1-p)}\Big)$ for some $\alpha>0$, parallel to the characteristic direction, and $r\in[0,g^{(u)}_{pp}(s,t))$,
\[
\P\{G^{(u)}_{\fl{Ns},\fl{Nt}}\leq Nr\}\leq Ce^{- cN^{2}}.
\]
\end{enumerate}
\end{lemma}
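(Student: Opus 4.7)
Both bounds will follow the same guiding principle: expose $\Theta(N)$ mutually independent rare events in the environment, each with a Cram\'er-type $e^{-cN}$ bound, so that the product yields the announced $e^{-cN^{2}}$. Throughout I fix the strictly positive gap $\delta=g_{pp}(s,t)-r$ in (a) and $\delta=g^{(u)}_{pp}(s,t)-r$ in (b), and absorb everything into constants depending on $s,t,p,u,\delta$.

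For part (a), when $r<sp$ I would use the admissible paths $(0,0)\to(0,k)\to(\fl{Ns},k)\to(\fl{Ns},\fl{Nt})$ for $k=1,\ldots,\fl{Nt}$, which live on mutually disjoint rows, to get $G_{\fl{Ns},\fl{Nt}}\ge\max_{k}S_{k}$ with $S_{k}:=\sum_{i=1}^{\fl{Ns}}\om_{i,k}\sim\mathrm{Binomial}(\fl{Ns},p)$ independent. By Cram\'er, $\P\{S_{k}\le Nr\}\le e^{-\fl{Ns}\,I^{(p)}_{\mathcal{B}}(r/s)}\le e^{-cN}$, and independence over $k$ multiplies to $e^{-cN^{2}}$. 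For the residual range $r\in[sp,g_{pp}(s,t))$ single rows are too thin, so I would thicken them to macroscopic horizontal strips of height $\fl{\beta N}$ with $\beta\in(0,t)$ chosen so that $g_{pp}(s,\beta)>r+\delta/2$ (possible by continuity of $g_{pp}$ and Theorem \ref{thm:LLNp}). The $\fl{t/\beta}$ strip sub-LPPs are i.i.d., and inside each strip a further subdivision along the horizontal direction into $\Theta(N)$ sub-blocks of bounded length, combined via super-additivity of $G$, recovers the $\Theta(N)$ independent Cram\'er pieces per strip whose product again gives $e^{-cN^{2}}$.

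For part (b) I would use $G^{(u)}=G^{(u),\mathrm{hor}}\vee G^{(u),\mathrm{ver}}$ from \eqref{eq:gvg}, so that the event to estimate is the intersection $\{G^{(u),\mathrm{hor}}\le Nr\}\cap\{G^{(u),\mathrm{ver}}\le Nr\}$. For each fixed $k$ the $k$-th term in \eqref{lpthor} is the sum $\sum_{i=1}^{k}\om_{i,0}+G_{(k,1),(\fl{Ns},\fl{Nt})}$ of two genuinely \emph{independent} pieces, since the $x$-axis boundary weights and the bulk weights strictly to the right of column $k$ are disjoint families. In the characteristic direction \eqref{eq:chardirmacro} Theorem \ref{thm:HOR} yields $g^{(u),\mathrm{hor}}_{pp}(s,t)=g^{(u),\mathrm{ver}}_{pp}(s,t)=g^{(u)}_{pp}(s,t)$, so taking $k=k^{*}$ equal to the natural LLN-optimizer makes the joint mean exceed $Nr$ by $N\delta/2$, and Cram\'er applied separately to each independent piece gives $\P\{G^{(u),\mathrm{hor}}\le Nr\}\le e^{-c_{\mathrm{h}}N}$ and symmetrically $\P\{G^{(u),\mathrm{ver}}\le Nr\}\le e^{-c_{\mathrm{v}}N}$. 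The $x$- and $y$-boundary weights form disjoint families, and Corollary \ref{cor:downrightpath} decouples the bulk contributions to the two events via a suitable down-right path, turning the two $e^{-cN}$ marginal estimates into a genuine $e^{-cN^{2}}$ product.

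The principal obstacle will be honest independence. In (a) the fat-strip decomposition a priori yields only $O(1)$ independent pieces and hence $e^{-cN}$; to reach $e^{-cN^{2}}$ the secondary horizontal subdivision inside each strip must produce sub-LPPs whose LLN means still strictly exceed $Nr$, which needs a quantitative version of Theorem \ref{thm:LLNp} at the strip scale together with uniformity of the Cram\'er bounds as the sub-block size is sent to infinity. In (b) the two decompositions $G^{(u),\mathrm{hor}}$ and $G^{(u),\mathrm{ver}}$ share essentially the entire bulk, so naively multiplying their marginal $e^{-cN}$ bounds is not legitimate; the true independence in the characteristic direction must be extracted from the Burke-type structure of the gradients on a cleverly chosen down-right path, which is precisely where the characteristic direction itself plays an essential role.
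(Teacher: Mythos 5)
Your argument for part (a) is sound only on the restricted range $r<sp$: there the disjoint rows are indeed $\Theta(N)$ independent chains, each carrying an $e^{-cN}$ Cram\'er factor, and the product gives $e^{-cN^2}$. For the remaining range $r\in[sp,g_{pp}(s,t))$ the proposed repair does not work as described. Choosing $\beta$ with $g_{pp}(s,\beta)>r+\delta/2$ forces $\beta$ close to $t$, so you have only $O(1)$ strips and independence across strips can give at most speed $N$; and the ``further subdivision along the horizontal direction into $\Theta(N)$ sub-blocks of bounded length'' cannot be chained by super-additivity: full-height sub-blocks placed side by side are not traversable corner-to-corner by a single directed path (exiting block $i$ at the top of the strip precludes entering block $i+1$ at its bottom), and the event $\{G_{\fl{Ns},\fl{Nt}}\le Nr\}$ only controls \emph{sums} of block passage times along genuinely admissible chains, never each sub-block separately, so there is no legitimate product of $\Theta(N)$ factors each of size $e^{-cN}$ inside one strip. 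The missing idea is the two-parameter coarse-graining of the paper: fixed mesoscopic $mq_1\times mq_2$ rectangles arranged in $\Theta(N)$ \emph{disjoint staircases} (the diagonals $\Delta_i$) aligned with the macroscopic direction, so that within each staircase the corner-to-corner passage times do concatenate; the per-block mean exceeds $m(r+2\varepsilon)$ by the LLN once $m$ is large (this is how one beats $sp$), Cram\'er for the i.i.d.\ sum of $\Theta(N)$ blocks gives $e^{-cN}$ per staircase, and independence across the $\Theta(N)$ disjoint staircases yields $e^{-cN^2}$.

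Part (b) as proposed cannot produce the speed $N^2$ at all: writing the event via \eqref{eq:gvg} as the intersection of $\{G^{(u),{\rm hor}}\le Nr\}$ and $\{G^{(u),{\rm ver}}\le Nr\}$ and multiplying two marginal bounds of order $e^{-cN}$ would give, even under perfect independence, only $e^{-(c_{\rm h}+c_{\rm v})N}$ — still speed $N$ — and in fact the two events share the entire bulk, while Burke's property (Corollary \ref{cor:downrightpath}) gives independence of increment variables along a down-right path, not a decoupling of these two passage times. The observation you are missing is much simpler and is what the paper uses: along the characteristic direction $g^{(u)}_{pp}(s,t)=g_{pp}(s,t)$, so any $r<g^{(u)}_{pp}(s,t)$ lies strictly below the i.i.d.\ shape; since the boundary weights are nonnegative, $G^{(u)}_{\fl{Ns},\fl{Nt}}$ dominates the passage time of every bulk staircase, and the part (a) block argument applied verbatim to the bulk already gives $\P\{G^{(u)}_{\fl{Ns},\fl{Nt}}\le Nr\}\le Ce^{-cN^2}$, with no need to decouple the horizontal and vertical boundary contributions at all.
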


\begin{proof}[Proof of Lemma \ref{lem:pow}]

We prove (b) but similar arguments work for (a). We bound $G^{(u)}_{\fl{Ns},\fl{Nt}}$ from below, using the superadditivity property of the last passage times. 

For this reason we consider a subset of lattice paths, arranged in a collection of i.i.d.\ partition function over subsets of rectangles. This block argument proof was first used in \cite{Kes-86-aspects} and later adapted in \cite{Sep-98-mprf-2} for the last passage time and in \cite{Geo-Sep-13-, Ja-15} for the log-gamma polymer and the brownian polymer model respectively.

Note that if $(s,t)$ are chosen in the characteristic direction it is immediate to see that $g^{(u)}_{pp}(s,t)=g_{pp}(s,t)$. 

We first show the result for $(s,t) \in \Q_+^2$. In order to highlight this distinction we assume that the target point is $(q_1, q_2) \in \Q^2_+$.
 Fix $0<\e<1/4(g^{(u)}_{pp}(q_1,q_2)-r)$. Define a new scale parameter $m\in\N$ large enough so that $m(q_1\wedge q_2)\geq1$, $ mq_1,mq_2\in \N_+$ and 
\be\label{mE}
\E[G_{mq_1,mq_2}] >m(r+2\e).
\ee

We will use $mq_1$ and $mq_2$ to coarse-grain our environment. 
Let $\mathcal R^{k,\ell}_{a,b}=\{a,\dots,a+k-1\}\times\{b,\dots,\ell+b-1\}$ denote the $k\times\ell$ rectangle with lower left corner at $(a,b)$. For $i,\ell\geq0$ define pairwise disjoint $mq_1\times mq_2$ rectangles 
\[
\mathcal R^i_\ell=\mathcal R^{mq_1,mq_2}_{(\ell+i)mq_1+i+1,\ell(mq_2+1)+1}.
\] 
The rectangles $\mathcal R^i_\ell$ are separated by the inter-site distance to avoid a scenario where a path goes along a common edge  between two rectangles. This way,  we will be able to clearly say in which one of the two rectangles the path goes through. For each $i$ we define the diagonal union of rectangles as $\Delta_i=\bigcup_{\ell\geq0}\mathcal R^i_\ell,i\geq0$ and in the sequence we are considering potential paths that stay in a fixed $\Delta_i$.

Moreover, note that the last passage times $G_{\mathbf{v}^w_{i,\ell},\mathbf{v}^e_{i,\ell}}$ in each rectangle are all identically distributed, where $\mathbf{v}^w_{i,\ell}=(\ell+i)mq_1+1+i,\ell(mq_2+1)+1)$ and $\mathbf{v}^e_{i,\ell}=((i+1)mq_1+\ell mq_1,(1+\ell)(mq_2+1))$ are respectively the south-west and north-east corners of $\mathcal R^i_\ell$. 

Define $B,M = M(B)\in \N$ the maximal integers which satisfy
\begin{align}
&(M+1)mq_1+Bmq_1\leq Nq_1\qquad\text{and}\label{eq:fin}\\
&(1+B)(mq_2+1)\leq Nq_2.\label{eq:sin}
\end{align}
The fact that $B$ is maximal and \eqref{eq:sin} imply that 
\be\label{eq:B}
B=\fl{\frac{Nq_2}{mq_2+1}}-1.
\ee
Substituting \eqref{eq:B} in \eqref{eq:fin} we obtain
\be\label{eq:M}
M= \fl{\frac{N}{m}-\fl{\frac{Nq_2}{mq_2+1}}}\text{ and hence } \fl{\frac{N}{m(mq_2+1)}}\leq M\leq \fl{N\frac{mq_2+2}{m(mq_2+1)}}.
\ee
Since $m$ is a constant and assumed much smaller than $N$, we have that $B=B(N)=\mathcal O(N)$ and $M=M(N)=\mathcal O(N)$. Fix a diagonal $\Delta_i$ for $0\leq i\leq M$ and define the union of rectangles in $\Delta_i \cap ([0,Nq_1]\times[0,Nq_2])$ as $\Delta_i^{B}=\bigcup_{0\leq\ell\leq B}\mathcal R^i_\ell$. 


\begin{figure}
\begin{center}
	\begin{tikzpicture}[>=latex, scale=0.27]

\draw[fill=blue!20,blue!20] (15,0.75)--(15,-0.75)--(-0.75,-0.75)--(-0.75,0.75)--(15,0.75);
\draw (15,0.75)--(-0.75,0.75)--(-0.75,-0.75)--(15,-0.75);
\draw (0,0)--(15,0);
\draw (0,0)--(0,11);
\foreach \x in {0,1,...,8}{
\draw[dashed] (15,\x*3)--(19,\x*3);}
\foreach \x in {0,1,...,4}{
\draw[dashed] (\x*3,11)--(\x*3,15);}
\foreach \x in {7,...,11}{
\draw[dashed] (\x*3,11)--(\x*3,15);}
\draw[->] (19,0)--(35,0);
\draw[->] (0,15)--(0,27);
\draw (33,11)--(33,0)node[below]{\tiny$\fl{Ns}$};
\draw (15,25.5)--(0,25.5)node[left]{\tiny$\fl{Nt}$};
\draw (19,25.5)--(33,25.5);
\draw (33,15)--(33,25.5);
\draw[dashed] (15,25.5)--(19,25.5);
\foreach \x in {1,2,...,10}{
\foreach \y in {1,2,...,7}
          {\shade[ball color=gray](1.5*\x,1.5*\y) circle (1.5mm);}}
\foreach \x in {13,14,...,22}{
\foreach \y in {1,2,...,7}
          {\shade[ball color=gray](1.5*\x,1.5*\y) circle (1.5mm);}}
\foreach \x in {1,2,...,10}{
\foreach \y in {11,12,...,17}
          {\shade[ball color=gray](1.5*\x,1.5*\y) circle (1.5mm);}}
\foreach \x in {13,14,...,22}{
\foreach \y in {11,12,...,17}
          {\shade[ball color=gray](1.5*\x,1.5*\y) circle (1.5mm);}}
\foreach \y in {11,12,...,17}
          {\shade[ball color=blue](0,1.5*\y) circle (1.5mm);}
\foreach \y in {1,2,...,7}
          {\shade[ball color=blue](0,1.5*\y) circle (1.5mm);}
\foreach \x in {1,2,...,10}
          {\shade[ball color=nicos-red](1.5*\x,0) circle (1.5mm);}
\foreach \x in {13,14,...,22}
          {\shade[ball color=nicos-red](1.5*\x,0) circle (1.5mm);}
\draw[fill=blue!20] (1.5,1.5)--(6,1.5)--(6,4.5)--(1.5,4.5)--(1.5,1.5);
\draw[fill=blue!20] (7.5,1.5)--(12,1.5)--(12,4.5)--(7.5,4.5)--(7.5,1.5);
\draw[fill=blue!20] (6,6)--(10.5,6)--(10.5,9)--(6,9)--(6,6);
\draw[fill=blue!20,blue!20] (12,6)--(15,6)--(15,9)--(12,9)--(12,6);
\draw[fill=blue!20,blue!20] (13.5,1.5)--(15,1.5)--(15,4.5)--(13.5,4.5)--(13.5,1.5);
\draw (15,6)--(12,6)--(12,9)--(15,9);
\draw (15,1.5)--(13.5,1.5)--(13.5,4.5)--(15,4.5);
\draw[fill=blue!20] (30,22.5)--(25.5,22.5)--(25.5,19.5)--(30,19.5)--(30,22.5);
\draw[fill=blue!20] (24,22.5)--(19.5,22.5)--(19.5,19.5)--(24,19.5)--(24,22.5);
\draw[fill=blue!20,blue!20] (25.5,18)--(21,18)--(21,16.5)--(25.5,16.5)--(25.5,18);
\draw (25.5,16.5)--(25.5,18)--(21,18)--(21,16.5);

\draw (3.5,3)node{\tiny$\mathcal R_0^0$};
\draw (10,3)node{\tiny$\mathcal R_0^1$};
\draw (8.5,7.5)node{\tiny$\mathcal R_1^0$};
\draw (14,7.5)node{\tiny$\mathcal R_1^1$};
\draw (28,21)node{\tiny$\mathcal R^M_B$};
\draw (22,21)node{\tiny$\mathcal R^{M-1}_B$};
\draw (28,23.2)node{\tiny$\Delta_M$};
\draw (22,23.2)node{\tiny$\Delta_{M-1}$};
\draw (8.5,9.7)node{\tiny$\Delta_0$};
\draw (14,9.7)node{\tiny$\Delta_{1}$};
\draw[line width=2pt,blue](0,0)--(9,0)--(9,4.5)--(12,4.5)--(12,9)--(15,9);
\draw[line width=2pt,blue](19,24)--(33,24)--(33,25.5);
\draw[my-green, line width=1.5pt] (4.55,0) ellipse (160pt and 30pt);
\draw[my-green, line width=1.5pt]  (26.5,25.25) ellipse (200pt and 45pt);
\end{tikzpicture}
\end{center}
\caption{Representation of the coarse grained $\fl{ms}\times \fl{mt}$ rectangles and the diagonals $\Delta_i$ in the proof of Lemma \ref{lem:pow}. The blue thick line is one of the possible maximal paths. For the bound needed, we are allowed to ignore the path segments outside of the coarse-grained diagonals, particularly  we may ignore the correlated segments when candidate paths traverse the south and north boundary of $[0 ,\fl{Ns}]\times[0, \fl{Nt}]$. Passage times in each $\Delta_1$ are i.i.d. and smaller than the overall passage time.}
\label{fig:1}
\end{figure}

Let $G^\Delta_i$ be the last passage time of all lattice paths in $\Delta_i^{B}$ from the lower left corner of $\mathcal R_0^i$ to the upper right corner of $\mathcal R_B^i$. $G^\Delta_i$ are i.i.d, where in particular $G^\Delta_0$ is the sum of the $B$ last passage times of rectangle $\mathcal R^0$ whose mean is controlled by \eqref{mE}. A standard large deviation estimate for an i.i.d sum gives the following bound 
\be\label{sp}
\begin{aligned}
\P\{G^{(u)}_{\fl{ Nq_1}, \fl{ Nq_2}}\leq Nr\}&\leq \P\{G^\Delta_i\leq Nr \text{ for }0\leq i\leq M\}\\
&\leq \P\{G^\Delta_0\leq Nr\}^M \leq\P\Big\{\sum_{k=0}^{B(N)} G_k^0\leq Nr\Big\}^{M(N)}\\
&\leq e^{-cB(N)M(N)}\leq e^{-c_1N^{2}}.
\end{aligned}
\ee
This completes the proof for $(s, t) \in \Q_+^2$. 

Finally we show \eqref{sp} holds also for $s,t\in \R_+$. We bound $G^{(u)}_{\fl{Ns}, \fl{Nt}}$ using $G^{(u)}_{\fl{ Nq_1}, \fl{Nq_2}}$ for some special $(q_1,q_2)\in\Q^2_+$ which are close enough to $(s,t)\in \R^2_+$. For any $(q_1, q_2) \le (s,t)$ we have that 
\be\label{f}
\P\{G^{(u)}_{\fl{Ns},\fl{Nt}}\leq Nr\}\leq \P\{G^{(u)}_{\fl{Nq_1}, \fl{Nq_2}}\leq Nr\}, \text{ for all }  r \in [0, g^{(u)}_{pp}(s,t)).
\ee
For any $\delta > 0$ and find $(q_1, q_2)$ so that $\delta>g^{(u)}_{pp}(s,t)- g_{pp}^{(u)}(q_1,q_2) >0$. 
This is possible by the continuity and monotonicity of $g^{(u)}_{pp}$. We choose $\delta < \frac{g^{(u)}_{pp}(s,t) - r}{2}$ and therefore
\[
r <  g^{(u)}_{pp}(s,t) -2\delta < g_{pp}^{(u)}(q_1,q_2) - \delta < g^{(u)}_{pp}(s,t),
\]
for some $(q_1, q_2) \in \Q^2_+$
Then \eqref{f} is a left-tail large deviation anyway for $G^{(u)}_{\fl{Nq_1},\fl{Nq_2}}$ so \eqref{sp} holds.
\end{proof}

\begin{proof}[Proof of Theorem \ref{thm:fullLDP}]
This proof is a consequence of the lemmas and theorems that we have already proved. Define for $r\in\R$ function $ I_{s, t}(r)$ by \eqref{eq:III}.

Then, the regularity properties proved for $J$ in Theorems \ref{JInt} and \ref{thm:JJJ} are also valid for $I_{s,t}$. For the upper large deviation bound \eqref{upldp} we consider two cases:
\begin{enumerate}[(1)]
\item if $F\subseteq [0,g_{pp}(s,t))$, then $r^* = \max \{ x: x \in F\}  < g_{pp}(s,t)$  and we have 
\[
\P\{N^{-1}G_{\fl{Ns},\fl{Nt}}\in F\}\leq \P\{G_{\fl{Ns},\fl{Nt}}\leq Nr^*\}\leq e^{-N^2}.
\]
The last inequality comes from Lemma \ref{lem:pow}. Take logarithms on both sides, divide by $N$, take the limit $N\to \infty$ and finally by definition \eqref{eq:III} conclude that 
\[
\lim_{N\to\infty}N^{-1}\log\P\{N^{-1}G_{\fl{Ns},\fl{Nt}}\in F\}= -\infty=-\inf_{r\in F}I_{s,t}(r).
\]
\item If $F\cap[g_{pp}(s,t),s]\not=\emptyset$ then we split into two different cases: 
\begin{enumerate}[]
\item Case 1: $F \not \ni g_{pp}(s,t)$. Then there exists an $\e>0$ such that $(g_{pp}(s,t)-\e, g_{pp}(s,t)+\e) \subseteq F^c$. Then we bound 
\begin{align*}
\P\{N^{-1}&G_{\fl{Ns},\fl{Nt}}\in F\} \le \P\{N^{-1}G_{\fl{Ns},\fl{Nt}} \le g_{pp}(s, t) - \e \} \\
&\phantom{xxxxxxxxxxxxxxx}+\P\{N^{-1}G_{\fl{Ns},\fl{Nt}}\in F\cap[g_{pp}(s,t)+\e,s]\}.
\end{align*}
By the previous calculations, we already control the first addend by $e^{-cN^2}$ therefore we focus only on the second one which will be of an exponential order of magnitude larger and control the value of the $\varlimsup$. 
Since $F$ and $[g_{pp}(s,t)+\e,s]$ are two closed sets there exists an $r^*$ such that
$
r^*=\min\{r: r\in F\cap[g_{pp}(s,t)+\e,s]\}.
$
It follows that 
\[
\P\{N^{-1}G_{\fl{Ns},\fl{Nt}}\in F\}\leq e^{-cN^2}+\P\{G_{\fl{Ns},\fl{Nt}}\geq Nr^*\}.
\]
Now take the logarithm of both sides, divide by $N$ and take the $\varlimsup$,
\begin{align*}
\lim_{N\to\infty}N^{-1}\log\P\{N^{-1}G_{\fl{Ns},\fl{Nt}}\in F\}& \leq \lim_{N\to\infty}N^{-1}\log( e^{-cN^2} + \P\{G_{\fl{Ns},\fl{Nt}}\geq Nr^*\})\\
&=- J_{s,t}(r^*) = -\inf_{r\in F}I_{s,t}(r).
\end{align*}
The last line is obtained using \eqref{J}, \eqref{eq:III} and the fact that $I_{s,t}(r)$ is a strictly increasing function. 

\item Case 2: $F  \ni g_{pp}(s,t)$. In this case, $\inf_{r \in F} I_{s,t}(r) = 0$, therefore,  inequality \eqref{upldp} is automatically satisfied.
\end{enumerate}

\end{enumerate}

 For the lower large deviation bound \eqref{loldp}, we need to consider three cases according to $H$:
\begin{enumerate}[(1)]
\item If $g_{pp}(s, t) \in  H$, then $\P\{N^{-1}G_{\fl{Ns},\fl{Nt}}\in  H\} \to 1$ and \eqref{loldp} holds as an equality

\item If $H \subseteq [0, g_{pp}(s, t))$, \eqref{loldp} holds because its right-hand side is $-\infty$.

\item The remaining case is the one where $H$ contains an interval $(a, b) \subset (g_{pp}(s,t),s)$. 
 Then for any $\e>0$  small enough, we can find a non-trivial interval $[a + \e, b - \e] \subseteq H$ and bound
\begin{align}
N^{-1}\log &\P\{N^{-1} G_{\fl{Ns},\fl{Nt}}\in H \}\geq N^{-1}\log \P\{N^{-1} G_{\fl{Ns},\fl{Nt}}\in [a+\e,b-\e] \} \notag \\
&=N^{-1}\log \Big(\P\{N^{-1} G_{\fl{Ns},\fl{Nt}}\geq N(a+\e)\}-\P\{N^{-1} G_{\fl{Ns},\fl{Nt}}\geq N(b-\e)\Big) \notag\\
& \to-J_{s,t}(a+\e).\label{eq:nec2}
\end{align}
Equation \eqref{eq:nec2} follows after taking $\varliminf$ on both sides and keeping in mind that the two terms in the logarithm have different exponential orders of magnitude.  

Monotonicity and convexity $J_{s,t}$ on $[g_{pp}(s,t),s]$ implies that for some constant $C$,
$
J_{s, t}(a + \e) \leq J_{s,t} (a) + C\e.
$ 
Then, \eqref{eq:nec2} becomes 
\[
\varliminf_{N \to \infty} N^{-1}\log \P\{N^{-1} G_{\fl{Ns},\fl{Nt}}\in H \} \geq - J_{s,t} (a) - C\e
\]
Let $\e \to 0$ in the last display. Then take $a=\inf H\cap (g_{pp}(s,t),s)$ to finish using  
\[ J_{s,t}(a)=\inf_{r\in H\cap (g_{pp}(s,t),s)} I_{s,t}(r) = \inf_{r\in H} I_{s,t}(r). \qedhere \]
\end{enumerate} 
\end{proof}

\begin{corollary}\label{cor:lam}
Let $\xi\in\R$. Then 
\be\label{lamxi}
\lim_{N\to\infty}N^{-1}\log\E e^{\xi G_{0,(\fl{Ns},\fl{Nt})}} =I^*_{s,t}(\xi)=\begin{cases}
J_{s,t}^*(\xi)\quad &\text{if }\xi>0,\\
0\quad &\text{if }\xi=0,\\
\xi g_{pp}(s,t)\quad &\text{if }\xi<0,
\end{cases}.
\ee
\end{corollary}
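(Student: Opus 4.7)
The plan is to recognize this statement as an instance of Varadhan's lemma applied to the LDP established in Theorem \ref{thm:fullLDP}. The first key observation is that any admissible path from $(0,0)$ to $(\fl{Ns},\fl{Nt})$ takes exactly $\fl{Ns}$ horizontal steps, each of which contributes at most one to the passage time under the potential \eqref{eq:potential}. Thus $0 \le N^{-1} G_{\fl{Ns},\fl{Nt}} \le s$ deterministically, so the laws of $N^{-1} G_{\fl{Ns},\fl{Nt}}$ are supported on the compact interval $[0,s]$, and the exponential moment $\E e^{\xi G}$ is trivially bounded by $e^{|\xi| Ns}$ for every $N$.

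For the upper bound in Varadhan, I would partition $[0,s]$ into finitely many closed intervals $F_1,\dots,F_n$ of length at most $\e$ and write
\[
\E e^{\xi G_{\fl{Ns},\fl{Nt}}} \le \sum_{k=1}^n \exp\bigl(\xi N \sup F_k\bigr)\, \P\{N^{-1} G_{\fl{Ns},\fl{Nt}} \in F_k\}.
\]
Taking $N^{-1}\log$, using \eqref{upldp} on each $F_k$, and letting $\e \to 0$ via continuity of $I_{s,t}$ on its effective domain (see Theorems \ref{JInt} and \ref{thm:fullLDP}) yields $\varlimsup N^{-1}\log \E e^{\xi G} \le \sup_{r\in[0,s]} \{\xi r - I_{s,t}(r)\}$. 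For the lower bound, I would pick any $r^\star \in [g_{pp}(s,t),s]$ nearly attaining the supremum (this set is compact and $I_{s,t}$ is lower-semicontinuous there), and bound
\[
\E e^{\xi G_{\fl{Ns},\fl{Nt}}} \ge e^{\xi N(r^\star - \e)} \,\P\{N^{-1} G_{\fl{Ns},\fl{Nt}} \in (r^\star - \e, r^\star + \e)\},
\]
then apply the open-set bound \eqref{loldp} and let $\e \to 0$. Together these give the identity $\lim_N N^{-1}\log \E e^{\xi G_{\fl{Ns},\fl{Nt}}} = \sup_{r\in[0,s]}\{\xi r - I_{s,t}(r)\} =: I_{s,t}^*(\xi)$.

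It then remains to identify $I_{s,t}^*(\xi)$ in the three cases. Since $I_{s,t}(r)=\infty$ on $[0, g_{pp}(s,t))$, the supremum in the Legendre transform effectively runs over $[g_{pp}(s,t),s]$, where $I_{s,t}=J_{s,t}$ is non-negative, convex, strictly increasing, and vanishes at $r=g_{pp}(s,t)$. For $\xi=0$ the supremum is attained at $r=g_{pp}(s,t)$ with value $0$. For $\xi<0$ the map $r\mapsto \xi r - J_{s,t}(r)$ is strictly decreasing on $[g_{pp}(s,t),s]$, so the maximum is again at $r=g_{pp}(s,t)$ and equals $\xi g_{pp}(s,t)$. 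For $\xi>0$, the agreement $I_{s,t}^*(\xi)=J_{s,t}^*(\xi)$ needs the extra observation that, because $J_{s,t}(r)=0$ on $[0,g_{pp}(s,t)]$, the contribution of that interval to $J_{s,t}^*(\xi)$ is $\xi g_{pp}(s,t)$, which is precisely the value attained at the left endpoint $r=g_{pp}(s,t)$ inside the effective domain of $I_{s,t}$; hence enlarging the supremum from $[g_{pp}(s,t),s]$ to all of $\R$ does not change its value.

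The proof is essentially routine Varadhan applied to a bounded sequence, so the only non-trivial bookkeeping is the identification $I_{s,t}^*=J_{s,t}^*$ for positive $\xi$; this is what makes the statement slightly asymmetric in $\xi$. The compactness of the support removes the usual moment-condition hypothesis of Varadhan's lemma, and the continuity of $J_{s,t}$ up to the closure of its effective domain (Theorem \ref{JInt}) is exactly what is needed to interchange the limits $N\to\infty$ and $\e\to 0$ cleanly.
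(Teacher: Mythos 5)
Your proposal is correct and follows essentially the same route as the paper: the paper likewise uses the deterministic bound $G_{\fl{Ns},\fl{Nt}}\le Ns$ to verify the exponential-moment condition, applies Varadhan's theorem to the LDP of Theorem \ref{thm:fullLDP}, and then carries out the same three-case evaluation of the Legendre transform restricted to $[g_{pp}(s,t),s]$, including the observation that for $\xi>0$ enlarging the supremum to all of $\R$ does not change its value; you merely reprove Varadhan by hand via the compact partition argument instead of citing it. The only bookkeeping slip is that for $\xi<0$ your displayed bounds should use the opposite endpoints of the small intervals ($\inf F_k$ in place of $\sup F_k$, and $r^\star+\e$ in the lower bound), which affects nothing in the $\e\to0$ limit.
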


\begin{proof}[Proof of Corollary \ref{cor:lam}]
Since $G_{\fl{Ns},\fl{Nt}} \le Ns$, for any $\gamma > 1$ and $\xi\in \R$,
\[
\sup_{N}\Big(\E e^{\gamma\xi\, G_{\fl{Ns},\fl{Nt}}}\Big)^{1/N}<\infty.
\]
This bound together with Theorem \ref{thm:fullLDP} suffice to apply Varadhan's theorem (e.g.\ page 38 in \cite{LDP-Ras-Sep-16}) which gives
\begin{align*}
\lim_{N\to\infty}N^{-1}\log\E e^{\xi G_{0,(\fl{Ns},\fl{Nt})}}&=I^*_{s,t}(\xi)=\sup_{r\in \R}\{r\xi-I_{s,t}(r)\}\\
&=\sup_{r\in[g_{pp}(s,t),s]}\{r\xi-I_{s,t}(r)\}=\sup_{r\in[g_{pp}(s,t),s]}\{r\xi-J_{s,t}(r)\}.
\end{align*}
The first equality on the second line is because $I_{s,t}(r)=\infty$ if $r\in (-\infty,g_{pp}(s,t))$ or $r > s$ 
and there is no difference in excluding that interval from the supremum. 

Then we can compute $I^*_{s,t}$.  
$I_{s,t}$ is increasing for $r\in[g_{pp}(s,t),s]$, therefore if $\xi<0$, the supremum is always attained at $r=g_{pp}(s,t)$. 
Instead, when $\xi\geq0$, $I_{s,t}(\xi)^*=J^*_{s,t}(\xi)$ since $r$ can range over all of $\R$ and the last supremum will still be attained for some
$r \in [g_{pp}(s,t), s]$.
\end{proof}


\section{I.i.d.\ model: Right tail rate function and log moment generating function}
\label{sec:ldpwb}

The main goal of this section is to prove an explicit variational formula for the rate function $J_{s,t}(r)$. 
That formula, while precise does not enjoy enough analytical tractability to further obtain a closed formula. 
However, its dual $J^*_{s,t}(\xi)$ will be explicitly computed by the end of the section. The variational characterization
 of $J$ requires the log-moment generating functions 
for Bernoulli$(p)$ random variables, given by   
\be\label{eq:bercum}
C_{\mathcal B}^{(p)}(\xi) = \log(1 - p + p e^{\xi}), \xi \in \R.
\ee
and Geometric$(p)$ random variables given by 
\be\label{eq:geocum}
C_{\mathcal G}^{(p)}(\xi) = \begin{cases}
\log\frac{p}{1-(1-p)e^\xi}, &  \xi <-\log(1-p)\\
\infty, &\text{ otherwise}.
\end{cases}
\ee
Both log-moment generating functions can be seen as the Legendre duals of the rate functions  
for sums of i.i.d. Bernoulli \eqref{eq:bercum} and for sums of i.i.d. geometric random variables, given by 
\be\label{eq:georate}
I^{(p)}_\mathcal G(r)=\sup_{\xi<-\log(1-p)}\big\{r\xi-C_{\mathcal G}^{(p)}(\xi) \big\}=r\log\frac{r}{(1-p)(1+r)}-\log(1+r)p\qquad \text{for } r>0.
\ee
The two theorems that give the precise forms for 
$J$ and $J^*$ follow.

\begin{proposition}\label{propJ}
Let $(s,t)\in\R^2_+$. Then for all $\xi \in \R$, the convex dual $J^*_{(s,t)}(\xi)$ is given by
\[
J^*_{s,t}(\xi) = \begin{cases}
\inf_{u\in(p,1]}\{sC_\mathcal B^{(u)}(\xi)-tC_\mathcal G^{(\frac{u-p}{u(1-p)})}(-\xi)\},\quad &\text{if }\xi>0,\\
0,\quad &\text{if }\xi=0,\\
\infty, \quad &\text{if }\xi<0.
\end{cases}
\]
\end{proposition}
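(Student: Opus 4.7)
The plan is to split by the sign of $\xi$ and, in the main case $\xi>0$, obtain the envelope formula by bounding the MGF of $G_{m,n}$ against a one-parameter family of expressions coming from a coupling with the $u$-boundary model; each bound uses Burke's property to factor a Legendre piece cleanly, and the claim is the infimum of the family over $u$. The case $\xi=0$ is immediate from Corollary \ref{cor:lam}; for $\xi<0$, the lower-semicontinuous extension of $J_{s,t}$ vanishes on a half-line unbounded below (cf.\ Corollary \ref{thm:JJJ}), so $\sup_r\{r\xi-J_{s,t}(r)\}=+\infty$.

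For the coupling step with $\xi>0$, fix $u\in(p,1]$ and set $q=(u-p)/(u(1-p))$. I would realise the i.i.d.\ environment $\{\omega^{\mathrm{iid}}_v\}$ and the $u$-boundary environment $\{\omega^{(u)}_v\}$ on one probability space so that the two agree on the bulk $\{i,j\ge 1\}$; so that $\omega^{(u)}_{i,0}\ge \omega^{\mathrm{iid}}_{i,0}$ via the standard monotone Ber$(p)$-Ber$(u)$ coupling on the $x$-axis; and so that $\omega^{(u)}_{0,j}\sim\mathrm{Geom}(q)$ is chosen as a fresh random variable independent of $\omega^{\mathrm{iid}}$ on the $y$-axis. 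Two properties of this coupling are crucial. First, $G_{m,n}\le G^{(u)}_{m,n}$ a.s., since any path's i.i.d.\ payoff is dominated step by step by its boundary payoff under \eqref{eq:potential} (the i.i.d.\ potential collects nothing on $y$-axis vertical steps, so the Geom collection in the boundary model only helps). Second, $G_{m,n}$ is independent of $G^{(u)}_{0,n}=\sum_{j=1}^n\omega^{(u)}_{0,j}$, because $G_{m,n}$ is $\sigma(\omega^{\mathrm{iid}})$-measurable while the $y$-axis Geom boundary is fresh independent randomness; equivalently, the potential \eqref{eq:potential} never charges sites with first coordinate zero.

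For the upper bound via Burke, telescoping along the up-right path $0\to(0,n)\to(m,n)$ gives $G^{(u)}_{m,n}=G^{(u)}_{0,n}+\sum_{i=1}^m I^{(u)}_{i,n}$, and Corollary \ref{cor:downrightpath} applied to the horizontal leg of a down-right path through $(m,n)$ identifies $\{I^{(u)}_{i,n}\}_{i=1}^m$ as i.i.d.\ Ber$(u)$ with joint MGF $e^{mC^{(u)}_{\mathcal B}(\xi)}$. Combining with the coupling inequality yields the pointwise bound $G_{m,n}-G^{(u)}_{0,n}\le\sum_{i=1}^m I^{(u)}_{i,n}$; exponentiating (monotone for $\xi>0$), taking expectations, and using the independence above to factor the left-hand side gives
\[
\bE e^{\xi G_{m,n}}\cdot e^{\,n C^{(q)}_{\mathcal G}(-\xi)} \;\le\; e^{\,m C^{(u)}_{\mathcal B}(\xi)}.
\]
Setting $(m,n)=(\fl{Ns},\fl{Nt})$, taking $N^{-1}\log$ and $N\to\infty$ via Corollary \ref{cor:lam}, and infimizing over $u\in(p,1]$ delivers $J^*_{s,t}(\xi)\le\inf_{u\in(p,1]}\{sC^{(u)}_{\mathcal B}(\xi)-tC^{(q(u))}_{\mathcal G}(-\xi)\}$.

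For the reverse inequality, the plan is convex duality $J^*_{s,t}=J_{s,t}^{**}$ combined with a matching upper bound on the rate function: identify the optimising $u^*(\xi)$ attaining the envelope and construct, for each $r>g_{pp}(s,t)$, events of probability at least $e^{-N\Phi^*_{s,t}(r)+o(N)}$ on which $G_{m,n}\ge Nr$, where $\Phi_{s,t}(\xi)$ denotes the envelope established above. The natural route, in the spirit of \cite{Geo-Sep-13-} for the log-gamma polymer, is to place the boundary model with parameter $u^*$ in its characteristic direction at target level $r$ and let an atypical realisation of the Ber$(u)$-Geom$(q)$ boundary weights carry the entropic cost while the i.i.d.\ bulk realises a typical path. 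The hard part will be precisely this matching construction: the coupling above is one-sided, yielding $G\le G^{(u)}$ but not the reverse, so extracting a sharp lower bound on the i.i.d.\ right tail requires an independent argument exhibiting i.i.d.\ realisations that faithfully imitate the boundary-model optimiser along the characteristic direction.
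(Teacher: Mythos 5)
Your treatment of $\xi<0$, $\xi=0$, and the inequality $J^*_{s,t}(\xi)\le\inf_{u\in(p,1]}\{sC_{\mathcal B}^{(u)}(\xi)-tC_{\mathcal G}^{(\frac{u-p}{u(1-p)})}(-\xi)\}$ for $\xi>0$ is sound: the monotone coupling gives $G_{\fl{Ns},\fl{Nt}}\le G^{(u)}_{\fl{Ns},\fl{Nt}}$ pointwise, the telescoping $G^{(u)}_{m,n}=G^{(u)}_{0,n}+\sum_{i=1}^m I^{(u)}_{i,n}$ together with Corollary \ref{cor:downrightpath} makes the top-edge increments i.i.d.\ Ber$(u)$, the $y$-axis geometrics are independent of the i.i.d.\ field, and Corollary \ref{cor:lam} converts the resulting moment bound into a bound on $J^*$. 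This is a clean, genuinely different route to that half of the statement (the paper obtains the analogous one-sided control from inequality \eqref{ineq1}).

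The genuine gap is the reverse inequality, which you do not prove: you state a plan (duality plus a matching right-tail lower bound built from an atypical boundary realisation along the characteristic direction of $u^*(\xi)$, in the spirit of \cite{Geo-Sep-13-}) and yourself flag it as ``the hard part,'' but no construction is given, and your coupling cannot supply it since it is one-sided. This is exactly where the paper's machinery does the work, and it does so without any lower-bound path construction: the exit-point decomposition over \emph{all} entry points into the bulk yields the two-sided inequalities \eqref{ineq1}--\eqref{ineq2}, hence the identity of Lemma \ref{Rlem}, $sI^{(u)}_{\mathcal B}(r/s)=\inf_{-t\le a\le s}H^a_{s,t}(r)$ (the upper bound there needs the coarse-graining union bound and the regularity Lemma \ref{reg}); dualising the infimal convolution gives the family of \emph{equalities} \eqref{Rstara}, $sC^{(u)}_{\mathcal B}(\xi)=\sup_a\{\kappa^*_a(\xi)+J^*_{(s,t)-\bar{\mathbf v}(a)}(\xi)\}$, valid for every $u$ and every $(s,t)$ simultaneously; and Proposition \ref{prop:dousup}, which needs the continuity of $(s,t)\mapsto J^*_{s,t}(\xi)$ from Lemma \ref{GJ}, inverts this family to pin down $J^*$ as the stated infimum (with the flat-edge case $t\ge\frac{1-p}{p}s$ handled directly). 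Without either this inversion mechanism or an actually executed matching lower bound for $\P\{G_{\fl{Ns},\fl{Nt}}\ge Nr\}$, your argument only establishes one inequality and not the proposition.
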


The closed form for $J^*$ is given in the following 

\begin{theorem}\label{thm:lam}
Fix $p\in(0,1)$, $\xi\geq0$ and $(s,t)\in\R^2_+$. Define
\be
\Delta = \Delta_{p,s, t, \xi}=p(1-p)(e^\xi+e^{-\xi}-2)\big[p(1-p)(s+t)^2(e^\xi+e^{-\xi}-2)+4st\big].
\ee
Then, 
\begin{align}
&J^*_{s,t}(\xi)=\begin{cases}
\begin{aligned}
&s\log\frac{p(1-p)(s+t)(e^\xi+e^{-\xi}-2)+2s+\sqrt{\Delta}}{2s(1-p(1-e^{-\xi}))}\\
&\hspace{0.3cm}+t\log\frac{[p(1-p)(s+t)(e^\xi+e^{-\xi}-2)+\sqrt{\Delta}](1-p(1-e^{-\xi}))}{p(1-p)(t-s)(e^\xi+e^{-\xi}-2)+\sqrt{\Delta}},
\end{aligned} \quad &\text{if }t<\frac{1-p}{p}s,\\
 s\xi,\quad &\text{if }t\geq\frac{1-p}{p}s.
\end{cases}\label{exactform}
\end{align}
\end{theorem}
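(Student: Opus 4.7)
The plan is to start from the variational formula of Proposition \ref{propJ}, which for $\xi > 0$ expresses $J^*_{s,t}(\xi)$ as the infimum over $u \in (p, 1]$ of
$f(u) := s\,C_\mathcal B^{(u)}(\xi) - t\,C_\mathcal G^{(q(u))}(-\xi)$,
where $q(u) = \frac{u-p}{u(1-p)}$. Substituting \eqref{eq:bercum} and \eqref{eq:geocum}, and using the identity $1 - q(u) = \frac{p(1-u)}{u(1-p)}$, we obtain
$f(u) = s\log\bigl(1 + u(e^\xi - 1)\bigr) - t\log\frac{q(u)}{1 - (1-q(u))e^{-\xi}}$,
a fully explicit function of the single real parameter $u$.

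Next I would differentiate, set $f'(u) = 0$, and clear denominators. The resulting stationarity condition collapses (after routine simplification) to a \emph{quadratic} equation in $u$ whose coefficients are polynomials in $s, t, p, e^\xi, e^{-\xi}$. A direct computation shows that its discriminant is, up to a positive factor, precisely the quantity $\Delta$ in the statement; this is the algebraic origin of the $\sqrt{\Delta}$ appearing in \eqref{exactform}. Convexity of $f$ on $(p,1]$ (which follows from the convexity of $C_\mathcal B^{(u)}(\xi)$ in $u$ for $\xi > 0$ together with a direct check on the geometric contribution) ensures that the minimizer is unique: it is either the admissible root $u^\star$ of the quadratic lying in $(p,1)$, or the boundary point $u=1$.

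For the boundary case $u=1$ we have $q(1) = 1$ and $1-q(1)=0$, hence $C_\mathcal G^{(1)}(-\xi) = 0$ and $C_\mathcal B^{(1)}(\xi) = \xi$, yielding immediately $J^*_{s,t}(\xi) = s\xi$. To identify the threshold $t = \frac{1-p}{p}s$ as the correct dividing line, I would examine the sign of $f'(1)$: a short calculation shows $f'(1) \le 0$ exactly when $t \ge \frac{1-p}{p}s$, so the minimum is attained at $u^\star = 1$ in that regime, and at the interior critical point otherwise. As a sanity check, this agrees with the range of characteristic directions identified in the remark following Theorem \ref{thm:llnp}, where $u^\star = 1$ corresponds to the flat edge of the shape function.

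The main obstacle is the regime $t < \frac{1-p}{p}s$: here one must substitute the explicit root $u^\star$ back into $f(u)$ and simplify the two resulting logarithms into the asymmetric form displayed in \eqref{exactform}. This is essentially an extended algebraic manipulation, repeatedly exploiting the relations between $u^\star$, $e^\xi$, $e^{-\xi}$, and $\sqrt{\Delta}$ coming from the quadratic. The computation is lengthy but entirely mechanical, and I would relegate the detailed verification to the appendix (as the authors apparently do), while confirming the answer against two independent consistency checks: that $J^*_{s,t}(0) = 0$ (both branches vanish at $\xi=0$), and that the two branches agree continuously on the critical line $t = \frac{1-p}{p}s$, so that $J^*_{s,t}$ is a well-defined convex function of $\xi \ge 0$ as required by Corollary \ref{cor:lam}.
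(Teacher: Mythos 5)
Your overall route is the same as the paper's: start from the variational formula of Proposition \ref{propJ}, set the $u$-derivative of $f(u)=sC^{(u)}_{\mathcal B}(\xi)-tC^{(\frac{u-p}{u(1-p)})}_{\mathcal G}(-\xi)$ equal to zero, observe that after clearing denominators the stationarity condition is a quadratic in $u$ whose discriminant is (up to positive factors) the quantity $\Delta$, substitute the admissible root back into $f$ when $t<\frac{1-p}{p}s$, and read off $s\xi$ from the endpoint $u=1$ otherwise; the paper does exactly this, relegating the verification that the root lies in $(p,1]$ and is minimizing, together with the algebra, to Appendix \ref{sec:app1}. Your computation of the sign of $f'(1)=(1-e^{-\xi})\big(s-\frac{pt}{1-p}\big)$ as the criterion separating the two branches is correct and consistent with the paper.

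There is, however, a genuine flaw in your justification of global minimality. You claim $f$ is convex on $(p,1]$ "because $C^{(u)}_{\mathcal B}(\xi)$ is convex in $u$"; in fact $C^{(u)}_{\mathcal B}(\xi)=\log(1-u+ue^{\xi})$ is the logarithm of an affine function of $u$ and is therefore strictly \emph{concave} in $u$. Worse, the sum $f$ itself fails to be convex precisely in part of the regime you need: a direct computation gives $f''_{s,t}(1)=-s(1-e^{-\xi})^2+t\,p(1-e^{-\xi})\big(2-p(1-e^{-\xi})\big)/(1-p)^2$, which is negative whenever $t/s$ is small, i.e.\ well inside the region $t<\frac{1-p}{p}s$. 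So convexity cannot be the reason the stationary point is the unique global minimizer. The correct argument, which is the one carried out in Appendix \ref{sec:app1}, is a sign analysis of $f'(u)=N(u,\xi)/D(u,\xi)$: the denominator $D$ is positive on $(p,1]$, the numerator $N$ is an upward parabola whose smaller root does not exceed $p$ (consistent with $f(u)\to\infty$ as $u\searrow p$), so $f$ is strictly decreasing on $(p,u^*_+)$ and increasing on $(u^*_+,1]$; combined with your sign computation for $f'(1)$ this identifies the minimizer as the larger root $u^*_+\in(p,1)$ when $t<\frac{1-p}{p}s$ and as $u=1$ otherwise. With that replacement (and the mechanical back-substitution you defer, as the paper does), your proof coincides with the paper's.
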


\subsection{Exact computations for $J_{s,t}(r)$}
\label{subsec:ecom} 
We first present a series of key technical lemmas, and we encourage the reader familiar with these techniques to proceed to the proof of Proposition \ref{propJ}.

We will use the invariance property of the model with boundaries first. Consider the 
last passage time in the model with boundary $G^{(u)}_{\fl{Ns},\fl{Nt}}$ 
and we iteratively apply equation \eqref{eq:gradients} to obtain
\[ 
 G^{(u)}_{\fl{Ns},\fl{Nt}}-G^{(u)}_{0,\fl{Nt}}=\sum_{i=1}^{\fl{Ns}}I^{(u)}_{i,\fl{Nt}}.
\]
Focus on the left hand side. From equation \eqref{eq:varform} and \eqref{eq:gradients} we can write the previous difference as
\begin{align*}
\sum_{i=1}^{\fl{Ns}}I^{(u)}_{i,\fl{Nt}} &=G^{(u)}_{\fl{Ns},\fl{Nt}}-G^{(u)}_{0,\fl{Nt}}\\
&=\max_{1\leq k\leq \fl{Ns}}\Big\{\sum_{i=1}^kI^{(u)}_{i,0}+G_{(k,1),(\fl{Ns},\fl{Nt})}-\sum_{j=1}^{\fl{Nt}}J^{(u)}_{0,j}\Big\}\\
&\hspace{1cm}\bigvee\max_{1\leq k\leq \fl{Nt}}\Big\{\sum_{j=1}^kJ^{(u)}_{0,j}+\om_{1, k} + G_{(1,k),(\fl{Ns},\fl{Nt})}-\sum_{j=1}^{\fl{Nt}}J^{(u)}_{0,j}\Big\}\\
&=\max_{1\leq k\leq \fl{Ns}}\Big\{\sum_{i=1}^kI^{(u)}_{i,0}-\sum_{j=1}^{\fl{Nt}}J^{(u)}_{0,j}+G_{(k,1),(\fl{Ns},\fl{Nt})}\Big\}\\
&\hspace{2cm}\bigvee\max_{1\leq k\leq \fl{Nt}}\Big\{-\sum_{j=k+1}^{\fl{Nt}}J^{(u)}_{0,j}+ \om_{1, k} +G_{(1,k),(\fl{Ns},\fl{Nt})}\Big\}.
\end{align*}
To compactify notation we use a convention where the $y$-axis is labeled by negative indices and we define 
\be\label{etak}
\text{for }k\in\Z\qquad \eta_k=
\begin{cases}
-\sum_{j=-k+1}^{\fl{Nt}}J^{(u)}_{0,j}\qquad&k\leq0,\\
\sum_{i=1}^kI^{(u)}_{i,0}-\sum_{j=1}^{\fl{Nt}}J^{(u)}_{0,j}\qquad&k\geq1.
\end{cases}
\ee
As such, we can say that the last passage time can be obtained on path that enters the bulk $\N^2$ at a point $\mathbf{v}(z)$ defined by 
\be\label{vz}
\text{for }z\in\R\qquad \mathbf{v}(z)=
\begin{cases}
(1,\fl{-z})\qquad&z\leq-1,\\
(1,1)\qquad&-1<z<1,\\
(\fl{z},1)\qquad&z\geq1,\\
\end{cases}
\ee
and the gradient can be written as 
\[
\sum_{i=1}^{\fl{Ns}}I^{(u)}_{i,\fl{Nt}}=\max_{\fl{-Nt}\leq k\leq\fl{Ns},k\not=0}\big\{\eta_k+ \om_{{\bf v}(k)}\mathbbm 1\{ k<0 \}+ G_{{\bf v}(k),(\fl{Ns},\fl{Nt})}\big\}.
\]
Then the following inequalities are immediate:
\begin{align}\label{ineq1}
\eta_k&+G_{{\bf v}(k),(\fl{Ns},\fl{Nt})}\\
&\hspace{1cm}\leq \sum_{i=1}^{\fl{Ns}}I^{(u)}_{i,\fl{Nt}} \notag\\
&\hspace{2cm}\leq \max_{\fl{-Nt}\leq k\leq\fl{Ns},k\not=0}\{\eta_k+G_{{\bf v}(k),(\fl{Ns},\fl{Nt})}\}+1.\label{ineq2}
\end{align}
This inequality will be crucial for our purposes. We briefly discuss the main idea. 

The second line in the last display is a sum of i.i.d. Bernoulli, so it has a known large deviation rate function. A deviation for the $\sum I^{(u)}$ is controlled above and below by deviations for the expressions $ \eta_k +G_{{\bf v}(k),(\fl{Ns},\fl{Nt})}$. $\eta_k$ itself is either a sum of i.i.d.\ geometric random variables or a difference of two independent sums; in either case the large deviation rate function for $\eta_k$ is computable, and the only unknown will be the large deviation rate function for $G$ (albeit in a complicated expression). The subsection is devoted into following this program and solve for the rate function of  $G$.

It will be crucial to understand the function defined by 
\be\label{eq:Hst} 
H_{s,t}^{a,b}(r)=-\lim_{N\to\infty}N^{-1}\log\P\{\eta_{\fl{Na}}+G_{\mathbf{v}(Nb),(\fl{Ns},\fl{Nt})}\geq Nr\},
\ee
where $a, b \in [-t,s]$. 
We first argue why the limit exists. This fact will be a direct consequence of Lemma \ref{srf}, when we show that the $\eta_{\fl{Na}}$ and  $G_{\mathbf{v}(Nb),(\fl{Ns},\fl{Nt})}$ will have a right tail rate function. 

We begin by computing the rate function for the $\eta_k$.
For real $a\in[-t,s]$, and $r \in \R$ define 
\be
\kappa_a(r)=-\lim_{N\to\infty}N^{-1}\log\P\{\eta_{\fl{Na}}\geq Nr\}.
\ee
From \eqref{etak} we observe that if $k\leq0$ $\eta_k$ is a sum of i.i.d.\ geometric distributed random variables while if $k\geq1$, $\eta_k$ is the difference of two independent sums of i.i.d.\ random variables. 

The convex dual is
\be \label{kstar}
\begin{aligned}
\kappa^*_a(\xi)&=\sup_{r\in\R}\{\xi r-\kappa_a(r)\}\\
&=\begin{cases}
(t+a)\Big[\log\frac{u-p}{u(1-p)}-\log\Big(1-\frac{p(1-u)}{u(1-p)}e^{-\xi}\Big)\Big], \qquad\text{for }\xi>\log\frac{p(1-u)}{u(1-p)},-t\leq a \leq0,\\
t\Big[\log\frac{u-p}{u(1-p)}-\log\Big(1-\frac{p(1-u)}{u(1-p)}e^{-\xi}\Big)\Big]+a\log(ue^\xi+1-u), \\
\hspace{7.8cm}\text{for }\xi>\log\frac{p(1-u)}{u(1-p)},\,\,0< a \leq s,\\
\infty,  \hspace{7.3cm}\text{otherwise}.
\end{cases}\\
&=\begin{cases}
(t+a) C _{\mathcal G}^{(\frac{u - p}{u(1-p)})}(-\xi), &\text{for }\xi>\log\frac{p(1-u)}{u(1-p)},-t\leq a \leq0,\\
t C _{\mathcal G}^{(\frac{u - p}{u(1-p)})}(-\xi)+a C _{\mathcal B}^{(u)}(\xi), &\text{for }\xi>\log\frac{p(1-u)}{u(1-p)},\,\,0< a \leq s,\\
\infty,  &\text{otherwise}.
\end{cases}
\end{aligned}
\ee
The first line in \eqref{kstar} follows from Cramer's theorem when the random variables are geometric. The second line follows from Lemma \ref{srf} when 
$L_N =\sum_{i=1}^{\fl{Na}}I^{(u)}_{i,0} $ and $Z_N = -\sum_{j=1}^{\fl{Nt}}J^{(u)}_{0,j}$, and the fact that the dual of an infimal convolution is the sum of the corresponding duals.

\begin{remark} 
\label{rem:important}
The condition on $\xi$ can be stated equivalently in terms of $u$. In fact, if $\xi\in\R$ is fixed, the above inequality becomes $u>\frac{pe^{-\xi}}{1-p+pe^{-\xi}}$. Moreover if $\xi>0$, $\frac{pe^{-\xi}}{1-p+pe^{-\xi}}<p$ and so it remains $u\in(p,1]$. 
\end{remark}

The rightmost zero $m_{\kappa,a}$ of $\kappa_a$ is the law of large numbers limit
\be\label{mka}
m_{\kappa,a}=\lim_{N\to\infty}N^{-1} \eta_{\fl{Na}}=
\begin{cases}
-(t+a)\frac{u-p}{p(1-u)},\qquad&-t\leq a\leq 0,\\
au-t\frac{u-p}{p(1-u)},\qquad&0< a\leq s.
\end{cases}
\ee
Note that when viewed as functions of $a$,  $\kappa_a$, $\kappa_a^*$ and $m_{\kappa,a}$ are all continuous at $a=0$.

For the rate function of $G_{\mathbf{v}(Nb),(\fl{Ns},\fl{Nt})}$, we first introduce the equivalent macroscopic version of \eqref{vz} for 
$a  \in \R$, by 
\be
N^{-1}\mathbf{v}(Na)\to\bar{\mathbf{v}}(a)=
\begin{cases}
(0,-a),\qquad&-t\leq a \leq 0,\\
(a,0),\qquad& 0<a\leq s.
\end{cases}
\ee
With this notation, the rate function of the last past passage time in the interior is
\be\label{Jva}
J_{(s,t)-\bar{ \mathbf{v}}(a)}(r)=-\lim_{N\to\infty}N^{-1}\log\P\{G_{\mathbf{v}(Na),(\fl{Ns},\fl{Nt})}\geq Nr\}.
\ee
This is because $G_{\mathbf{v}(Na),(\fl{Ns},\fl{Nt})}$ equals in distribution $G_{(0,0), (\fl{Ns},\fl{Nt}) - \mathbf{v}(Nb)}$. There will be a small discrepancy between 
 $(\fl{Ns},\fl{Nt})-\mathbf{v}(Na)$ and $\fl{N ((s,t)-\bar{ \mathbf{v}}(a))}$ but  Lemma \ref{lem:1} proves that it is negligible in the limit.

Let $m_{\kappa,a}$ and $m_{J,b}$ be the rightmost zeros respectively of $\kappa_a$ (defined by  \eqref{mka}) and $J_{(s,t)-\bar{\mathbf{v}}(b)}$ 
(which equals $g_{pp}((s,t) - \bar{\mathbf{v}}(b))$. Using Lemma \ref{srf} for $(a,b)\in[-t,s]^2$, we have
\be\label{Hdef}
 H_{s,t}^{a,b}(r)=
\begin{cases}
0,\qquad &r<m_{\kappa,a}+m_{J,b},\\
\inf_{m_{\kappa,a}\leq x\leq r-m_{J,b}}\{\kappa_a(x)+J_{(s,t)-\bar{\mathbf{v}}(b)}(r-x)\},\qquad &m_{\kappa,a}+m_{J,b} \leq r\leq s.
\end{cases}
\ee
The following regularity lemma follows from the continuity properties we discussed up to this point, and the details are left to the reader.
\begin{lemma}\label{reg}
Fix $s,t\in(0,\infty)$ and fix any compact set $K \subseteq(-\infty,s]$. Then $ H_{s,t}^{a,b}(r)$ is a uniformly continuous function of $(b,r)\in[-t,s]\times K$, uniformly in $a\in[-t,s]$. In symbols
\be
\lim_{\delta \to 0}\sup_{\substack{a,b,b'\in[-t,s],r,r'\in K:\\|b-b'|\leq\delta,|r-r'|\leq\delta}}|H^{a,b}_{s,t}(r)-H^{a,b'}_{s,t}(r')|=0.
\ee
\end{lemma}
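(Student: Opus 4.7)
The plan is to show that $H^{a,b}_{s,t}(r)$ is jointly continuous on the compact product $[-t,s]^2 \times K$; since a continuous function on a compact set is uniformly continuous, the claimed vanishing of the supremum follows immediately (uniform continuity in $(b,r)$ uniformly in $a$ is a weakening of joint uniform continuity in $(a,b,r)$). Everything thus reduces to establishing joint continuity.

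The first step is to verify joint continuity of the two ingredients that enter the infimum. For $\kappa_a(x)$: the explicit Legendre dual \eqref{kstar} is jointly continuous in $(a,\xi)$ on $[-t,s]\times(\log\tfrac{p(1-u)}{u(1-p)},\infty)$, with the two pieces for $a \leq 0$ and $a > 0$ matching at $a=0$ because the Bernoulli contribution vanishes there; inverting the Legendre transform preserves joint continuity on the domain of finiteness, and $m_{\kappa,a}$ is continuous in $a$ by \eqref{mka}. For $J_{(s,t)-\bar{\mathbf{v}}(b)}(y)$: joint continuity in $(b,y)$ follows from Theorem \ref{JInt} (continuity of $J$ on $\bar A$) composed with the continuity of $b \mapsto \bar{\mathbf{v}}(b)$, while the rightmost zero $m_{J,b} = g_{pp}((s,t)-\bar{\mathbf{v}}(b))$ is continuous by Theorem \ref{thm:LLNp}.

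The second step is to apply Berge's maximum theorem to
\[
H^{a,b}_{s,t}(r) = \inf_{x \in \Gamma(a,b,r)} \bigl\{ \kappa_a(x) + J_{(s,t)-\bar{\mathbf{v}}(b)}(r-x) \bigr\}, \qquad \Gamma(a,b,r) = [m_{\kappa,a},\, r - m_{J,b}],
\]
on the open region $\{r > m_{\kappa,a}+m_{J,b}\}$: the objective is jointly continuous by Step 1, and $\Gamma$ is nonempty, compact-valued, and continuous there, so Berge gives joint continuity of $H$ on this region. On the complementary open region $\{r < m_{\kappa,a}+m_{J,b}\}$, $H \equiv 0$ is trivially continuous. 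Along the boundary $r = m_{\kappa,a}+m_{J,b}$, plugging $x = m_{\kappa,a}$ into the objective yields $\kappa_a(m_{\kappa,a}) + J_{(s,t)-\bar{\mathbf{v}}(b)}(m_{J,b}) = 0 + 0 = 0$, so the two branches agree; continuity across this boundary then follows from the continuity of $\kappa_a$ and $J_{(s,t)-\bar{\mathbf{v}}(b)}$ at their respective zeros.

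I expect the only real subtlety to be checking that $\Gamma$ is lower hemi-continuous (not just upper hemi-continuous) so Berge's theorem fully applies; this is routine since both endpoints of $\Gamma$ depend continuously on $(a,b,r)$, but it is the one place where care is needed, together with bookkeeping at the threshold $r = m_{\kappa,a}+m_{J,b}$. Once joint continuity on $[-t,s]^2 \times K$ is in hand, uniform continuity on this compact set is automatic and the stated estimate follows.
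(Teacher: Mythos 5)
The paper itself offers no written proof here (the details are explicitly left to the reader), so I can only assess your argument on its own terms — and it has a genuine gap. Your whole reduction rests on the claim that $H^{a,b}_{s,t}(r)$ is a (finite) jointly continuous function on all of $[-t,s]^2\times K$, with Berge's theorem supplying continuity on the region $\{r>m_{\kappa,a}+m_{J,b}\}$. That claim is false as stated. The deterministic upper bound for $N^{-1}\big(\eta_{\fl{Na}}+G_{\mathbf{v}(Nb),(\fl{Ns},\fl{Nt})}\big)$ is $(a\vee 0)+(s-b^+)$: indeed $\kappa_a(x)=\infty$ for $x>a\vee 0$ and $J_{(s,t)-\bar{\mathbf{v}}(b)}(y)=\infty$ for $y>s-b^+$, so whenever $r>(a\vee 0)+(s-b^+)$ the integrand in \eqref{Hdef} is $+\infty$ on the entire interval $[m_{\kappa,a},r-m_{J,b}]$ and $H^{a,b}_{s,t}(r)=+\infty$. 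This happens inside your "Berge region": as soon as $K$ contains some $r>0$, take $a\le 0$ and $b\in(s-r,s]$. Moreover the finite values of $H$ do not blow up as $r$ approaches this threshold (the boundary values of $\kappa_a$ and of $J$ at the right edge are finite, cf.\ Corollary \ref{thm:JJJ}), so $H$ has a genuine jump to $+\infty$ across the surface $r=(a\vee 0)+(s-b^+)$, which sits strictly above $m_{\kappa,a}+m_{J,b}$ whenever $b^+>a\vee 0$. Consequently the objective is not continuous (not even finite) on the graph of $\Gamma$, Berge's maximum theorem in the form you invoke does not apply, and the conclusion "jointly continuous on the compact product, hence uniformly continuous" cannot hold on the full range you claim. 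The subtlety is not the lower hemicontinuity of $\Gamma$, which you flag, but the $a,b$-dependent effective domains of $\kappa_a$ and $J_{(s,t)-\bar{\mathbf{v}}(b)}$.

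The argument is salvageable, but only after restricting to the set where $H$ is finite, i.e.\ $\{(a,b,r):r\le (a\vee 0)+(s-b^+)\}$ (note this covers every configuration actually used in the proof of Lemma \ref{Rlem}, where one always has $b^+\le a\vee 0$, so $H$ is finite for all $r\le s$ there). On that set you can run your scheme with the constraint correspondence replaced by the finiteness-restricted interval $[\,\max(m_{\kappa,a},\,r-(s-b^+)),\,\min(r-m_{J,b},\,a\vee 0)\,]$, whose endpoints are continuous in $(a,b,r)$; the delicate points are then exactly where this interval degenerates (at $r=(a\vee 0)+(s-b^+)$ and at $r=m_{\kappa,a}+m_{J,b}$), and the matching-of-branches argument you give at the lower threshold is fine. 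So: correct overall strategy in spirit, but the blanket joint-continuity claim and the unqualified appeal to Berge are a real gap; you must either restrict the domain as above or prove a one-sided/modulus estimate on the finiteness region, which is all that the application in Lemma \ref{Rlem} requires.
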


When $a=b$ we simplify the notation as $H^a_{s,t}(r)=H^{a,a}_{s,t}(r)$. Observe that at this point an expression involving  $J_{s,t}$ manifested on the right-hand side of  \eqref{Hdef}. Our goal is to invert the relation and isolate $J_{s,t}$. 

The next lemma is the continuous version of the discrete inequalities \eqref{ineq1}, \eqref{ineq2} at the level of the rate functions. 
\begin{lemma}\label{Rlem}
Let $s,t\in(0,\infty)$ and $r\in[0,s]$. Then
\be\label{Req}
sI_{\mathcal B}^{(u)}(r/s)=\inf_{-t\leq a\leq s}H^a_{s,t}(r).
\ee
\end{lemma}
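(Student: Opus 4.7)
I would apply $-N^{-1}\log\P(\,\cdot\,\ge Nr)$ to both sides of the sandwich inequalities \eqref{ineq1}--\eqref{ineq2} and pass to the limit $N\to\infty$. By Burke's property (Corollary \ref{cor:downrightpath}), the middle quantity $\sum_{i=1}^{\fl{Ns}}I^{(u)}_{i,\fl{Nt}}$ is a sum of $\fl{Ns}$ i.i.d.\ Bernoulli$(u)$ random variables, so Cram\'er's theorem gives
\[
-N^{-1}\log\P\Bigl(\,\sum_{i=1}^{\fl{Ns}}I^{(u)}_{i,\fl{Nt}}\ge Nr\Bigr)\xrightarrow[N\to\infty]{} sI^{(u)}_{\mathcal B}(r/s).
\]
This identifies the common limit that both directions of \eqref{Req} must match.

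\noindent\textbf{Lower bound $\inf_a H^a_{s,t}(r)\ge sI^{(u)}_{\mathcal B}(r/s)$.} For each $a\in[-t,s]$, inequality \eqref{ineq1} with $k=\fl{Na}$ gives the event inclusion $\{\eta_{\fl{Na}}+G_{\mathbf{v}(Na),(\fl{Ns},\fl{Nt})}\ge Nr\}\subseteq\{\sum I^{(u)}_{i,\fl{Nt}}\ge Nr\}$. Taking $-N^{-1}\log$ and letting $N\to\infty$, the definition \eqref{eq:Hst} together with the Cram\'er limit above yields $H^a_{s,t}(r)\ge sI^{(u)}_{\mathcal B}(r/s)$, and taking the infimum over $a$ gives the lower bound.

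\noindent\textbf{Upper bound $\inf_a H^a_{s,t}(r)\le sI^{(u)}_{\mathcal B}(r/s)$.} From \eqref{ineq2} and a union bound over the $O(N)$ indices $k$ in the max,
\[
\P\Bigl(\,\sum I^{(u)}_{i,\fl{Nt}}\ge Nr\Bigr)\le (\fl{Ns}+\fl{Nt}+1)\max_{k}\P\bigl(\eta_k+G_{\mathbf{v}(k),(\fl{Ns},\fl{Nt})}\ge Nr-1\bigr).
\]
Applying $-N^{-1}\log$ and using $N^{-1}\log(cN)\to 0$, one obtains
\[
\min_k f^N_k\le -N^{-1}\log\P\Bigl(\,\sum I^{(u)}_{i,\fl{Nt}}\ge Nr\Bigr)+o(1),
\]
where $f^N_k:=-N^{-1}\log\P(\eta_k+G_{\mathbf{v}(k),(\fl{Ns},\fl{Nt})}\ge Nr-1)$. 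The right-hand side tends to $sI^{(u)}_{\mathcal B}(r/s)$, so it suffices to show $\min_k f^N_k\to \inf_a H^a_{s,t}(r)$. The $\limsup$ direction is easy: for each fixed $a\in[-t,s]\setminus\{0\}$, choosing $k=\fl{Na}$ gives $\min_k f^N_k\le f^N_{\fl{Na}}\to H^a_{s,t}(r)$ (the ``$-1$'' shift is absorbed by continuity of $H^a_{s,t}$ in $r$), hence $\limsup_N\min_k f^N_k\le\inf_a H^a_{s,t}(r)$. For the matching $\liminf$, let $k^*_N$ be the argmin, set $a^*_N:=k^*_N/N\in[-t,s]$, and by compactness pass to a subsequence with $a^*_N\to a^*$. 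A perturbation estimate relating the probability at $k^*_N$ to the one at $\fl{Na^*}$ (the index shift $|k^*_N-\fl{Na^*}|=o(N)$ perturbs $\eta_k$ and $G_{\mathbf{v}(k)}$ by $o(N)$ with overwhelming probability, using independence of the boundary increments and translation/monotonicity for $G$), combined with the uniform continuity of $H^{a,b}_{s,t}$ from Lemma \ref{reg}, yields $\liminf_N f^N_{k^*_N}\ge H^{a^*}_{s,t}(r)\ge\inf_a H^a_{s,t}(r)$. This closes the chain of inequalities and gives the upper bound.

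\noindent\textbf{Main obstacle.} The nontrivial step is the last one: promoting the pointwise convergence $f^N_{\fl{Na}}\to H^a_{s,t}(r)$ to convergence along a varying minimizer $k^*_N$ with only a subsequential limit. Lemma \ref{reg} controls the continuity of the limiting rate function, while the pre-limit comparison between the probabilities at $k^*_N$ and at $\fl{Na^*}$ is what requires a genuine argument; all other ingredients are fairly standard once Burke's property is in hand.
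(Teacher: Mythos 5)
Your overall skeleton is the paper's: sandwich \eqref{ineq1}--\eqref{ineq2}, identify the middle term as a sum of i.i.d.\ Bernoulli$(u)$ via Burke's property and Cram\'er, and close with the regularity Lemma \ref{reg}. Your lower bound ($\inf_a H^a_{s,t}(r)\ge sI^{(u)}_{\mathcal B}(r/s)$) is exactly the paper's first half. The gap is in the key step of the other direction. Your ``perturbation estimate'' --- that the index shift $|k^*_N-\fl{Na^*}|=o(N)$ changes $\eta_k$ and $G_{\mathbf{v}(k)}$ by $o(N)$ \emph{with overwhelming probability} --- does not suffice as stated. On the negative side of \eqref{etak} the increments are unbounded geometrics, so bounding the shift by $\e N$ fails on an event of probability of order $e^{-c\e N}$ with $c\propto\e$; in a lower bound for $f^N_{k^*_N}=-N^{-1}\log\P(\cdots\ge Nr-1)$ this additive error event is \emph{not} negligible against the target probability $e^{-NH}$ unless $c\e$ exceeds the (fixed, possibly large) rate $H$, and since $c\e\to0$ as $\e\to0$ you cannot close the argument by shrinking the window: you only obtain $\liminf_N f^N_{k^*_N}\ge\sup_{\e}\min\{H^{a^*}_{s,t}(r-2\e),c\e\}$, which is in general strictly smaller than what the lemma requires.

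The fix --- and this is precisely what the paper does --- is to avoid any probabilistic perturbation and use the \emph{deterministic} monotonicity built into \eqref{etak} and \eqref{vz}: on each side of $k=0$, $\eta_k$ is monotone in $k$ and $G_{\mathbf{v}(k),(\fl{Ns},\fl{Nt})}$ is monotone in the entry point, so every $k$ in a window $[\fl{Na_i},\fl{Na_{i+1}}]$ satisfies, pathwise, $\eta_k+G_{\mathbf{v}(k)}\le \eta_{k_1}+G_{\mathbf{v}(k_2)}$ with $k_1,k_2$ the appropriate endpoints of the window. This is why the two-index function $H^{a,b}_{s,t}$ of \eqref{eq:Hst} and the uniform continuity in Lemma \ref{reg} are set up: the paper partitions $[-t,s]$ into finitely many $\delta$-cells, dominates each cell by a single two-index probability, union-bounds over the $O(\delta^{-1})$ cells, and lets $\delta\to0$, $\e\to0$ using Lemma \ref{reg}. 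Your union bound over all $O(N)$ indices followed by localizing the argmin is a harmless repackaging, but the comparison at the argmin must be done by this deterministic sandwich (yielding $f^N_{k^*_N}\ge -N^{-1}\log\P\{\eta_{k_1}+G_{\mathbf{v}(k_2)}\ge N(r-\e)\}\to H^{b_1,b_2}_{s,t}(r-\e)$ with $|b_1-b_2|\le2\delta$), not by an ``$o(N)$ with overwhelming probability'' estimate. With that replacement your argument becomes the paper's coarse-graining applied to a single cell around the subsequential limit $a^*$, and it is complete.
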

\begin{proof}
For any $a\in[-t,s]$, by \eqref{ineq1}
\begin{align*}
-sI_{\mathcal B}^{(u)}(r/s)&=\lim_{N\to\infty}N^{-1}\log\P\Big\{\sum_{i=1}^{\fl{Ns}}I^{(u)}_{i,\fl{Nt}}\geq Nr\Big\}\\
&\geq \lim_{N\to\infty}N^{-1}\log\P\{\eta_{\fl{Na}}+G_{\mathbf{v}(\fl{Na}),(\fl{Ns},\fl{Nt})}\geq Nr\}\\
&=-H^a_{s,t}(r).
\end{align*}
This is true for an arbitrary $a$, therefore
\be
sI_{\mathcal B}^{(u)}(r/s) \leq \inf_{-t\leq a\leq s}H^a_{s,t}(r).
\ee

To get the lower bound we use \eqref{ineq2} together with a coarse graining argument.

We begin describing the partition which will be helpful when we will use  \eqref{ineq2}. Fix a small enough $\delta>0$ to partition the interval $[-t,s]$. In particular, define 
 $-t=a_0<a_1<\dots<a_q=0<\dots<a_m=s$ where $|a_{i+1}-a_i|<\delta$. Moreover, we fix an $\e > 0$ and we assume that $N$ is large enough so that $N\e >1$.

When  $a_i\geq0$, for any  $k\in[\fl{Na_i},\fl{Na_{i+1}}]\cap \Z$,
\begin{align*}
\P\{\eta_{k}+G_{\mathbf{v}(k),(\fl{Ns},\fl{Nt})}\geq Nr\} \leq\P\Big\{ \eta_{\fl{Na_{i+1}}}+G_{\mathbf{v}(\fl{Na_{i}}),(\fl{Ns},\fl{Nt})}\geq Nr\Big\}. 
\end{align*}
Similarly, when $a_i<0$ and $\fl{Na_{i}}<k \leq\fl{Na_{i+1}}$ the bound becomes
\begin{align*}
\P\{\eta_{ k}+G_{\mathbf{v}(k),(\fl{Ns},\fl{Nt})}\geq Nr\}\leq\P\Big\{ \eta_{\fl{Na_{i}}}+G_{\mathbf{v}(\fl{Na_{i+1}}),(\fl{Ns},\fl{Nt})}\geq Nr\Big\}.
\end{align*}
From \eqref{ineq2} we bound 
\begin{align*}
 \P\Big\{\sum_{i=1}^{\fl{Ns}}I_{i,\fl{Nt}}\geq Nr\Big\}&\leq \P\Big\{\max_{\substack{\fl{-Nt}\leq k\leq\fl{Ns},\\k\not=0}}\{\eta_k+G_{\mathbf{v}(k),(\fl{Ns},\fl{Nt})}\} +1 \geq Nr\Big\}\\
&\leq \P\Big\{\max_{\substack{\fl{-Nt}\leq k\leq\fl{Ns},\\k\not=0}}\{\eta_k+G_{\mathbf{v}(k),(\fl{Ns},\fl{Nt})}\} \geq N(r-\e)\Big\}.
\end{align*}
Take logarithm on both sides and divide by $N$ and use a union bound to obtain
\begin{align*}
N^{-1}&\log \P\Big\{\sum_{i=1}^{\fl{Ns}}I_{i,\fl{Nt}}\geq Nr\Big\} \\
&\leq N^{-1}\log m + \Big\{\max_{\substack{0\leq i\leq q-1}}\Big\{N^{-1}\log\P\{\eta_{\fl{Na_{i}}}+G_{\mathbf{v}(\fl{Na_{i+1}}),(\fl{Ns},\fl{Nt})}\geq N(r-\e)\}\Big\} \Big\}\\
&\hspace{2.3cm}\vee \Big\{\max_{\substack{q\leq i\leq m-1}}\Big\{N^{-1}\log\P\{\{\eta_{\fl{Na_{i+1}}}+G_{\mathbf{v}(\fl{Na_{i}}),(\fl{Ns},\fl{Nt})}\geq N(r-\e)\}\Big\}.
\end{align*}
Take $N\to\infty$ to get
\begin{align*}
-sI_{\mathcal B}^{(u)}(r/s) &\leq \Big\{\max_{\substack{0\leq i\leq q-1}}\{-H_{s,t}^{a_{i},a_{i+1}}(r-\e)\}\Big\}\vee \Big\{\max_{\substack{q\leq i\leq m-1}}\{-H_{s,t}^{a_{i+1},a_i}(r-\e)\}\Big\}\\
&\leq\sup_{a,b\in[-t,s]:|a-b|\leq\delta}\{-H_{s,t}^{a,b}(r-\e)\}.
\end{align*}
Use Lemma \ref{reg} by letting $\delta \to 0$; this also implies $b \to a$. Then let $\e \to 0$. 
\end{proof}

The following lemma is the last technical tool we need in order to finally solve \eqref{Req} for the unknown rate function $J$. It proves convexity and lower semi-continuity of the Legendre dual of $J$. 

\begin{lemma}\label{GJ}
For a fixed $\xi\in\R_+$, the function $J^*_{s,t}(\xi)$, as a function of $(s,t)$,
is continuous and finite on $\R^2_+$.
\end{lemma}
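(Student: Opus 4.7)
The plan is to dispatch finiteness by a trivial deterministic bound on $G$, and then obtain continuity via a concavity argument inherited from the joint convexity of $J_{s,t}(r)$ that was established in Theorem \ref{JInt}, with a short separate argument needed at the boundary of $\R^2_+$.

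Finiteness is immediate. Since $0 \le G_{\fl{Ns},\fl{Nt}} \le \fl{Ns}$, and $J_{s,t}$ attains its zero at $r = g_{pp}(s,t)\in[0,s]$ while $J_{s,t}(r)=\infty$ for $r>s$, the variational definition $J^*_{s,t}(\xi)=\sup_r\{r\xi-J_{s,t}(r)\}$ gives
\[
0 \;\le\; g_{pp}(s,t)\xi \;\le\; J^*_{s,t}(\xi) \;\le\; s\xi \;<\; \infty
\]
for every $\xi \ge 0$ and $(s,t) \in \R^2_+$.

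For continuity, the key observation is concavity of $J^*_{s,t}(\xi)$ in $(s,t)$. Theorem \ref{JInt} asserts that the lower-semicontinuous extension $J_{s,t}(r)$ is jointly convex in the triple $((s,t),r)$. Hence, for fixed $\xi\ge 0$, the map $(s,t,r)\mapsto r\xi - J_{s,t}(r)$ is jointly concave, and taking the partial supremum over $r$ preserves concavity in the remaining variables. A finite concave function on an open convex set is automatically continuous (Rockafellar, Thm.\ 10.1), so continuity on the interior $(0,\infty)^2$ follows at once.

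Boundary continuity splits into lower and upper semicontinuity. Lower semicontinuity is a direct consequence of concavity and nonnegativity: for $(s_n,t_n) \to (s_0,t_0) \in \partial \R^2_+$ with $(s_n,t_n)\in\R^2_+$, one can write $(s_n,t_n) = \lambda_n(s_0,t_0) + (1-\lambda_n)(\tilde s_n,\tilde t_n)$ with $\lambda_n \to 1$ and $(\tilde s_n,\tilde t_n)\in\R^2_+$; concavity combined with $J^*\ge 0$ then yields
\[
J^*_{s_n,t_n}(\xi) \;\ge\; \lambda_n J^*_{s_0,t_0}(\xi) + (1-\lambda_n)J^*_{\tilde s_n,\tilde t_n}(\xi) \;\ge\; \lambda_n J^*_{s_0,t_0}(\xi),
\]
whence $\liminf J^*_{s_n,t_n}(\xi) \ge J^*_{s_0,t_0}(\xi)$. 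Upper semicontinuity is handled by the crude union bound
\[
\E\bigl[e^{\xi G_{\fl{Ns},\fl{Nt}}}\bigr] \;\le\; \binom{\fl{Ns}+\fl{Nt}}{\fl{Nt}}\bigl(1-p+pe^\xi\bigr)^{\fl{Ns}},
\]
valid because along any fixed admissible path the $\fl{Ns}$ collected weights sit at distinct columns and are therefore independent Bernoulli$(p)$. Taking $N^{-1}\log$ and letting $N\to\infty$ produces the continuous upper bound
\[
J^*_{s,t}(\xi) \;\le\; (s+t)\log(s+t) - s\log s - t\log t + sC_{\mathcal B}^{(p)}(\xi)
\]
on $\R^2_+$ (with the convention $0\log 0:=0$), whose right-hand side agrees with $J^*_{s,0}(\xi) = sC_{\mathcal B}^{(p)}(\xi)$ at $t=0$ and with $J^*_{0,t}(\xi)=0$ at $s=0$, giving $\limsup J^*_{s_n,t_n}(\xi) \le J^*_{s_0,t_0}(\xi)$ on the boundary.

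The one place the argument is genuinely delicate is upper semicontinuity at the horizontal axis $t=0^+$, since monotonicity of $G$ in $t$ points the wrong way; the union-bound estimate above is the crucial ingredient, because the path-entropy $(s+t)\log(s+t)-s\log s - t\log t$ vanishes as $t\to 0^+$, leaving exactly the Bernoulli log-m.g.f.\ $s C_{\mathcal B}^{(p)}(\xi)$ that gives the correct boundary value.
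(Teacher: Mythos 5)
Your argument is correct, and its core coincides with the paper's proof: finiteness comes from the bound $J^*_{s,t}(\xi)\le s\xi$ (the paper gets the same bound by restricting the supremum to $r\in[g_{pp}(s,t),s]$), and continuity is reduced to concavity of $(s,t)\mapsto J^*_{s,t}(\xi)$, which the paper derives by an explicit convex-combination computation and you obtain more slickly by noting that partial maximization in $r$ of the jointly concave map $(s,t,r)\mapsto \xi r-J_{s,t}(r)$ preserves concavity. Where you genuinely go beyond the paper is at the boundary of $\R^2_+$: the paper stops at ``finite concave implies continuous'', which via Rockafellar covers only the interior, whereas you add lower semicontinuity by the segment/concavity argument (legitimate, since a finite concave function on a polyhedral set can only fail to be \emph{upper} semicontinuous at the boundary) and upper semicontinuity by the path-counting bound $\E e^{\xi G_{\fl{Ns},\fl{Nt}}}\le\binom{\fl{Ns}+\fl{Nt}}{\fl{Nt}}(1-p+pe^\xi)^{\fl{Ns}}$, whose $N^{-1}\log$ limit is a continuous majorant of $J^*$ that matches the boundary values $sC^{(p)}_{\mathcal B}(\xi)$ and $0$ recorded in Corollary \ref{thm:JJJ}. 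Both boundary steps check out; the one link you should make explicit is how the moment bound transfers to $J^*$ — either by the exponential Chebyshev inequality applied to the limit \eqref{J}, or by quoting Corollary \ref{cor:lam}, both of which precede this lemma. A shorter alternative to the union bound, if you want it, is to use the continuity of the extension of $J$ on $\bar A$ from Theorem \ref{JInt} together with $J^*_{s,t}(\xi)=\sup_{r\in[0,s]}\{\xi r-J_{s,t}(r)\}$: a supremum of a jointly continuous function over the continuously varying compact interval $[0,s]$ is continuous on all of $\R^2_+$, boundary included.
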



\begin{proof}
By definition 
$J^*_{s,t}(\xi)= \sup_{r\in \R}\{\xi r-J_{s,t}(r)\},$ 
but, since $J_{s,t}(r) = \infty$ for $r > s$, and  $J_{s,t}(r) = 0$ for $r < g_{pp}(s,t)$, we can write for $\xi \ge 0$ that 
\[ J^*_{s,t}(\xi)= \sup_{r\in [g_{pp}(s,t), s]}\{\xi r-J_{s,t}(r)\}. \]
Then it is immediate to see that 
\[ J^*_{s,t}(\xi) \le \xi s, \quad \text{for all} (s,t) \in \R^2_+.\]

Continuity will follow once we prove that $M_\xi(s,t)$ is a concave finite function. Let $\lambda\in(0,1)$ and $(s,t)=\lambda(s_1, t_1)+(1-\lambda)(s_2, t_2)$ 
for some $(s_i, t_i) \in \R^2_+$. 
Recall that $J$ is convex and lower-semicontinuous in $(s,t,r)$ from Theorem \ref{JInt}. Write $r$ as the convex combination  $r=\lambda r_1+(1-\lambda)r_2$ for some  $r_1, r_2\in \R$.  By convexity 
 \begin{align*}
&\inf_{r\in \R}\{J_{s,t}(r)-\xi r\}\\
&\hspace{0.2cm}\leq\inf_{r\in \R}\Big\{\inf_{\substack{(r_1,r_2): \lambda r_1+(1-\lambda)r_2=r}}\{\lambda(J_{s_1,t_1}(r_1)-\xi r_1)+(1-\lambda)(J_{s_2,t_2}(r_2)-\xi r_2)\}\Big\}\\
&\hspace{0.2cm}=\inf_{\substack{(r_1,r_2)\in \R^2}}\{\lambda(J_{s_1,t_1}(r_1)-\xi r_1)+(1-\lambda)(J_{s_2,t_2}(r_2)-\xi r_2)\}\\
&\hspace{0.2cm}=\lambda\inf_{\substack{r_1\in \R]}}\{J_{s_1,t_1}(r_1)-\xi r_1\}+(1-\lambda)\inf_{\substack{r_2\in \R}}\{J_{s_2,t_2}(r_2)-\xi r_2\}\\
&\hspace{0.2cm}=-\lambda J^*_{s_1,t_1}(\xi)-(1-\lambda)J^*_{s_2,t_2}(\xi).
\end{align*}
In the end we have
\[J^*_{s,t}(\xi)\geq \lambda J^*_{s_1,t_1}(\xi)+(1-\lambda)J^*_{s_2,t_2}(\xi),\]
which is enough to prove the concavity of $J^*_{s,t}(\xi)$ in $(s,t)$.  
\end{proof}

\begin{figure}
\begin{center}
\includegraphics[height=4.1cm]{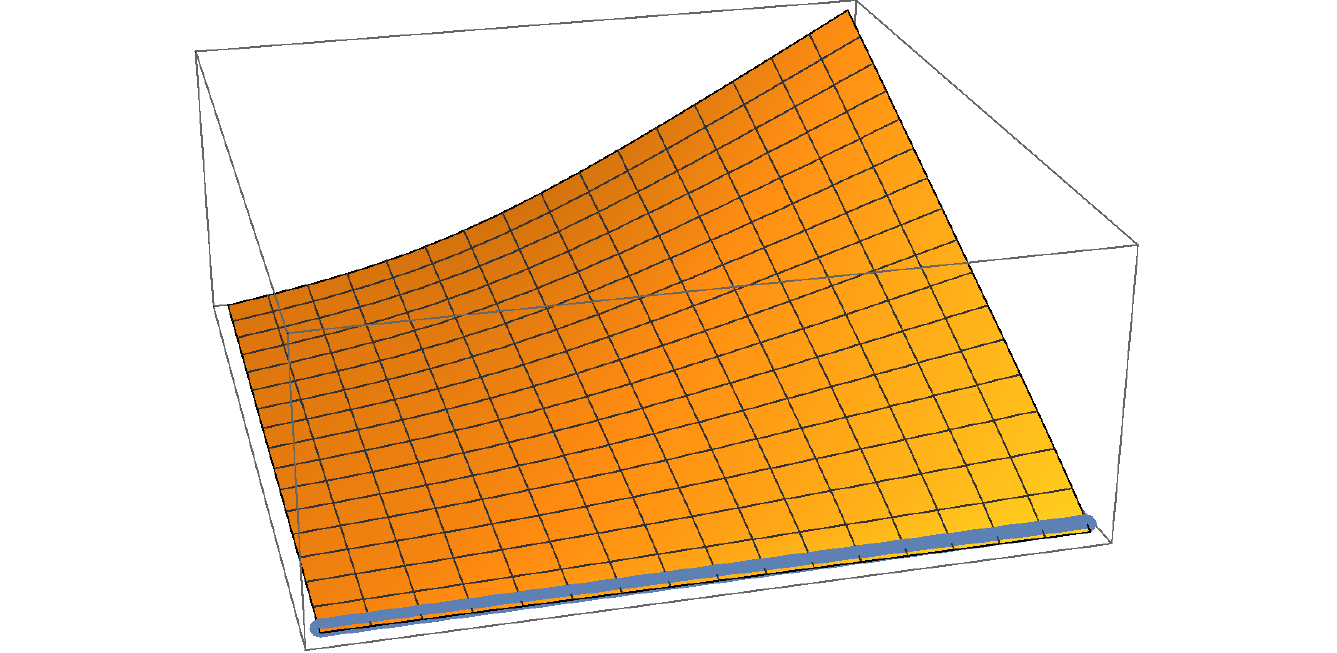}
\hspace{0cm}
\includegraphics[height=3.4cm]{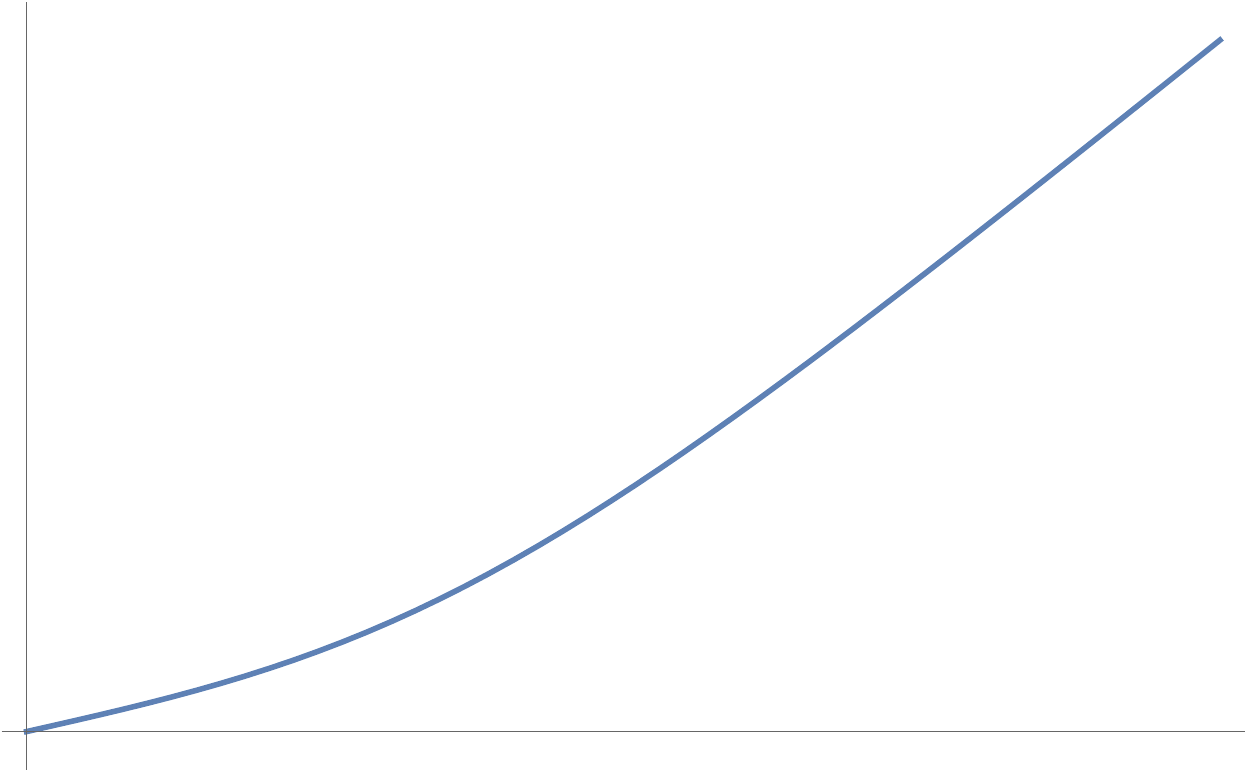}
\end{center}
\caption{Graphical representation of the function $J^*_{s,t}(r)$. In both figures we used $p=0.1$ and $t=1$. To the left we have $J^*_{s,1}(\xi)$ as a function of $(s,\xi)$ and one see the directions of convexity when $s$ is fixed and $\xi$ varies, and the direction of concavity  ranges when $\xi$ is fixed and $s$ varies as described in the proof of Lemma \ref{GJ}. The blue line is at $s = 1/9$ which the is characteristic point for  $p=0.1$ and $t=1$. For smaller $s$, $J^*_{s,1}(\xi) = s\xi$.  To the right is the convex continuous function $J_{10,1}^*(\xi)$.}
\end{figure}

In order to prove Proposition \ref{propJ} we need the following technical result. This is in the spirit of Proposition 3.10 in \cite{Ja-15} but tailored to our particular case. For this reason we postpone the proof until  Appendix \ref{app:conv}.

\begin{proposition}\label{prop:dousup}
Let $I=(a,b]\subseteq \R$ with $a,b\in \R$. Let the convex functions $h,g$: $I\to \R$ be twice continuously differentiable with $h'(u)>0$ and $g'(u)<0$ for every $u\in I$. Define 
\[
f_{s,t}(u)=sh(u)+tg(u)\quad \text{with }(s,t)\in \R_+^2.
\]
Suppose that $f''_{s,t}(u)>0$ for all $(s,t)\in \R_+^2$, $\lim_{u\searrow a}f_{s,t}(u)=\infty$ and $f_{s,t}(b)=c<\infty$ with $c\in \R$. If $\Lambda(s,t)$ is a continuous function in $(s,t)$ with the property that for all $(s,t)\in \R_+^2$ and $u\in I$ the identity
\be\label{ii}
0=\sup_{0\leq z \leq s}\{\Lambda(s-z,t)-f_{s-z,t}(u)\}\vee\sup_{0\leq \tilde{z} \leq t}\{\Lambda(s,t-\tilde{z})-f_{s,t-\tilde{z}}(u)\}
\ee
holds, then for every $ t<-\frac{h'(b)}{g'(b)}s$,
\[
\Lambda(s,t)=\min_{u\in I}\{f_{s,t}(u)\}.
\]
\end{proposition}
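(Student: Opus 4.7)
The plan is to prove the two inequalities $\Lambda(s,t) \le \min_{u \in I} f_{s,t}(u)$ and $\Lambda(s,t) \ge \min_{u \in I} f_{s,t}(u)$ separately; the upper bound is cheap and the lower bound uses the assumed identity \eqref{ii} evaluated at the minimizer $u^{*}$ of $f_{s,t}$. The upper bound follows at once by setting $z = 0$ in the first supremum in \eqref{ii}, which gives $\Lambda(s,t) - f_{s,t}(u) \le 0$ for every $u \in I$; hence $\Lambda(s,t) \le \inf_{u \in I} f_{s,t}(u)$, and the infimum is attained since $f_{s,t}$ is continuous, strictly convex, blows up as $u \searrow a$, and is finite at $u = b$.

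For the lower bound, let $u^{*} = u^{*}(s,t) \in I$ denote the unique minimizer of $f_{s,t}$. The condition $t < -\tfrac{h'(b)}{g'(b)} s$ gives $f'_{s,t}(b) = s h'(b) + t g'(b) > 0$, so by strict convexity the minimizer $u^{*}$ lies in the open interval $(a,b)$ and satisfies the first-order condition $s h'(u^{*}) + t g'(u^{*}) = 0$. Now apply \eqref{ii} at $(s,t,u^{*})$: since the maximum of the two suprema equals $0$ and both are $\le 0$, at least one of them is equal to $0$, and by compactness of $[0,s]$ (resp.\ $[0,t]$) and continuity of $\Lambda$ and $f$ in $(s,t)$ the corresponding supremum is attained. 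Suppose it is the first, at some $z^{*} \in [0,s]$, so that $\Lambda(s - z^{*}, t) = f_{s - z^{*}, t}(u^{*})$. Combining with the already-established upper bound $\Lambda(s - z^{*}, t) \le f_{s - z^{*}, t}(u)$ for every $u \in I$ shows that $u^{*}$ is a minimizer of the strictly convex function $f_{s - z^{*}, t}$ on $I$. Because $u^{*}$ is interior to $I$, it must satisfy $(s - z^{*}) h'(u^{*}) + t g'(u^{*}) = 0$; subtracting this from $s h'(u^{*}) + t g'(u^{*}) = 0$ yields $z^{*} h'(u^{*}) = 0$, and the hypothesis $h' > 0$ forces $z^{*} = 0$. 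Therefore $\Lambda(s,t) = f_{s,t}(u^{*}) = \min_{u \in I} f_{s,t}(u)$. The symmetric case, in which the second supremum attains $0$ at some $\tilde z^{*} \in [0,t]$, is handled identically using $g'(u^{*}) < 0$ to conclude $\tilde z^{*} = 0$.

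The only real obstacle is the step that promotes the equality $\Lambda(s - z^{*}, t) = f_{s - z^{*}, t}(u^{*})$ into $z^{*} = 0$: one must use the upper bound to upgrade $u^{*}$ from being merely a stationary point of $f_{s,t}$ into a genuine minimizer of the perturbed functional $f_{s - z^{*}, t}$, and then exploit the strict sign of $h'$ (or $g'$) to rule out a nontrivial perturbation. The role of the direction condition $t < -\tfrac{h'(b)}{g'(b)} s$ is solely to guarantee that $u^{*}$ is interior, which is what makes the first-order condition available in the perturbed problem.
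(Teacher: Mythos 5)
Your proof is correct, and it takes a genuinely different and more economical route than the paper's. The paper fixes $u=u^*_{s,t}$ in \eqref{ii} and then analyses the ``gap'' $f_{s-z,t}(u^*_{s-z,t})-f_{s-z,t}(u^*_{s,t})$ as a function of $z$: it invokes the implicit function theorem to show the perturbed minimizer $u^*_{1,\nu}$ is monotone in the direction $\nu=t/s$, splits into cases according to whether $(s-z,t)$ lies above or below the critical line $t=-\tfrac{h'(b)}{g'(b)}s$ (where the perturbed minimizer sits at the endpoint $b$), and shows by a derivative-in-$z$ computation that this gap is strictly negative and decreasing for $z>0$, so that the supremum in \eqref{ii} can only vanish in the limit $z\to 0$. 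You bypass all of this: from attainment of the vanishing supremum at some $z^*$ (legitimate, since $\Lambda$ is continuous and $z\mapsto f_{s-z,t}(u^*)$ is affine on a compact interval) and the trivial bound $\Lambda(s',t')\le f_{s',t'}(u)$ obtained from the $z=0$ term of \eqref{ii} at $(s',t')=(s-z^*,t)$, you conclude that the \emph{fixed} interior point $u^*$ is a global minimizer of the perturbed convex functional $f_{s-z^*,t}$, hence stationary there; linearity of $f_{s,t}$ in $(s,t)$ then gives $z^*h'(u^*)=0$, and $h'>0$ (respectively $g'<0$ in the symmetric case) forces $z^*=0$. This neatly localises the role of the hypothesis $t<-\tfrac{h'(b)}{g'(b)}s$ to guaranteeing interiority of $u^*$, needs only convexity and differentiability (not the second-derivative computations), and automatically disposes of the ``above the critical line'' case, since stationarity of $u^*$ for $f_{s-z^*,t}$ is impossible there. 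The paper's longer argument yields some extra structural information (monotonicity of the perturbed minimizer and strict negativity of the gap away from $z=0$) that your argument does not, but none of it is needed for the stated conclusion.
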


Using this we can now find a variational expression for $J^*$.

\begin{proof}[Proof of Proposition \ref{propJ}]

If $\xi<0$, by definition 
\[
J^*_{s,t}(\xi)=\sup_{r\in \R}\{r\xi-J_{s,t}(\xi)\}=\sup_{r< g_{pp}(s,t)}\{r\xi-J_{s,t}(r)\}\vee\sup_{r\in[g_{pp}(s,t),s]}\{r\xi-J_{s,t}(r)\}\vee\sup_{r>s}\{r\xi-J_{s,t}(r)\}.
\]
Note that the first supremum is $+\infty$ since $J_{s,t}(r)=0$ for $r< g_{pp}(s,t)$ and $\xi<0$. Therefore $J^*_{s,t}(\xi)=\infty$ if $\xi<0$.

If  $\xi \ge 0$, equation \eqref{Hdef} gives that $H^a_{(s,t)}$ is the infimal convolution  of $\kappa_a$ and $J_{(s,t)-\bar{\mathbf{v}}(a)}$ since the value of the infimum does not change when we allow $x$ to range over all of $\R$.  
We compactify the notation by writing  $H^a_{s,t}(r)=\kappa_a\Box J_{(s,t)-\bar{\mathbf{v}}(a)}(r)$. By Theorem 16.4 in \cite{Roc-70}, the addition operation is dual to the infimal convolution operation. From \eqref{Req} of Lemma \ref{Rlem}, take the Legendre dual on both sides to obtain 
\be\label{Rstara}
\begin{aligned}
s C_{\mathcal B}^{(u)}(\xi)&=\sup_{-t\leq a\leq s}\Big\{\sup_{r\in \R}\{r\xi-(\kappa_a\Box J_{(s,t)-\bar{\mathbf{v}}(a)})(r)\}\Big\}\\
&=\sup_{-t\leq a\leq s}\big\{(\kappa_a\Box J_{(s,t)-\bar{\mathbf{v}}(a)})^*(\xi)\big\}=\sup_{-t\leq a\leq s}\big\{(\kappa_a^*(\xi)+ J^*_{(s,t)-\bar{\mathbf{v}}(a)}(\xi)\big\}.
\end{aligned}
\ee
From \eqref{kstar} we can substitute the explicit expression of $\kappa_a^*(\xi)$. 
Define 
\be\label{eq:hd}
\begin{aligned}
-\ell_\xi(u)&=C_\mathcal G^{(\frac{u-p}{u(1-p)})}(-\xi)=\log\frac{u-p}{u(1-p)-p(1-u)e^{-\xi}},\\
d_\xi(u)&=C_\mathcal B^{(u)}(\xi)=\log(ue^\xi+1-u).
\end{aligned}
\ee
Use this to simplify \eqref{Rstara} into 
\[
 sd_\xi(u)+t\ell_\xi(u)=\sup_{0\leq a\leq t}\{a\ell_\xi(u)+ J^*_{s,t-a}(\xi)\big\}\bigvee \sup_{0\leq a\leq s}\{ad_\xi(u) + J^*_{s-a,t}(\xi)\big\}.
\]
Subtract $sd_\xi(u)+t\ell
_\xi(u)$ to either side
\[
0= \sup_{0\leq z\leq s}\{J^*_{s-a,t}(\xi)-[(s-a)d_\xi(u)+t\ell_\xi(u)]\big\}\vee \sup_{0\leq \tilde{z}\leq t}\{J^*_{s,t-a}(\xi)-[sd_\xi(u)+(t-a)\ell_\xi(u)]\big\}.
\]
Use Proposition \ref{prop:dousup} identifying as $I=(p,1]$, $\Lambda(s,t)=J^*_{s,t}(\xi)$, $h(u)=d_\xi(u)$, $g(u)=\ell_\xi(u)$ and therefore $f_{s,t}(u)= sd_\xi(u)+t\ell_\xi(u)$. The only hypothesis that is not immediately verifiable is continuity of $J^*$ in $s, t$, but that is now covered by Lemma \ref{GJ}.
Therefore, if $t<\frac{1-p}{p}s$
\[
J^*_{s,t}(\xi)=\min_{u\in(p,1]}\{ sd_\xi(u)+t\ell_\xi(u)\}=\min_{u\in(p,1]}\{ sC_\mathcal B^{(u)}(\xi)-tC_\mathcal G^{(\frac{u-p}{u(1-p)})}(-\xi)\}.
\]
For $t \ge \frac{1-p}{p}s$ we reason directly: $J_{s,t}(r) = + \infty \mathbbm 1\{ r > s\}$ and its convex dual will be $s \xi$ for $\xi > 0$. This is also 
the $\min_{u\in(p,1]}\{ sC_\mathcal B^{(u)}(\xi)-tC_\mathcal G^{(\frac{u-p}{u(1-p)})}(-\xi)\}$, with the minimum obtained at $u = 1$.
\end{proof}

\subsection{Closed formula for $J^*_{s,t}(\xi)$}

\begin{proof}[Proof of Theorem \ref{thm:lam}]
The aim of this proof is to find an analytical result for the infimum in Proposition \ref{propJ} when $t<\frac{1-p}{p}s$. Therefore we start computing the derivatives of the two cumulant-generating function and to find the optimizing point we solve the equation
\begin{align*}
0=&s\frac{\partial C^{(u)}_\mathcal B(\xi)}{\partial u}-t\frac{\partial C^{(\frac{u-p}{u(1-p)})}_\mathcal G(-\xi)}{\partial u}=s\frac{e^\xi-1}{1+u(e^\xi-1)}\\
&\hspace{3.5cm}-t\frac{p(p-1)(e^{-\xi}-1)}{u^2(1+p(e^{-\xi}-1))-up[1+e^{-\xi}+p(e^{-\xi}-1)]+p^2e^{-\xi}}
\end{align*}
or equivalently, after the algebraic simplification of denominators
\begin{align*}
0=&u^22s[(1-p)(e^\xi-1)+p(1-e^{-\xi})]-up[(1-p)(s+t)(e^\xi+e^{-\xi}-2)+2s(1-e^{-\xi})]\\
&\phantom{xxxx}+(e^{-\xi}-1)p((1-p)(s+t)-s).
\end{align*}
The minimum is in fact attained to the solution to this equation (for further details see Appendix \ref{sec:app1}). The minimizing point is
\be\label{eq:umin}
u^*=\frac{p(1-p)(s+t)(e^\xi+e^{-\xi}-2)+2sp(1-e^{-\xi})+\sqrt{\Delta}}{2s[(1-p)(e^\xi-1)+p(1-e^{-\xi})]}
\ee
with $\Delta=p(1-p)(e^\xi+e^{-\xi}-2)[(1-p)p(s+t)^2(e^\xi+e^{-\xi}-2)+4st]$.
Then \eqref{exactform} follows directly by 
\[
J_{s,t}^*(\xi) = s C_{\mathcal B}^{(u^*)}(\xi) - t s C_{\mathcal G}^{(u^*)}(-\xi). \qedhere
\]

\end{proof}

\section{Invariant model: Limiting log-moment generating functions}
\label{sec:immgf}

Define the last passage time's l.m.g.f. for the boundary model
\be\label{cmgfb}
\Lambda^{(u)}_{(s,t)}(\xi)=\lim_{N\to\infty} N^{-1}\log\E e^{\xi G^{(u)}_{\fl{Ns},\fl{Nt}}}.
\ee
In this section we find $\Lambda^{(u)}_{(s,t)}(\xi)$ when $\xi >0$. 

It will be convenient to also define the l.m.g.f.\ for the two passage times conditional on the first step 
being $e_1$ or $e_2$,  $G^{(u),\text{hor}}_{\fl{Ns},\fl{Nt}}$ and $G^{(u),\text{hor}}_{\fl{Ns},\fl{Nt}}$ 
given by \eqref{lpthor} and \eqref{lptver} respectively. The corresponding  l.m.g.f.\ are
\be\label{lamhor}
\Lambda^{(u),\text{hor}}_{(s,t)}(\xi)=\lim_{N\to\infty}N^{-1}\log\E e^{\xi G^{(u),\text{hor}}_{\fl{Ns},\fl{Nt}}}
\ee
and
\be\label{lamver}
\Lambda^{(u),\text{ver}}_{(s,t)}(\xi)=\lim_{N\to\infty}N^{-1}\log\E e^{\xi G^{(u),\text{ver}}_{\fl{Ns},\fl{Nt}}}.
\ee
The existence of the above limits is verified in Lemma \ref{justif} below, but we state it as part of the main Theorem \ref{thm:boundary}.

The existence of the two limits above then gives rise to the formula 
\be\label{lambound}
\Lambda^{(u)}_{(s,t)}(\xi)=\Lambda^{(u),\text{hor}}_{(s,t)}(\xi)\vee\Lambda^{(u),\text{ver}}_{(s,t)}(\xi) \quad \text{for any } \xi >0.
\ee
Thus, finding $\Lambda^{(u)}_{(s,t)}(\xi)$ is equivalent to finding $\Lambda^{(u),\text{hor}}_{(s,t)}(\xi), \Lambda^{(u),\text{ver}}_{(s,t)}(\xi)$,
which is the content of Theorem \ref{thm:boundary} below.

Heuristically, one expects the creation of some critical direction for $(s,t)$ that will depend on $\xi, p, u$; 
below the direction the boundary effect will be felt at the l.g.m.f.\ level, and otherwise the model will behave like the boundary is not present. This was also observed at the LLN level in Theorem \ref{thm:HOR}. In fact this is the case.

For $\xi > 0$ we define
\be \label{eq:ku}
k^{(u)}(\xi)=\Big(\frac{\partial C_\mathcal B^{(u)}(\xi)}{\partial u}\Big)/\Big(\frac{\partial C_\mathcal G^{(u)}(-\xi)}{\partial u}\Big).
\ee
The relevant conditions that create a critical line are
\be\label{lmgft}
t = k^{(u)}(\xi) s, \quad \text{ and } t = k^{(u)}(-\xi) s,
\ee
for $\Lambda^{(u), \text{hor}}$ and $\Lambda^{(u), \text{ver}}$ respectively. Recall that $l.m.g.f$ of $G_{\fl{Ns},\fl{Nt}}$ is given by Corollary \ref{cor:lam}, and is equal to $I^*_{s,t}(\xi) = J^*_{s,t}(\xi)$. For uniformity of notation in the section, set  $\Lambda_{(s,t)}(\xi) = I^*_{s,t}(\xi)$.

\begin{theorem}\label{thm:boundary}
Let $s,t\geq0$, $u\in(p,1)$ and $\xi\geq0$. 
\begin{enumerate}[(a)]
	\item The limit in \eqref{lamhor} exists and is given by
		\be\label{lamhorc}
			\Lambda^{(u),{ \rm hor}}_{(s,t)}(\xi)=
				\begin{cases}
					sC_\mathcal B^{(u)}(\xi)-tC_\mathcal G^{(\frac{u-p}{u(1-p)})}(-\xi)\qquad&\text{if }
					t< k^{(u)}(\xi)s,\\
					\Lambda_{(s,t)}(\xi)\qquad&\text{if }t\geq k^{(u)}(\xi) s.
				\end{cases}
		\ee
	\item The limit in \eqref{lamver} exists and is given by
		\be \label{lamverc}
			\Lambda^{(u),{\rm ver}}_{(s,t)}(\xi)=
		\begin{cases}
			tC_\mathcal G^{(\frac{u-p}{u(1-p)})}(\xi)-sC_\mathcal B^{(u)}(-\xi),\qquad &\text{if }\xi\in[0,\log\frac{u(1-p)}{p(1-u)}) \text{ and }t> k^{(u)}(-\xi)s,\\
			\Lambda_{(s,t)}(\xi),\qquad &\text{if }\xi\in[0,\log\frac{u(1-p)}{p(1-u)}) \text{ and }t\leq k^{(u)}(-\xi)s,\\
				\infty,\qquad &\text{if }\xi\in[\log\frac{u(1-p)}{p(1-u)},\infty).
		\end{cases}
		\ee
	\end{enumerate}
\end{theorem}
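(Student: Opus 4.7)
The plan is to decouple the boundary contributions from the bulk passage time via a coarse-graining reduction, turning each of the limits \eqref{lamhor}, \eqref{lamver} into a one-dimensional concave maximization involving the i.i.d.\ l.m.g.f.\ $\Lambda_{(s,t)}(\xi)=J^{*}_{s,t}(\xi)$, and then solving it in closed form using Proposition~\ref{propJ}.

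For the first step, starting from \eqref{lpthor}, independence of the boundary weights $\om_{i,0}$ from the bulk together with the translation identity $G_{(k,1),(\fl{Ns},\fl{Nt})}\dis G_{\fl{Ns}-k,\fl{Nt}-1}$ give the upper bound
\[
\E e^{\xi G^{(u),\text{hor}}_{\fl{Ns},\fl{Nt}}}\le \fl{Ns}\max_{1\le k\le \fl{Ns}}(1-u+ue^{\xi})^{k}\,\E e^{\xi G_{\fl{Ns}-k,\fl{Nt}-1}},
\]
while keeping a single index $k=\fl{Na}$ produces a matching lower bound. Taking $N^{-1}\log$ and using Corollary~\ref{cor:lam} together with the continuity of $(s,t)\mapsto\Lambda_{(s,t)}(\xi)$ supplied by Lemma~\ref{GJ}, one obtains
\[
\Lambda^{(u),\text{hor}}_{(s,t)}(\xi)=\sup_{0\le a\le s}\{aC_{\mathcal B}^{(u)}(\xi)+\Lambda_{(s-a,t)}(\xi)\}.
\]
The same argument applied to \eqref{lptver} yields
\[
\Lambda^{(u),\text{ver}}_{(s,t)}(\xi)=\sup_{0\le b\le t}\Big\{bC_{\mathcal G}^{(\frac{u-p}{u(1-p)})}(\xi)+\Lambda_{(s,t-b)}(\xi)\Big\}
\]
whenever $\xi<\log\frac{u(1-p)}{p(1-u)}$; when $\xi\ge\log\frac{u(1-p)}{p(1-u)}$, restricting the max in \eqref{lptver} to $\ell=1$ already gives $\E e^{\xi\om_{0,1}}=\infty$ and hence $\Lambda^{(u),\text{ver}}_{(s,t)}(\xi)=\infty$.

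For the second step, write $h(v)=C_{\mathcal B}^{(v)}(\xi)$ and $\phi(v)=C_{\mathcal G}^{(\frac{v-p}{v(1-p)})}(-\xi)$, so that Proposition~\ref{propJ} reads $\Lambda_{(s,t)}(\xi)=\inf_{v\in(p,1]}\{sh(v)-t\phi(v)\}$ with a unique minimizer $v^{*}(s,t,\xi)$ characterized by $t/s=h'(v^{*})/\phi'(v^{*})=k^{(v^{*})}(\xi)$. The envelope theorem gives $\partial_{s}\Lambda=h(v^{*})$ and $\partial_{t}\Lambda=-\phi(v^{*})$. For part (a), strict concavity in $a$ reduces the first-order condition to $h(v^{*}(s-a,t,\xi))=h(u)$, equivalently $v^{*}(s-a,t,\xi)=u$, placing $(s-a,t)$ on the $\xi$-characteristic ray for $u$ and giving $a^{*}=s-t/k^{(u)}(\xi)$. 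On $\{t<k^{(u)}(\xi)s\}$ this is an interior maximizer and substitution yields the first line of \eqref{lamhorc}, while on $\{t\ge k^{(u)}(\xi)s\}$ one has $a^{*}\le 0$ and the maximum is $\Lambda_{(s,t)}(\xi)$.

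For part (b), the first-order condition $\phi(v^{*}(s,t-b,\xi))=-C_{\mathcal G}^{(\frac{u-p}{u(1-p)})}(\xi)$ defines a dual parameter $v_{u}\ne u$. Using the explicit logarithmic form of $C_{\mathcal G}^{(q)}(\pm\xi)$, this equation simplifies to the one-line relation $r_{v_{u}}=e^{\xi}r_{u}$ where $r_{u}=\frac{p(1-u)}{u(1-p)}$, which is well-posed exactly in the regime $\xi<\log\frac{u(1-p)}{p(1-u)}$, and direct verification then yields the two identities
\[
(1-v_{u}+v_{u}e^{\xi})(1-u+ue^{-\xi})=1\qquad\text{and}\qquad k^{(v_{u})}(\xi)=k^{(u)}(-\xi),
\]
the first equivalent to $h(v_{u})=-C_{\mathcal B}^{(u)}(-\xi)$ and the second identifying the critical ray as $t=k^{(u)}(-\xi)s$. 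On $\{t>k^{(u)}(-\xi)s\}$ the interior maximizer $b^{*}=t-k^{(v_{u})}(\xi)s\in(0,t]$ together with $\Lambda_{(s,t-b^{*})}(\xi)=sh(v_{u})-(t-b^{*})\phi(v_{u})$ and the two identities telescope to $tC_{\mathcal G}^{(\frac{u-p}{u(1-p)})}(\xi)-sC_{\mathcal B}^{(u)}(-\xi)$; otherwise $b^{*}=0$ and the maximum equals $\Lambda_{(s,t)}(\xi)$. The main technical obstacle will be the identification of the critical ray in the vertical case, which hinges on the Bernoulli/Geometric conjugation $r_{v_{u}}=e^{\xi}r_{u}$ described above; the variational reduction of step one is comparatively routine.
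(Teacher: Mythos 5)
Your proposal is correct, and it reaches \eqref{lamhorc}--\eqref{lamverc} by a genuinely different route from the paper. The paper does not bound the moment generating functions directly: it first establishes a right-tail LDP for $G^{(u),\text{hor}}$ (equation \eqref{Jhor}) and converts it into the l.m.g.f.\ via the truncation argument of Lemma \ref{justif}, arriving at the same variational identity $\Lambda^{(u),\rm hor}_{(s,t)}(\xi)=\sup_{a\in[0,s]}\{aC^{(u)}_\mathcal B(\xi)+J^*_{s-a,t}(\xi)\}$ that you obtain by a union bound over the exit point plus coarse graining, Corollary \ref{cor:lam} and the continuity from Lemma \ref{GJ} (your one-line minorization by $\om_{0,1}$ in the regime $\xi\ge\log\frac{u(1-p)}{p(1-u)}$ is also fine). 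The paper then substitutes Proposition \ref{propJ} (and, for the vertical case, the alternative dual representation \eqref{eq:vercoun}, justified only briefly), interchanges sup and inf by a minimax theorem and exploits linearity in $a$, comparing the unconstrained minimizer $\theta^*$ of \eqref{eq:umin} with $u$; you instead solve the concave maximization by envelope/first-order conditions, which in the vertical case produces the conjugate parameter $v_u$ with $r_{v_u}=e^\xi r_u$. I checked your identities $(1-v_u+v_ue^\xi)(1-u+ue^{-\xi})=1$ and $k^{(v_u)}(\xi)=k^{(u)}(-\xi)$: both hold, so your argument is self-contained where the paper needs \eqref{eq:vercoun}, and it recovers the case boundaries $t=k^{(u)}(\pm\xi)s$ directly from the stationarity condition $t/s=k^{(v^*)}(\xi)$, confirming the orientation of the cases in the statement (the paper's written sign chart for $C^{(u)}_\mathcal B(\xi)-C^{(\theta)}_\mathcal B(\xi)$ and the monotonicity claim for $-C^{(\frac{\theta-p}{\theta(1-p)})}_\mathcal G(-\xi)$ are stated with reversed signs, though the displayed result agrees with yours). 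Two points to tighten, neither a gap: the upper bound in your first step needs the standard partition/monotonicity argument to make the maximum over $k$ uniform before passing to the limit, exactly as in the proof of \eqref{eq:maxpath} in Appendix \ref{app: LLN}; and you should not invoke strict concavity in $a$, since on the flat edge $\Lambda_{(s-a,t)}(\xi)=(s-a)\xi$ is linear --- plain concavity together with the sign of the one-sided derivative at $a=0$, namely $C^{(u)}_\mathcal B(\xi)-C^{(v^*(s,t,\xi))}_\mathcal B(\xi)$, suffices and also covers the flat-edge directions.
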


The last theorem proves the full l.m.g.f. for the boundary model. Define 
\be\label{mm}
\ell^{(u)}(\xi)=\frac{C^{(u)}_\mathcal B(\xi)+C^{(u)}_\mathcal B(-\xi)}{C^{(\frac{u-p}{u(1-p)})}_\mathcal G(\xi)+C^{(\frac{u-p}{u(1-p)})}_\mathcal G(-\xi)}.
\ee
%
Then, the l.g.m.f.\ for the boundary last passage time is given by 

\begin{theorem} \label{thm:boundarywhole}
Let $s,t\geq 0$ and $u\in(p,1]$. Then the limit in \eqref{cmgfb} exists for $\xi\geq0$ and is given by
\be\label{eq:lambound}
\Lambda^{(u)}_{(s,t)}(\xi)=
\begin{cases}
	sC^{(u)}_\mathcal B(\xi)-tC^{(\frac{u-p}{u(1-p)})}_\mathcal G(-\xi)	,\qquad &\text{if }\xi\in[0,\log\frac{u(1-p)}{p(1-u)}) \text{ and }t< \ell^{(u)}(\xi)s,\\
		tC_\mathcal G^{(\frac{u-p}{u(1-p)})}(\xi)-sC_\mathcal B^{(u)}(-\xi),\qquad &\text{if }\xi\in[0,\log\frac{u(1-p)}{p(1-u)}) \text{ and }t\geq \ell^{(u)}(\xi)s,\\
				\infty,\qquad &\text{if }\xi\in[\log\frac{u(1-p)}{p(1-u)},\infty).
\end{cases}
\ee

\end{theorem}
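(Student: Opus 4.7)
My plan starts from identity \eqref{lambound}, $\Lambda^{(u)}_{(s,t)}(\xi) = \Lambda^{(u),\text{hor}}_{(s,t)}(\xi) \vee \Lambda^{(u),\text{ver}}_{(s,t)}(\xi)$, which is an immediate consequence of \eqref{eq:gvg} together with the elementary two-sided bound $\tfrac12(e^{\xi X}+e^{\xi Y})\le e^{\xi(X\vee Y)}\le e^{\xi X}+e^{\xi Y}$ valid for $\xi\ge 0$ (both sides yield the same exponential growth rate after $N^{-1}\log$). For $\xi \ge \log\frac{u(1-p)}{p(1-u)}$, Theorem \ref{thm:boundary}(b) gives $\Lambda^{(u),\text{ver}}=\infty$, hence $\Lambda^{(u)}=\infty$, matching the last case of \eqref{eq:lambound}.

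For $0\le\xi<\log\frac{u(1-p)}{p(1-u)}$ I introduce the shorthand $A := sC_\mathcal B^{(u)}(\xi) - tC_\mathcal G^{(\frac{u-p}{u(1-p)})}(-\xi)$ and $B := tC_\mathcal G^{(\frac{u-p}{u(1-p)})}(\xi) - sC_\mathcal B^{(u)}(-\xi)$. Both are affine in $t$ with positive slope (since $C_\mathcal G^{(\cdot)}(\mp\xi)$ has the expected sign for $\xi\ne 0$). From \eqref{mm}, a one-line rearrangement identifies $A=B$ with $t=\ell^{(u)}(\xi) s$, and $A(s,0)-B(s,0)=s\bigl[C_\mathcal B^{(u)}(\xi)+C_\mathcal B^{(u)}(-\xi)\bigr]>0$ (by AM--GM applied to $e^{\pm\xi}$) shows that $A>B$ precisely when $t<\ell^{(u)}(\xi) s$.

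The key structural fact is geometric: by Lemma \ref{GJ}, $\Lambda_{(s,t)}(\xi) = J^*_{(s,t)}(\xi)$ is concave in $(s,t)$; and since $\Lambda^{\text{hor}}$ is continuous (being a limit of l.m.g.f.'s) and by Theorem \ref{thm:boundary}(a) equals $A$ on $\{t<k^{(u)}(\xi)s\}$ and $\Lambda$ on $\{t\ge k^{(u)}(\xi)s\}$, the two pieces must match in value \emph{and} derivative at the threshold. Hence $A$ is the tangent line to the concave function $t\mapsto\Lambda_{(s,t)}(\xi)$ at $t_A:=k^{(u)}(\xi) s$; symmetrically $B$ is the tangent at $t_B:=k^{(u)}(-\xi) s$. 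Concavity therefore forces $\Lambda\le A$ and $\Lambda\le B$ globally, and a short derivative computation (the derivative of $\xi\mapsto k^{(u)}(\xi)$ has a positive factor $ue^\xi(1-u)$ for $u<1$) shows $t_B<t_A$ when $\xi>0$. Since two tangents to a strictly concave curve meet at a point whose abscissa lies between the tangent points, the intersection $t=\ell^{(u)}(\xi)s$ of $A$ and $B$ automatically satisfies $t_B\le\ell^{(u)}(\xi)s\le t_A$.

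This ordering lets me split into three $t$-regions and finish by direct case analysis. For $t\le t_B$: Theorem \ref{thm:boundary} gives $\Lambda^{\text{hor}}=A$ and $\Lambda^{\text{ver}}=\Lambda\le A$, so $\Lambda^{(u)}=A$. For $t\ge t_A$: symmetrically $\Lambda^{(u)}=B$. In the intermediate strip $t_B<t<t_A$ both sides are in their boundary regimes so $\Lambda^{(u)}=A\vee B$, which switches from $A$ to $B$ at $t=\ell^{(u)}(\xi) s$. Combining the three pieces reproduces \eqref{eq:lambound}. The principal obstacle is the geometric derivation that $A$ and $B$ are tangent lines to the concave $\Lambda$ at $t_A$ and $t_B$: this rests on Lemma \ref{GJ} together with the $C^1$-matching of the two pieces of $\Lambda^{\text{hor}}$ (respectively $\Lambda^{\text{ver}}$) at their thresholds. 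Once this is in place, the three-region case analysis is immediate.
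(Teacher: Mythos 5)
Your overall architecture coincides with the paper's: both start from \eqref{lambound}, feed in Theorem \ref{thm:boundary}, split the quadrant along the two critical lines $t=k^{(u)}(\pm\xi)s$, and identify the crossing line $t=\ell^{(u)}(\xi)s$ of the two affine expressions $A$ and $B$. The one place where you depart from the paper is how you obtain the global bounds $\Lambda_{(s,t)}(\xi)\le A$ and $\Lambda_{(s,t)}(\xi)\le B$, and there your argument has a genuine gap: you claim that, because $\Lambda^{(u),\rm hor}$ is continuous and equals $A$ on one side of $t=k^{(u)}(\xi)s$ and $\Lambda$ on the other, the two pieces ``must match in value and derivative at the threshold''. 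Continuity gives only value matching; it does not give derivative matching, and Lemma \ref{GJ} (concavity of $\Lambda$ in $(s,t)$) does not supply it either --- a concave function can perfectly well be glued continuously to an affine piece with a kink. Since this tangency (supporting-line) property is exactly what you use to deduce $\Lambda\le A$ and $\Lambda\le B$, and those bounds drive the case analysis in the outer regions as well as the ordering $k^{(u)}(-\xi)\le\ell^{(u)}(\xi)\le k^{(u)}(\xi)$, the proof as written is incomplete at its central step. (To make the $C^1$ claim rigorous you would need, e.g., concavity of $t\mapsto\Lambda^{(u),\rm hor}_{(s,t)}(\xi)$ or an envelope-theorem argument at the minimizer of the variational formula; neither appears in your text.)

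The repair is cheap and is precisely what the paper does: by Proposition \ref{propJ} and \eqref{eq:vercoun}, $\Lambda_{(s,t)}(\xi)=J^*_{s,t}(\xi)$ is an \emph{infimum over the boundary parameter} of exactly the expressions $A$ and $B$, so $\Lambda\le A$ and $\Lambda\le B$ hold pointwise with no geometry at all (this is \eqref{dom}). With \eqref{dom} in hand the tangency claim becomes unnecessary: for $t\le k^{(u)}(-\xi)s$ the maximum in \eqref{lambound} is $A$, for $t\ge k^{(u)}(\xi)s$ it is $B$, and in the middle strip it is $A\vee B$; moreover value matching at the thresholds (which \emph{does} follow from continuity, or from the fact that $u$ is the minimizer of the variational formula exactly when $t=k^{(u)}(\pm\xi)s$) gives $A\le B$ at $t=k^{(u)}(\xi)s$ and $B\le A$ at $t=k^{(u)}(-\xi)s$, so by linearity of $A-B$ in $t$ the crossing $t=\ell^{(u)}(\xi)s$ lies inside the strip --- no strict concavity or tangent-line geometry is needed. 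The rest of your argument (the identification $A=B\iff t=\ell^{(u)}(\xi)s$ from \eqref{mm}, $A>B$ for $t<\ell^{(u)}(\xi)s$, the verification $k^{(u)}(-\xi)<k^{(u)}(\xi)$, and the case $\xi\ge\log\frac{u(1-p)}{p(1-u)}$ via Theorem \ref{thm:boundary}(b)) is correct and matches the paper.
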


Before the two proofs, we begin by verifying the existence of limits \eqref{lamhor} and \eqref{lamver}. We begin by noting that similar arguments as in Lemma \ref{Rlem} give 
that 
\be\label{Jhor}
-\lim_{N\to\infty}N^{-1}\log\P\{G^{(u),\text{hor}}_{\fl{Ns},\fl{Nt}}\geq Nr\}=\inf_{a\in[0,s]} \inf_{x \in \R}\{aI_{\mathcal B}^{(u)}((r-x)/a) + J_{s-a,t}(x)\}.
\ee
Equation \eqref{Jhor} in particular verifies the existence of the limit in the left-hand side, and we denote it by 
\be \label{eq:horj}
-\lim_{N\to\infty}N^{-1}\log\P\{G^{(u),\text{hor}}_{\fl{Ns},\fl{Nt}}\geq Nr\} = J^{(u),\text{hor}}_{s,t}(r).
\ee
Finally, observe that we take the Legendre transform, equation \eqref{Jhor} becomes 
\be \label{eq:jhor*}
(J^{(u),\text{hor}}_{s,t})^*(\xi) = \sup_{a\in[0,s]}\{aC^{(u)}_\mathcal B(\xi)+J^*_{s-a,t}(\xi)\}.
\ee
Symmetric definitions and arguments give similar equations for $J^{(u),\text{ver}}_{s,t}$.
\begin{lemma}\label{justif}
Let $G^{(u),\text{\rm hor}}_{\fl{Ns},\fl{Nt}}$ be the last passage time given by \eqref{lpthor}, and let  $(J^{(u),\text{\rm hor}}_{s,t})^*(\xi) $  given by \eqref{eq:jhor*}. 
Then for $\xi > 0$,
\be \label{eq:horl}
\lim_{N\to\infty}N^{-1}\log\E[e^{\xi G^{(u),\text{\rm hor}}_{\fl{Ns},\fl{Nt}}}]=(J^{(u),\text{\rm hor}}_{s,t})^*(\xi) .
\ee
Corresponding statements hold for $G^{(u),\text{\rm ver}}_{\fl{Ns},\fl{Nt}}$.
\end{lemma}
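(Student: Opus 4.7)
The plan is to apply a direct Varadhan-type argument that exploits the bounded support of $G^{(u),\text{hor}}_{\fl{Ns},\fl{Nt}}$. Since the horizontal boundary weights $\om_{i,0}$ and the bulk weights are all Bernoulli, and any admissible path from $(0,0)$ to $(\fl{Ns},\fl{Nt})$ uses exactly $\fl{Ns}$ horizontal steps each contributing at most $1$, we have $G^{(u),\text{hor}}_{\fl{Ns},\fl{Nt}} \le \fl{Ns}$ almost surely. This deterministic bound trivially supplies the moment condition needed by Varadhan.

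The first step is to confirm the right-tail rate function formula \eqref{Jhor}--\eqref{eq:jhor*}. Starting from \eqref{lpthor}, the two ingredients $\sum_{i=1}^{k} \om_{i,0}$ and $G_{(k,1),(\fl{Ns},\fl{Nt})}$ are independent; the first satisfies Cram\'er's theorem at scale $k = \fl{Na}$ with rate $a I_{\mathcal B}^{(u)}((r-x)/a)$, while the second satisfies the right-tail LDP of Theorem \ref{JInt} with rate $J_{s-a,t}(x)$. An infimum-convolution step in $x$ via Lemma \ref{srf}, followed by a coarse-graining in $a$ patterned after the proof of Lemma \ref{Rlem}, establishes \eqref{Jhor}; taking Legendre duals gives \eqref{eq:jhor*} and inherits continuity and monotonicity of $r \mapsto J^{(u),\text{hor}}_{s,t}(r)$ on $[g^{(u),\text{hor}}_{pp}(s,t), s]$ from the corresponding properties of $J_{s,t}$ in Theorem \ref{JInt}.

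Given this, the l.m.g.f.\ identity \eqref{eq:horl} follows from the standard layer-cake pairing. For the upper bound, one writes
\begin{equation*}
\E\bigl[e^{\xi G^{(u),\text{hor}}_{\fl{Ns},\fl{Nt}}}\bigr] \le 1 + \xi N \int_{0}^{s} e^{\xi N r}\, \P\bigl\{G^{(u),\text{hor}}_{\fl{Ns},\fl{Nt}} \ge Nr\bigr\}\, dr,
\end{equation*}
partitions $[g^{(u),\text{hor}}_{pp}(s,t), s]$ into mesh-$\delta$ subintervals, applies the monotone bound $\P\{X \ge Nr\} \le \P\{X \ge N r_i\}$ on each subinterval together with the right-tail LDP, and lets $\delta \to 0$ using continuity of $J^{(u),\text{hor}}_{s,t}$ to obtain
\[
\varlimsup_{N\to\infty} N^{-1}\log\E\bigl[e^{\xi G^{(u),\text{hor}}_{\fl{Ns},\fl{Nt}}}\bigr] \le \sup_{r \in [g^{(u),\text{hor}}_{pp}(s,t),\, s]}\{\xi r - J^{(u),\text{hor}}_{s,t}(r)\},
\]
which coincides with $(J^{(u),\text{hor}}_{s,t})^*(\xi)$ for $\xi > 0$ by the same reasoning used in Corollary \ref{cor:lam}. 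The matching lower bound is the one-point estimate $\E[e^{\xi G^{(u),\text{hor}}_{\fl{Ns},\fl{Nt}}}] \ge e^{\xi N r}\,\P\{G^{(u),\text{hor}}_{\fl{Ns},\fl{Nt}} \ge Nr\}$ optimized over $r$ in the same interval.

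The proof for $G^{(u),\text{ver}}_{\fl{Ns},\fl{Nt}}$ is identical in structure, starting from \eqref{lptver}. The one substantive difference is that the vertical boundary weights are $\text{Geom}\bigl(\frac{u-p}{u(1-p)}\bigr)$ and therefore unbounded, so $G^{(u),\text{ver}}_{\fl{Ns},\fl{Nt}}$ no longer has deterministically bounded support. The exponential moment of a single vertical boundary weight is finite precisely when $\xi < \log\frac{u(1-p)}{p(1-u)}$, which is the regime in which the layer-cake integral converges and the coarse-graining carries through; outside this range the l.m.g.f.\ is $+\infty$ already by a single-term lower bound, matching Theorem \ref{thm:boundary}(b). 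The only genuinely delicate point throughout is the uniform-in-$r$ upper-tail control on the compact interval $[g^{(u),\text{hor}}_{pp}(s,t), s]$, which the mesh-$\delta$ partition plus continuity of $J^{(u),\text{hor}}_{s,t}$ handle cleanly.
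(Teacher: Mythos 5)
Your proposal is correct and follows essentially the same route as the paper: the lower bound is the exponential Chebyshev (one-point) estimate optimized over $r$, and the upper bound discretizes the range of $N^{-1}G^{(u),\text{hor}}_{\fl{Ns},\fl{Nt}}$ against the right-tail rate function obtained from \eqref{Jhor}--\eqref{eq:jhor*}, using boundedness by $\fl{Ns}$ (respectively the exponential-moment restriction $\xi<\log\frac{u(1-p)}{p(1-u)}$ in the vertical case) to control the tail. Your layer-cake integral over $[0,s]$ is just a cosmetic repackaging of the paper's partition $r_i=i\delta$ together with the truncation claim \eqref{claimlem}, and the appeal to continuity of $J^{(u),\text{hor}}_{s,t}$ is not even needed, since the mesh error is already absorbed in the $\xi\delta$ term.
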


\begin{proof}
Let $\xi\geq0$. Set
\[
\barbelow{\gamma}=\varliminf_{N\to\infty}N^{-1}\log\E[e^{\xi G^{(u),\text{hor}}_{\fl{Ns},\fl{Nt}}}]\qquad\text{and}\qquad \bar{\gamma}=\varlimsup_{N\to\infty}N^{-1}\log\E[e^{\xi G^{(u),\text{hor}}_{\fl{Ns},\fl{Nt}}}].
\]
The lower bound is immediate using the exponential Chebyshev inequality
\[
N^{-1}\log\P\{ G^{(u),\text{hor}}_{\fl{Ns},\fl{Nt}}\geq Nr\}\leq -\xi r +N^{-1}\log\E[e^{\xi G^{(u),\text{hor}}_{\fl{Ns},\fl{Nt}}}].
\]
Letting $N\to\infty$ along a suitable subsequence gives $\barbelow{\gamma}\geq\xi r -J^{(u),\text{hor}}_{s,t}(r)$ for all $r\in[0,s]$. Thus $\barbelow{\gamma}\geq  (J^{(u),\text{hor}}_{s,t})^*(\xi) $ holds.

For the upper bound we first claim that for every $r>s$
\be\label{claimlem}
\varlimsup_{N\to\infty}N^{-1}\log\E[e^{\xi G^{(u),\text{hor}}_{\fl{Ns},\fl{Nt}}}\mathbbm{1}\{G^{(u),\text{hor}}_{\fl{Ns},\fl{Nt}}\geq Nr\}]=-\infty.
\ee
To see this, apply Holder's inequality to the expectation in \eqref{claimlem}. For any $\alpha>1$,
\begin{align*}
N^{-1}&\log\E[e^{\xi G^{(u),\text{hor}}_{\fl{Ns},\fl{Nt}}}\mathbbm{1}\{G^{(u),\text{hor}}_{\fl{Ns},\fl{Nt}}\geq Nr\}]\\
&\leq N^{-1}\log\{\E[e^{\alpha\xi G^{(u),\text{hor}}_{\fl{Ns},\fl{Nt}}}]^{\alpha^{-1}}\E[\mathbbm{1}\{G^{(u),\text{hor}}_{\fl{Ns},\fl{Nt}}\geq Nr\}^{\frac{\alpha}{\alpha-1}}]^{\frac{\alpha-1}{\alpha}}\}\\
&=( \alpha N)^{-1}\log(\E[e^{\alpha\xi G^{(u),\text{hor}}_{\fl{Ns},\fl{Nt}}}])+(\alpha-1)\alpha^{-1}N^{-1}\log\P\{G^{(u),\text{hor}}_{\fl{Ns},\fl{Nt}}\geq Nr\}.
\end{align*}
The first term is finite since $G^{(u),\text{hor}}_{\fl{Ns},\fl{Nt}}\leq \fl{Ns}$ and for the same reason the second term equals $- \infty$.
%
%

 To show the upper bound in \eqref{eq:horl} pick a $\delta>0$ and partition $\R$ with $r_i=i\delta$, $i\in\Z$:
\begin{align}
&N^{-1}\log\E[e^{\xi G^{(u),\text{hor}}_{\fl{Ns},\fl{Nt}}}]\notag\\
&\begin{aligned}
\leq N^{-1}\log\Big[&\sum_{i=-m}^me^{N\xi r_{i+1}}\P\{G^{(u),\text{hor}}_{\fl{Ns},\fl{Nt}}\geq Nr_i\}\\
&\phantom{xxxxxxx} +e^{N\xi r_{-m}}+\E[e^{\xi G^{(u),\text{hor}}_{\fl{Ns},\fl{Nt}}}\mathbbm{1}\{G^{(u),\text{hor}}_{\fl{Ns},\fl{Nt}}\geq Nr_m\}]\Big].
\end{aligned}\label{subseq}
\end{align}
By \eqref{claimlem}, for each $M>0$ there exists $m=m(M)$ so that for all $N$ large enough 
\[ N^{-1}\log \E[e^{\xi G^{(u),\text{hor}}_{\fl{Ns},\fl{Nt}}}\mathbbm{1}\{G^{(u),\text{hor}}_{\fl{Ns},\fl{Nt}}\geq Nr_m\}]<-M.\]
Take a limit as $N \to \infty$ along any subsequence that achieves $\bar \gamma$ to see that  \eqref{subseq} implies
\begin{align*}
 \bar{\gamma}&\leq\max_{-m\leq i\leq m}\{\xi r_{i+1}-J^{(u),\text{hor}}_{s,t}(r_i)\}\vee \xi r_{-m}\vee (-M)\\
&\leq \Big(\sup_{r\in[0,s]}\{\xi r-J^{(u),\text{hor}}_{s,t}(r)\}+\xi\delta\Big)\vee \xi r_{-m}\vee (-M).
\end{align*}
The statement of the Lemma follows by letting $\delta\to0$, $m\to\infty$ and $M\to \infty$.

In order to repeat the estimates for $G^{(u),\text{ver}}_{\fl{Ns},\fl{Nt}}$ the equivalent statement for \eqref{claimlem} is  
\[
\lim_{r \to \infty} \varlimsup_{N\to\infty}N^{-1}\log\E[e^{\xi G^{(u),\text{ver}}_{\fl{Ns},\fl{Nt}}}\mathbbm{1}\{G^{(u),\text{ver}}_{\fl{Ns},\fl{Nt}}\geq Nr\}]=-\infty.
\]
We omit the remaining details; the interested reader can find a similar calculation in \cite{Geo-Sep-13-}.
\end{proof}

\begin{proof}[Proof of Theorem \ref{thm:boundary}]  

The existence of limit \eqref{lamhor} is verified by Lemma \ref{justif}. 
Then, use in sequence equations \eqref{eq:jhor*} and \eqref{eq:horl} and Proposition \ref{propJ} to write
%
%
\begin{align*}
\Lambda^{(u),\text{hor}}_{(s,t)}(\xi)&=\sup_{a\in[0,s]}\{aC^{(u)}_\mathcal B(\xi)+J^*_{s-a,t}(\xi)\}\\
&=\sup_{a\in[0,s]}\Big\{\inf_{\theta\in(p,1]}\{a\Big(C^{(u)}_\mathcal B(\xi)-C^{(\theta)}_\mathcal B(\xi)\Big)+sC^{(\theta)}_\mathcal B(\xi)-tC^{(\frac{\theta-p}{\theta(1-p)})}_\mathcal G(-\xi)\}\Big\}.
\end{align*}
The sup and inf can be interchanged by a minimax theorem (e.g. \cite{Kas-94}). The function inside the supremum is linear in $a$. Thus  the supremum will be reached at one of the two boundary points according to the sign of the difference 
\[
C^{(u)}_\mathcal B(\xi)-C^{(\theta)}_\mathcal B(\xi)
\begin{cases}
>0,\qquad&\text{if }\theta \in(u,1],\\
=0, \qquad &\text{if } \theta = u,\\
<0,\qquad&\text{if }\theta \in(p,u).
\end{cases}
\]
Therefore we have 
\be\label{lammin}
\begin{aligned}
\Lambda^{(u),\text{hor}}_{(s,t)}(\xi)=\inf_{\theta\in(u,1]}\{sC^{(u)}_\mathcal B(\xi)&-tC^{(\frac{\theta-p}{\theta(1-p)})}_\mathcal G(-\xi)\}\wedge
\{ sC^{(u)}_\mathcal B(\xi)-tC^{(\frac{u-p}{u(1-p)})}_\mathcal G(-\xi)\}\\
&\hspace{2.2cm}\wedge\inf_{\theta\in(p,u)}\{sC^{(\theta)}_\mathcal B(\xi)-tC^{(\frac{\theta-p}{\theta(1-p)})}_\mathcal G(-\xi)\}.
\end{aligned}
\ee
Note that, since $-C^{(\frac{\theta-p}{\theta(1-p)})}_\mathcal G(-\xi)$ is increasing in $\theta$, the first term on the right-hand side of \eqref{lammin} is always greater than the second one. So, it remains to compare the second and the third term. 

Call $\theta^*$ the minimizing point in $(p,1]$ for the expression $sC^{(\theta)}_\mathcal B(\xi)-tC^{(\frac{\theta-p}{\theta(1-p)})}_\mathcal G(-\xi$ \eqref{eq:umin} in this specific case. Then, there are two possible cases:
\begin{enumerate}[(1)] 
\item If $\theta^*\le u$, then 
\[ \Lambda^{(u),\text{hor}}_{(s,t)}(\xi)=\inf_{\theta\in(p,u)}\{sC^{(\theta)}_\mathcal B(\xi)-tC^{(\frac{\theta-p}{\theta(1-p)})}_\mathcal G(-\xi)\}=sC^{(\theta^*)}_\mathcal B(\xi)-tC^{(\frac{\theta^*-p}{\theta^*(1-p)})}_\mathcal G(-\xi)=\Lambda_{(s,t)}(\xi).\] 
\item If $\theta^* > u$ then 
\[  \Lambda^{(u),\text{hor}}_{(s,t)}(\xi)=sC^{(u)}_\mathcal B(\xi)-tC^{(\frac{u-p}{u(1-p)})}_\mathcal G(-\xi).\] 
\end{enumerate}
This concludes the proof of \eqref{lamhorc}. 
For the analogous result in the vertical case, first note that we may write  
\be\label{eq:vercoun}
\Lambda_{(s,t)}(\xi) = J^*_{s,t}(\xi)=\inf_{u\in(p,1]}\{tC_\mathcal G^{(\frac{u-p}{u(1-p)})}(\xi)-sC_\mathcal B^{(u)}(-\xi)\}.
\ee
That is possible to prove either by repeating the same computation in the subsection \ref{subsec:ecom} but starting $ G^{(u)}_{\fl{Ns},\fl{Nt}}-G^{(u)}_{\fl{Ns},0}=\sum_{j=1}^{\fl{Nt}}J^{(u)}_{\fl{Ns},j}$, or by computing \eqref{eq:vercoun} as in the proof of Theorem \ref{thm:lam} and observe that it gives the same result.

Then as in the case for the horizontal boundary only,
\begin{align*}
\Lambda^{(u),\text{ver}}_{(s,t)}(\xi)&=\sup_{a\in[0,t]}\{aC_\mathcal G^{(\frac{u-p}{u(1-p)})}(\xi)+J^*_{s,t-a}(\xi)\}\\
&=\sup_{a\in[0,t]}\Big\{\inf_{\theta\in(p,1]}\{a\Big(C_\mathcal G^{(\frac{u-p}{u(1-p)})}(\xi)-C_\mathcal G^{(\frac{\theta-p}{\theta(1-p)})}(\xi)\Big)+tC_\mathcal G^{(\frac{\theta-p}{\theta(1-p)})}(\xi)-sC_\mathcal B^{(\theta)}(-\xi)\}\Big\}.
\end{align*}
From this expression we see that we need to restrict $\xi\in[0,\log\frac{u(1-p)}{p(1-u)})$, otherwise $\Lambda^{(u),\text{ver}}_{(s,t)}(\xi)$ is not finite.
Then, as before, for $\xi\in[0,\log\frac{u(1-p)}{p(1-u)})$
\begin{align*}
&\Lambda^{(u),\text{ver}}_{(s,t)}(\xi)\\
&\hspace{0.6cm}=
\begin{cases}
\inf_{\theta\in(p,1]}\{tC^{(\frac{\theta-p}{\theta(1-p)})}_\mathcal G(\xi)-sC^{(\theta)}_\mathcal B(-\xi)\}= \Lambda_{(s,t)}(\xi)\qquad&\text{if }t\leq k^{(u)}(-\xi)s,\\
\inf_{\theta\in(p,u]}\{tC^{(\frac{\theta-p}{\theta(1-p)})}_\mathcal G(\xi)-sC^{(u)}_\mathcal B(-\xi)\}=tC^{(\frac{u-p}{u(1-p)})}_\mathcal G(\xi)-sC^{(u)}_\mathcal B(-\xi)\qquad&\text{if }t> k^{(u)}(-\xi)s.
\end{cases}
\end{align*}
This concludes the proof of the theorem.
\end{proof}

\begin{proof}[Proof of Theorem \ref{thm:boundarywhole}]
All the proof is based on \eqref{lambound}. First note that by Proposition \ref{propJ} and \eqref{eq:vercoun} we have that for any $u$
\be\label{dom}
\Lambda_{(s,t)}(\xi) \leq sC_\mathcal B^{(u)}(\xi)-tC_\mathcal G^{(\frac{u-p}{u(1-p)})}(-\xi)\quad\text{and} \quad \Lambda_{(s,t)}(\xi) \leq tC_\mathcal G^{(\frac{u-p}{u(1-p)})}(-\xi)-sC_\mathcal B^{(u)}(-\xi).
\ee
Therefore, if $\xi\in[\log\frac{u(1-p)}{p(1-u)}),\infty)$, $ \Lambda^{(u)}_{(s,t)}(\xi)=\infty$. 

If $\xi\in(0,\log\frac{u(1-p)}{p(1-u)})$ we define three regions in the quadrant by 
\[
L = \{ (s,t): t < k^{(u)}(-\xi) s\},\,\, M =  \{ (s,t): k^{(u)}(-\xi) s \le t \le k^{(u)}(\xi) s\}, \,\, U = \R^2_+\setminus (M\cup L).
\]
%
%
$k^{(u)}(\xi)$ is defined by \eqref{eq:ku} and one can directly verify that $k^{(u)}(-\xi)< k^{(u)}(\xi)$. 
For $(s,t) \in L$, $\Lambda^{(u)}_{s,t}(\xi)=sC^{(u)}_\mathcal B(\xi)-tC^{(\frac{u-p}{u(1-p)})}_\mathcal G(-\xi) = \Lambda^{(u),\rm hor}_{(s,t)}(\xi) $ by \eqref{lambound},\eqref{dom}, since $\Lambda^{(u),\rm ver}_{(s,t)}(\xi) = \Lambda_{(s,t)}(\xi)$. For $(s,t)\in U$ the arguments are symmetric, with $\Lambda^{(u)}_{(s,t)}(\xi)=tC^{(\frac{u-p}{u(1-p)})}_\mathcal G(\xi)-sC^{(u)}_\mathcal B(-\xi)$.

From \eqref{lambound}, \eqref{dom} and Theorem \ref{thm:boundary}, we have that 
\[
\Lambda^{(u)}_{(s,t)}(\xi) = 
\begin{cases}
\Lambda^{(u), \rm ver}_{(s,t)}(\xi), &\quad t \ge k^{(u)}(\xi)s,\\
\Lambda^{(u), \rm ver}_{(s,t)}(\xi)\vee \Lambda^{(u), \rm hor}_{(s,t)}(\xi), &\quad k^{(u)}(\xi) s< t < k^{(u)}(\xi)s,\\
\Lambda^{(u), \rm hor}_{(s,t)}(\xi), &\quad t \le k^{(u)}(-\xi)s.
\end{cases}
\]
By \eqref{lambound} and Theorem \ref{thm:boundary},  $\Lambda^{(u)}_{(s,t)}(\xi) $ is continuous in $(s,t)$. From this  and the fact that the middle branch above is linear in 
$(s,t)$, we conclude that the slope $ \ell^{(u)}(\xi) $ of the line 
\[
t = \ell^{(u)}(\xi) s  \Longleftrightarrow \{ (s,t) \in \R^2_+:  \Lambda^{(u), \rm ver}_{(s,t)}(\xi) = \Lambda^{(u), \rm hor}_{(s,t)}(\xi)\} 
\]
satisfies $k^{(u)}(\xi) \ge \ell^{(u)}(\xi) \ge k^{(u)}(-\xi)$ and therefore 
\[
\Lambda^{(u)}_{(s,t)}(\xi)=
\begin{cases}
sC^{(u)}_\mathcal B(\xi)-tC^{(\frac{u-p}{u(1-p)})}_\mathcal G(-\xi), &\quad \text{ if }  k^{(u)}(-\xi)) s \le t \le \ell^{(u)}(\xi) s,\\
tC^{(\frac{u-p}{u(1-p)})}_\mathcal G(\xi)-sC^{(u)}_\mathcal B(-\xi), &\quad \text{ if }  \ell^{(u)}(\xi) s < t \le  k^{(u)}(\xi)) s. 
\end{cases}
\]
This gives the theorem.
\end{proof}

\appendix
\section{A convex analysis proposition}
\label{app:conv}

\begin{proof}[Proof of Proposition \ref{prop:dousup}]
Fix $(s,t)\in \R_+^2$ and call $\nu=\frac{t}{s}$.  Observe that under the hypotheses of this proposition there exists a unique $u^*_{s,t}=\argmin_{u\in I}f_{s,t}(u)=u^*_{1,\nu}$. This minimum point can be eventually reached at $u^*_{s,t}=b$ if $f_{s,t}'(u)\leq0$ for all $u \in I$. In particular,  $u^*_{s,t}$ solves the equation  
\be \label{eq:devc}
f_{s,t}'(u)=sh'(u)+tg'(u)=0\implies t=-\frac{h'(u)}{g'(u)}s.
\ee
The largest value $-\frac{h'(u)}{g'(u)}$ can take is when $u = b$. For any $(s, t)$ above the line  
\be\label{cl}
t=-\frac{h'(b)}{g'(b)}s,
\ee
equation \eqref{eq:devc} has no solution and in fact $f_{s,t}'(u) < 0$ and $\argmin f_{s,t}(u) = b$. For any $(s, t)$ below this line \eqref{cl} a solution to \eqref{eq:devc} exists and is giving the minimizing argument. We call the line \eqref{cl} \textit{ the critical line}. 

The identity in \eqref{ii} implies that for all $z\in[0,s]$ and $\tilde{z}\in[0,t]$ the following inequalities hold 
\[
\Lambda(s-z,t)\leq f_{s-z,t}(u^*_{s-z,t}),\quad \Lambda(s,t-\tilde{z})\leq f_{s,t-\tilde{z}}(u^*_{s,t-\tilde{z}}).
\]
Fix a  $u\in I$ and subtract, from each side of the inequalities above, $f_{s-z,t}(u)$ and $f_{s,t-\tilde{z}}(u)$ respectively, to obtain 
\begin{align}
\Lambda(s-z,t)-f_{s-z,t}(u)&\leq f_{s-z,t}(u^*_{s-z,t})-f_{s-z,t}(u),\label{eq:sz}\\
\Lambda(s,t-\tilde{z})- f_{s,t-\tilde{z}}(u)&\leq f_{s,t-\tilde{z}}(u^*_{s,t-\tilde{z}})- f_{s,t-\tilde{z}}(u).\label{eq:sz'}
\end{align}
Since the minimizer is unique we have that $f_{s-z,t}(u^*_{s-z,t})-f_{s-z,t}(u)<0$ unless $u=u^*_{s-z,t}$ and $f_{s,t-\tilde{z}}(u^*_{s,t-\tilde{z}})- f_{s,t-\tilde{z}}(u)<0$ unless $u=u^*_{s,t-\tilde{z}}$. 
Set $u=u^*_{s,t}$ and substitute it in \eqref{eq:sz} and  \eqref{eq:sz'} 
\begin{align}
\Lambda(s-z,t)-f_{s-z,t}(u^*_{s,t})&\leq f_{s-z,t}(u^*_{s-z,t})-f_{s-z,t}(u^*_{s,t}),\label{eq:sz2}\\
\Lambda(s,t-\tilde{z})- f_{s,t-\tilde{z}}(u^*_{s,t})&\leq f_{s,t-\tilde{z}}(u^*_{s,t-\tilde{z}})- f_{s,t-\tilde{z}}(u^*_{s,t}).\label{eq:sz'2}
\end{align}
Note that \eqref{ii} implies that there exists a sequence $z_n\to z\in[0,s]$ or $\tilde{z}_n\to \tilde{z}\in[0,t]$ such that at least one of the following limits holds
\begin{align}
\Lambda(s-z_n,t)-f_{s-z_n,t}(u^*_{s,t})\to0,\label{zz}\\
 \Lambda(s,t-\tilde z_n)- f_{s,t-z_n'}(u^*_{s,t})\to0.\label{z}
\end{align}

If $t<-\frac{h'(b)}{g'(b)}s$ then the point $(s,t-\tilde{z})$ is below the critical line  for every $\tilde z\in [0,t]$. The point $(s-z,t)$ can be above or below the critical line according to the value of $z$. 
We analyse these two cases for the first supremum in \eqref{ii}. The case for the second supremum is identical to case (a) below, as for all $\tilde z$, the index point stays below the critical line.

\begin{enumerate}[(a)]
\item If $0\leq z <s+t\frac{g'(b)}{h'(b)}$, we have that both $u^*_{s,t},u^*_{s-z,t}\in (a,b)$. In particular 
\be \label{hg}
h'(u^*_{1,\nu})+\nu g'(u^*_{1,\nu})=0.
\ee
By the implicit function theorem we can take the derivative of the previous expression respect to $\nu$ and find 
\[
\frac{du^*_{1,\nu}}{d\nu}=-\frac{g'(u^*_{1,\nu})}{h''(u^*_{1,\nu})+\nu g''(u^*_{1,\nu})}>0.
\]
This implies that for all $z\in (0,s+t\frac{g'(b)}{h'(b)})$ and $\tilde{z}\in (0,t)$, $u^*_{s,t-\tilde{z}}<u^*_{s,t}<u^*_{s-z,t}$. 

We want to show that \eqref{zz} is possible if only $z_n\to0$  from which the result follows from continuity. The right hand side in \eqref{eq:sz2} is negative and therefore, by continuity we can argue that the supremum will be attained at one of the boundary points.
 Thus, we have only to show that
\begin{align}
\lim_{z\nearrow s+t\frac{g'(b)}{h'(b)}} f_{s-z,t}(u^*_{s-z,t})-f_{s-z,t}(u^*_{s,t})&<0.\label{lim1}
\end{align}
 For any fixed $z\in (0,s+t\frac{g'(b)}{h'(b)})$ we have that 
\[
f_{s-z,t}(u^*_{s-z,t})-f_{s-z,t}(u^*_{s,t})<0.
\]
Therefore we obtain the proof if we show that the last expression is decreasing in $z$. 
Take the derivative in $z$, use \eqref{hg}, recall that $u^*_{s,t}<u^*_{s-z,t}$ and $h(u)$ is an increasing function by hypothesis
\begin{align*}
&\frac{d}{dz}\Big((s-z)h(u^*_{s-z,t})+tg(u^*_{s-z,t})-[(s-z)h(u^*_{s,t})+tg(u^*_{s,t})]\Big)\\
&=-h(u^*_{s-z,t})+\Big((s-z)h'(u^*_{s-z,t})+tg'(u^*_{s-z,t})\Big)\frac{du^*_{s-z,t}}{dz}+h(u^*_{s,t})\\
&=h(u^*_{s,t})-h(u^*_{s-z,t})<0.
\end{align*}

\item If $s+t\frac{g'(b)}{h'(b)}\leq z\leq s$, we have that $u^*_{s-z,t}=b$. Note that $u^*_{s,t}<u^*_{s-z,t}=b$ in this case and therefore 
$f_{s-z,t}(b)-f_{s-z,t}(u^*_{s,t})<0$ for every $z\in[s+t\frac{g'(b)}{h'(b)},s]$. This implies that \eqref{zz} can never be true for $z \in (s+t\frac{g'(b)}{h'(b)}, s]$. But the boundary point $z = s+t\frac{g'(b)}{h'(b)}$ is also not optimal by continuity considerations and \eqref{lim1}. 
\end{enumerate}

Therefore, the potential maximum happens at $z = 0$. Similarly, this will be true for $\tilde z = 0$ and therefore $\Lambda(s,t) = f_{s, t}(u^*_{s,t})$ as required.
\end{proof}

\section{Law of large numbers and proof of Burke's property}
\label{app: LLN} 

\begin{proof}[Proof of Lemma \ref{burke}] 
We omit the superscripts and indices from the $I, J$ and we simply denote 
\[
\tilde I = \max\{I-J, \om\}, \quad \text{and} \quad \tilde J = (J-I+\om)^+.
\]
The marginal distributions of $(\tilde I, \tilde J, \alpha)$ can be computed directly, using equations \eqref{eq:4}, \eqref{eq:5}. For example, since $\alpha$ only takes the values $0$ or $1$ it suffices to compute 
\begin{align*}
\mathbb{P}\{\alpha=1\}&=\mathbb{P}\{ \min\{I,J+\om\}=1\}=\mathbb{P}\{ I=1, J+\om\geq1\} \\
&=u\Big(p+(1-p)\Big(1- \frac{u-p}{u(1-p)} \Big)\Big)=p.
\end{align*}
The remaining calculations are left to the reader. 
 
The proof of independence goes by calculating the Laplace transform 
of the triple $(\tilde I, \tilde J, \alpha)$. Let $x \in \R, z \in \R$ and  
$y > \log[p(1-u)/(u(1-p))]$.  Recall that $u \in (p,1]$. Then compute, using \eqref{eq:4}and \eqref{eq:5}, the joint Laplace transform 
\allowdisplaybreaks
\begin{align*}
\E&(e^{-x \tilde I - y\tilde J - z\alpha}) = \mathbb{E}[e^{-x\max\{I-J,\om\}-y(J-I+\om)^+-z\min\{I,J+\om\}}]\\
&=pu\mathbb{E}[e^{-x(\max\{1-J,1\})-yJ-z\min\{1,J+1\}}]+p(1-u)\mathbb{E}[e^{-x-y(J+1)^+}]\\
&\phantom{xxxxxxxxxxxxx}+(1-p)u\mathbb{E}[e^{-x(1-J)^+-y(J-1)^+-z(1\wedge J)}]+(1-p)(1-u)\mathbb{E}[e^{-yJ}]\\
&=pu\frac{u-p}{u(1-p)}e^{-(x+z)}\sum_{j=0}^\infty\bigg(\frac{p(1-u)}{u(1-p)}\bigg)^je^{-yj}\\
&\phantom{xxxxx}+p(1-u)\frac{u-p}{u(1-p)}e^{-(x+y)}\sum_{j=0}^\infty\bigg(\frac{p(1-u)}{u(1-p)}\bigg)^je^{-yj}\\
&\phantom{xxxxxxxxxx}+(1-p)u\frac{u-p}{u(1-p)}\bigg(e^{-x}+\sum_{j=1}^\infty\bigg(\frac{p(1-u)}{u(1-p)}\bigg)^je^{-y(j-1)-z}\bigg)\\
&\phantom{xxxxxxxxxxxxxxxxxxxxxxxx}+(1-p)(1-u)\sum_{j=0}^\infty\bigg(\frac{p(1-u)}{u(1-p)}\bigg)^je^{-yj}\\
&=\frac{\frac{u-p}{u(1-p)}}{1-\frac{p(1-u)}{u(1-p)}e^{-y}}\bigg(pue^{-(x+z)}+p(1-u)e^{-(x+y)}+(1-p)(1-u)\bigg)\\
&\phantom{xxxxx}+(1-p)u\frac{\frac{u-p}{u(1-p)}}{1-\frac{p(1-u)}{u(1-p)}e^{-y}}\bigg[e^{-x}\bigg(1-\frac{p(1-u)}{u(1-p)}e^{-y}\bigg)+e^{-z}\frac{p(1-u)}{u(1-p)}\bigg]\\
&=\frac{\frac{u-p}{u(1-p)}}{1-\frac{p(1-u)}{u(1-p)}e^{-y}}\bigg(pue^{-(x+z)}+ (1-p)(1-u)+ (1-p)ue^{-x}+p(1-u)e^{-z}\bigg)\\
&=\E(e^{-y\tilde J}) \E (e^{-x\tilde I})\E(e^{-z\alpha})\qedhere
\end{align*}
\end{proof}

\begin{proof}[Proof of Corollary  \ref{cor:downrightpath}]
The proof is inductive. Consider the countable set of paths $\Psi$ that connect the $y$-axis to the $x$-axis. The trivial case is when $\mathcal I _{\psi_0}=\emptyset$ (i.e. $\psi_0$ is the union of the two axes,  $\psi_0\in\Psi$) and  then the statement reduces to the independence of the $\om_{i,j}$'s on the $x$ and $y$ axes which is true by the definition of the environment.

Assume that for a $\psi\in \Psi$ the statement holds. We say that a lattice vertex $v_{i_0}$ on $\psi $ $(i,j) \in \Z^2_+$ is a west-south corner  of $\psi$ if 
\[
(v_{ i_0-1},v_{i_0},v_{i_0+1})=((i,j+1),(i,j),(i+1,j)).
\]
Now define a new path $\tilde \psi $ by replacing $v_{i_0}$ with $\tilde v_{i_0}=(i+1,j+1)$ and keep all the other points intact which means that $v_{i}=\tilde v_{i}$ for $i\not =i_0$. In this way we have $\mathcal I _{\tilde\psi}=\mathcal I _{\psi}\cup \{(i,j)\}$.

Going from $\psi$ to $\tilde \psi $ we have also a change in the set of random variables in \eqref{eq:dwrp}. In fact
\be\label{eq:oldvar}
\{ I_{i+1,j},J_{i,j+1}\}
\ee
have been replaced by
\be\label{eq:newvar}
\{ I_{i+1,j+1},J_{i+1,j+1},\alpha_{i+1,j+1}\}.
\ee
By \eqref{eq:4} and \eqref{eq:5} the variables in \eqref{eq:newvar} are determined by \eqref{eq:oldvar} and $\om_{i+1,j+1}$. By construction $\om_{i+1,j+1}$ is independent of \eqref{eq:dwrp} for the $\psi$ under consideration. By construction the triple $\{ I_{i+1,j},J_{i,j+1},\om_{i+1,j+1}\}$ are independent random variables and by the induction assumption we have they are in turn independent of the all other variables \eqref{eq:dwrp}. Finally   Lemma \ref{burke} implies that also the triple $\{ I_{i+1,j+1},J_{i+1,j+1},\om_{i,j}\}$ are independent random variables with the correct marginal distribution and they are independent of all the random variables of $\tilde \psi $. All these observations prove that also $\tilde \psi $ satisfies the statement of the corollary.

Note that if we start with $ \psi_0$, we can build a path $\psi\in \Psi$ by flipping west-south corners finitely many times. The induction argument guarantees that class $\Psi$  satisfies the corollary. 

The general statement follows also for an arbitrary down-right path $\psi$ using the independence of finite subcollections. Consider any square $\mathcal R=\{i\leq 0, j\leq M\}$ large enough so that the corner $(M,M)$ lies outside $\psi\cup\mathcal I _{\psi}$. The $\alpha$ and $L(\psi)$ variables associated to $\psi$ that lie in $\mathcal R$ are a subset of the variables of the path $\tilde\psi$ that goes through the points $(0,M),(M,M)$ and $(M,0)$. This path $\tilde \psi$ connects the axes so the first part of the proof applies to it. Thus the variables \eqref{eq:dwrp} that lie inside an arbitrarily large square are independent.
\end{proof}

\subsection{Laws of large numbers}

\begin{proof}[Proof of Theorems \ref{thm:llnp}, \ref{thm:LLNp}]

Recall the definitions of  $g_{pp}^{(u),\text{ver}}(s,t)$ and  $g_{pp}^{(u),\text{hor}}(s,t)$ from Theorem \ref{thm:HOR}.
In the sequence we use freely the facts that $g_{pp}(s,t)$ is 1-homogeneous and concave.

If $t<\frac{1-p}{p}s$, start from equation \eqref{eq:varform}. Divide by $N$ to obtain the macroscopic variational formulation
\begin{align}
g_{pp}^{(u)}(s,t)&=g_{pp}^{(u),\text{hor}}(s,t)\bigvee g_{pp}^{(u),\text{ver}}(s,t) \notag\\
&=\sup_{0\leq z\leq s}\{g_{pp}^{(u)}(z,0)+g_{pp}(s-z,t)\}\bigvee\sup_{0\leq \tilde{z}\leq t}\{g_{pp}^{(u)}(0, \tilde{z})+g_{pp}(s,t-\tilde{z})\} \notag\\
&= \sup_{0\leq z\leq s}\{z \E(I^{(u)})+g_{pp}(s-z,t)\}\bigvee\sup_{0\leq \tilde{z}\leq t}\{ \tilde{z} \E(J^{(u)})+g_{pp}(s,t-\tilde{z})\}.\label{eq:maxpath}
\end{align}

How we obtain equation \eqref{eq:maxpath} is not immediate to see. Since this step is technical and it is not the main goal of this proof, we postpone it until the end. Therefore assume for the moment that \eqref{eq:maxpath}  holds. Subtract $g_{pp}^{(u)}(s,t)$ to either side of \eqref{eq:maxpath} 
\[
0=\sup_{0\leq z\leq s}\Big\{g_{pp}(s-z,t)-\Big[(s-z)u+t\frac{p(1-u)}{u-p}\Big]\Big\}\bigvee\sup_{0\leq \tilde{z}\leq t}\Big\{ g_{pp}(s,t-\tilde{z})-\Big[(t-\tilde{z})\frac{p(1-u)}{u-p}+su\Big]\Big\}.
\]

We use Proposition \ref{prop:dousup} by identifying as $I=(p,1]$, $\Lambda(s,t)=g_{pp}(s,t)$, $h(u)=s$, $g(u)=\frac{p(1-u)}{u-p}$ and therefore $f_{s,t}(u)=su+t\frac{p(1-u)}{u-p}$. Note that $h'(u)>0$, $g'(u)<0$ for every $u\in(p,1]$ and in particular $f''_{s,t}(u)>0$ for every $(s,t)\in \R^2_+$.  Moreover $\lim_{u\searrow p}f_{s,t}(u)=\infty$ and $f_{s,t}(1)=s<\infty$. Therefore 
\be\label{eq:14}
g_{pp}(s,t)=\min_{u\in(p,1]}\Big\{su+t\frac{p(1-u)}{u-p}\Big\}=\big(\sqrt{ps}+\sqrt{(1-p)t}\big)^2-t,\qquad \text{if }t<s\frac{1-p}{p}.
\ee

If $t\geq \frac{1-p}{p}s$, We want to find an upper and a lower bound for $ G_{\fl{Ns}, \fl{Nt}}$. The upper bound is trivial since by model definition $ G_{\fl{Ns}, \fl{Nt}}\leq \fl{Ns}$. For the lower bound, force a macroscopic distance from the critical line, i.e. assume that it is possible to find a $\e > 0$ so that the sequence of endpoints $(\fl{Ns}, \fl{Nt})$ satisfy
\be \label{eq:flatedge}
\varliminf_{N \to \infty} \frac{\fl{Nt}}{\fl{Ns}}  \ge \frac{1-p}{p} + \e. 
\ee
%

Then consider the following strategy:  
construct an approximate maximal path $\pi$ for  $G_{\fl{Ns}, \fl{Nt}}$, knowing that for large $ \fl{Nt}\geq (\frac{1-p}{p} + \e)\fl{Ns}$. $\pi$ starts from $(0,0)$ and moves up until it finds a weight  to collect horizontally on his right. After that this procedure repeats. For each iteration of this procedure, the vertical length of this path increases by a random Geometric($p$) length, independently of the past. Define $Y\sim$ Geometric($p$) with range on $0, 1, ...$. 
By construction, we have 
\[
\Big\{ \sum_{i=1}^{\fl{Ns}}Y_i > \fl{Nt} \Big\} \supseteq \{ G_{\fl{Ns}, \fl{Nt}} < \fl{Ns}\}.
\]
The relation on $(s,t)$ implies that the larger event above is large deviation event, and therefore by the Borel-Cantelli lemma,  $G_{\fl{Ns}, \fl{Nt}} = \fl{Ns}$.
Scaling by $N$ and letting it tend to $\infty$ completes the proof.

We finally prove  \eqref{eq:maxpath}.
For a lower bound, fix any $z \in [0, s]$ and $\tilde z \in [0, t]$  . Then if we move on the horizontal axis
\[
G^{(u)}_{\fl{Ns},\fl{Nt}} \ge \sum_{i=1}^{\fl{Nz}} I^{(u)}_{i, 0} + G_{(\fl{Nz}, 1), (\fl{Ns}, \fl{Nt})}. 
\]
Divide by $N$. Observe that the left hand side converges  a.s.\ to $g^{(u)}_{pp}(s,t)$. While  the first term on the right converges a.s.\ to $z \E(I^{u})$. The second on the right, converges in probability to $g_{pp}(s-z, t)$. In particular, we can find a subsequence $N_k$ such that the convergence is almost sure for the second term. Taking limits on this subsequence, we conclude 
\[
g_{pp}^{(u)}(s,t) \ge z \E(I^{u}) + g_{pp}(s-z, t).
\] 
Since $z$ is arbitrary we can take supremum over  $z$ in both sides of the inequality above. 
The same arguments will work if we move on the vertical axis. Thus, we obtain the lower bound for \eqref{eq:maxpath}. 

For the upper bound, we partition the two axes. Fix $\e,\tilde \e>0$ and let $\{ 0 =q_0, \e=q_1, 2\e=q_2, \ldots, s\fl{\e^{-1}}\e,  s=q_M \}$ a partition of $(0,s)$ and  $\{ 0 =q_0, \tilde\e=q_1, 2 \tilde\e=q_2, \ldots, t\fl{ \tilde\e^{-1}} \tilde\e,  t=q_{\tilde M} \}$ a partition of $(0,t)$. 
The maximal path that utilises $G^{(u)}_{N,N}$ has to exit between $\fl{Nk\e}$ and $\fl{N(k+1)\e}$ for some $k$ if it chooses to go through the $x$-axis and between $\fl{N\tilde k\tilde\e}$ and $\fl{N(\tilde k+1)\tilde\e}$ for some $\tilde k$ if it goes through the $y$-axis. Therefore,  
we may write 
\begin{align*}
G^{(u)}_{\fl{Ns},\fl{Nt}}  &\le \max_{0 \le k \le \fl{\e^{-1}}}\bigg\{\sum_{i=1}^{\fl{N(k+1)\e}} I^{(u)}_{i, 0} + G_{(\fl{Nk\e}, 1), (\fl{Ns},\fl{Nt})}\bigg\}\\
&\phantom{xxxxxxxxxxx}\bigvee \max_{0 \le \tilde k \le \fl{\tilde\e^{-1}}}\bigg\{\sum_{j=1}^{\fl{N(\tilde k+1)\tilde\e}} J^{(u)}_{0, j} + G_{(1, (\fl{N\tilde k\tilde\e})), (\fl{Ns},\fl{Nt})}\bigg\}.
\end{align*}
Divide by $N$. The right-hand side converges in probability to the constant 
\begin{align*}
 &\max_{0 \le k \le \fl{\e^{-1}}}\{(k+1)\e u + g_{pp}(s-\e k, t)\}\\
&\phantom{xxxxxxx}\bigvee \max_{0 \le \tilde k \le \fl{\tilde\e^{-1}}}\Big\{(\tilde k+1)\tilde\e \frac{p(1-u)}{u-p} + g_{pp}(s, t-\tilde\e\tilde k) \Big\}\\
&=\Big( \max_{0 \le k \le \fl{\e^{-1}}}\{k\e u + g_{pp}(s-\e k, t)\} + \e u\Big)\\
&\phantom{xxxxxxx}\bigvee \Big(\max_{0 \le \tilde k \le \fl{\tilde\e^{-1}}}\bigg\{\tilde k\tilde\e \frac{p(1-u)}{u-p} + g_{pp}(s, t-\tilde\e\tilde k) \Big\} +\tilde\e\frac{p(1-u)}{u-p}\Big)\\
&=\Big(\max_{q_k }\{q_k u + g_{pp}(s-q_k, t)\} + \e u\Big)\\
&\phantom{xxxxxxx}\bigvee \Big(\max_{q_{\tilde k}}\Big\{q_{\tilde k} \frac{p(1-u)}{u-p} + g_{pp}(s, t-q_{\tilde k}) \Big\} +\tilde\e\frac{p(1-u)}{u-p}\Big)\\
&\le \Big(\sup_{0 \le z \le s}\{z u + g_{pp}(s-z, t)\} + \tilde\e u\Big)\\
&\phantom{xxxxxxx}\bigvee \Big(\max_{0\le \tilde z \le t}\Big\{\tilde z \frac{p(1-u)}{u-p} + g_{pp}(s, t- \tilde z) \Big\} +\tilde\e\frac{p(1-u)}{u-p}\Big).
\end{align*}
The convergence becomes a.s. \ on a subsequence. The upper bound for \eqref{eq:maxpath} now follows by letting $\e \to 0$ and $\tilde\e \to 0$ in the final equation.   \end{proof}

\section{Basic properties of the rate function} 
\label{app:prop}

\begin{proof}[Proof of Theorem \ref{JInt}]
First we prove the existence of limit \eqref{J}. Take $m,n\in\N$ and an error due to the floor function ${\bf x}_{m,n}\in(0,1)^2 $ such that 
$(\fl{(m+n)s},\fl{(m+n)t})=(\fl{ms},\fl{mt})+(\fl{ns},\fl{nt})+{\bf x}_{m,n}$. We have
\begin{align*}
\P\{&G_{\fl{(m+n)s},\fl{(m+n)t}}\geq(m+n)sr\}\\
& \geq\P\{G_{\fl{ms},\fl{mt}}+G_{(\fl{ms},\fl{mt}),(\fl{(m+n)s},\fl{(m+n)t})}\geq(m+n)r\},  \quad &&\text{by superadditivity}\\
& \geq\P\{G_{\fl{ms},\fl{mt}}\geq mr\}\P\{G_{\fl{ns},\fl{nt}}\geq nr\}\P\{G_{\fl{{\bf x}_{m,n}}}\geq 0\}, \quad &&\text{by independence}.
\end{align*}
By \eqref{rval} $ \P\{G_{\fl{x_{m,n}}}\geq 0\}=1$. Take logarithms in the last inequality; then by Fekete's lemma
the limit 
\[
\lim_{N\to\infty}N^{-1}\log\P\{G_{\fl{Ns}, \fl{Nt}}\geq Nr\}
\]
exists for any $(s,t)\in\R^2\setminus\{0\}$ and $r\in[0,s]$ and in fact equals  $\sup_{N} N^{-1} \log \P\{G_{\fl{Ns},\fl{Nt}}\geq Nr\}$. 
The value of the limit is now denoted by $- J_{s,t}(r)$.

From the superadditivity of $G $ we can also obtain the convexity of the limit. Pick any $\lambda\in(0,1)$ and define the triple $((s,t),r)=\lambda((s_1,t_1),r_1)+(1-\lambda)((s_2,t_2),r_2)$ with $r_1\in[0,s_1]$ and $r_2\in[0,s_2]$. Then
\begin{align*}
&N^{-1}\log\P\{G_{\fl{Ns},\fl{Nt}}\geq Nr\}\\
&\hspace{2cm}\geq \lambda(\lambda N)^{-1}\log \P\{G_{\fl{N\lambda s_1},\fl{N\lambda t_1}}\geq N\lambda r_1\}\\
&\hspace{2.5cm}+(1-\lambda)((1-\lambda )N)^{-1}\log\P\{G_{\fl{N(1-\lambda)s_2},\fl{N(1-\lambda)t_2}}\geq N(1-\lambda)r_2\}.
\end{align*}
Multiply both sides by $-1$ and invert the sign of the inequality to obtain for $N\to\infty$ 
\be
J_{s,t}(r)\leq\lambda J_{s_1,t_1}(r_1)+(1-\lambda)J_{s_2,t_2}(r_2).
\ee
From \eqref{rval} we know that $J$ is finite and we have just proven that it is also convex. This implies that $J$ is continuous on $A$ and upper semicontinuous on the whole set $\bar A$, from Theorems $10.1$ and $10.2$ in \cite{Roc-70}. Moreover, $J_{s,t}(r)$ on $A$ can be uniquely extended to a continuous function on $\bar A$ by Theorem 10.3 in \cite{Roc-70}. 

Finally, the law of large numbers for the last passage time implies $J_{(s,t)}(r)=0$ for $r<g_{pp}(s,t)$ and then by continuity for $r\leq g_{pp}(s,t)$. Use the same method of proof of Proposition $3.1($b$)$ of  \cite{Com-Yos-11-} to get the concentration inequality: 
\be
\P\{|G_{\fl{Ns},\fl{Nt}}-\E[G_{\fl{Ns},\fl{Nt}}]|\geq N\e\}\leq 2e^{-c\e^2n}\quad\forall n\in\N.
\ee
This holds for a given $(s,t)\in \R^2_+$, and $\e>0$. Constant $c>0$ will depend on $s, t, \e$.
Since $N^{-1}\E[G_{\fl{Ns},\fl{Nt}}]\to g_{pp}(s,t)$, this implies that $J_{s,t}(r)>0$ for $r>g_{pp}(s,t)$ (without excluding the value $\infty$).
\end{proof}

\begin{lemma}[Continuity in the macrosocpic directions] \label{lem:1}
Let $(s,t)\in\R_{>0}^2$ and $\mathbf u_N=(s_{N},t_{N})\in\Z^2_+$ an increasing sequence such that $N^{-1} \mathbf u_N\to (s,t)$. Then for $r\in [0, s)$
\be
\lim_{N\to\infty}N^{-1}\log\P\{G_{s_{N},t_{N}}\geq Nr\}=-J_{s,t}(r).
\ee
\end{lemma}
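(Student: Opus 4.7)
The plan is to run a sandwich argument that reduces the claim to the already established limit
\[
\lim_{N \to \infty} N^{-1}\log \P\{ G_{\fl{Ns}, \fl{Nt}} \ge Nr \} = -J_{s,t}(r)
\]
from Theorem \ref{JInt}, together with the continuity of $J_{\cdot,\cdot}(r)$ on $\bar A$. The key monotonicity input is the following: since the potential $V$ in \eqref{eq:potential} is non-negative and any path from $(0,0)$ to $(m,n)$ can be extended to $(m',n')$ by appending $e_1,e_2$ steps whenever $(m,n) \le (m',n')$ coordinate-wise, we have
\[
G_{(0,0),(m,n)} \le G_{(0,0),(m',n')} \qquad \text{whenever } (m,n) \le (m',n').
\]

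First I would fix $r \in [0,s)$ and pick $s^\pm, t^\pm$ with $r < s^- < s < s^+$ and $0 < t^- < t < t^+$. Because $N^{-1}(s_N, t_N) \to (s,t)$, for all $N$ sufficiently large one has
\[
(\fl{Ns^-}, \fl{Nt^-}) \le (s_N, t_N) \le (\fl{Ns^+}, \fl{Nt^+})
\]
coordinate-wise, so the monotonicity just stated yields
\[
\P\{G_{\fl{Ns^-}, \fl{Nt^-}} \ge Nr\} \le \P\{G_{s_N, t_N} \ge Nr\} \le \P\{G_{\fl{Ns^+}, \fl{Nt^+}} \ge Nr\}.
\]

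Next I would take logarithms, divide by $N$, let $N \to \infty$, and apply Theorem \ref{JInt} to both the sandwiching sequences (valid since $r < s^- < s^+$ keeps us inside the domain where the probabilistic definition of $J$ is available):
\[
-J_{s^-, t^-}(r) \le \varliminf_{N\to\infty} N^{-1}\log \P\{G_{s_N, t_N}\ge Nr\} \le \varlimsup_{N\to\infty} N^{-1}\log \P\{G_{s_N, t_N}\ge Nr\} \le -J_{s^+, t^+}(r).
\]
Finally I would let $s^\pm \to s$ and $t^\pm \to t$ and invoke the continuity of $J_{\cdot,\cdot}(r)$ on the closure $\bar A$ (Theorem \ref{JInt}), which pinches the $\varliminf$ and $\varlimsup$ together at $-J_{s,t}(r)$.

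There is essentially no obstacle beyond bookkeeping: the only thing to check is that $r < s$ guarantees we can place $s^-$ strictly between $r$ and $s$ so that $((s^-,t^-),r)$ lies in the interior of $\bar A$, where Theorem \ref{JInt} gives both existence of the exponential limit and continuity of $J$ needed to close the sandwich.
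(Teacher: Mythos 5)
Your proof is correct and follows essentially the same strategy as the paper: a monotonicity sandwich for $G$ in the endpoint, followed by Theorem \ref{JInt} and the continuity of $J$ to pinch the $\varliminf$ and $\varlimsup$. The only (cosmetic) difference is that the paper sandwiches $\mathbf u_N$ along the fixed direction $(s,t)$ at perturbed scales $\ell_N\le N\le m_N$ and closes with continuity of $J_{s,t}(\cdot)$ in $r$ (via an $r-\e$ shift), whereas you sandwich at scale $N$ between perturbed directions $(s^\pm,t^\pm)$ and close with continuity of $J_{\cdot,\cdot}(r)$ in $(s,t)$; both continuity properties are provided by Theorem \ref{JInt}.
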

\begin{proof}
Since $\mathbf u_N$ and $(\fl{Ns}, \fl{Nt})$ are non-decreasing in $N$, for each $N$ we can find two sequences $\ell_N$ and $m_N$ such that
\[\fl{\ell_N(s,t)}\leq\mathbf u_N\leq\fl{m_N(s,t)}\quad \text{with } N-m_N,N-\ell_N=o(N).\]


Then it is immediate that 
\[G_{\fl{\ell_Ns}, \fl{\ell_Nt}} \leq G_{s_N, t_N}\leq G_{\fl{m_Ns}, \fl{m_Nt}},\]
which gives 
\begin{align*}
\P\{G_{\fl{m_Ns}, \fl{m_Nt}}\geq Nr\}
&
\ge \P\{G_{s_N, t_N}\geq Nr\} \ge \P\{G_{\fl{\ell_Ns}, \fl{\ell_Nt}}\geq Nr\}.
\end{align*}
Taking the $\varlimsup$ of both sides and by the continuity of the rate function we have
\begin{align*}
\varlimsup_{N\to\infty}N^{-1}\log \P\{G_{s_N, t_N}&\geq Nr\}\leq \lim_{N\to\infty}N^{-1}\log \P\{G_{\fl{m_Ns}, \fl{m_Nt}}\geq Nr\}\\
&\leq \varlimsup_{N\to\infty}m_N^{-1} (\frac{m_N}{N})\log \P\{G_{\fl{m_Ns}, \fl{m_Nt}}\geq m_Nr - (m_N - N)r\}\\
&\leq \varlimsup_{N\to\infty}m_N^{-1} (\frac{m_N}{N})\log \P\{G_{\fl{m_Ns}, \fl{m_Nt}}\geq m_N(r-\e)\} \\
&\phantom{xxxxxxxxxxxxxxxxxx} \text{ for any $\e>0$ and $N$ large enough}\\
&= -J_{s,t}(r - \e).
\end{align*}
Then let $\e \to 0$ and invoke the continuity of $J$ for the upper bound. Same arguments are valid for the lower bound, using $\varliminf_{N \to \infty}$ .
\end{proof}

From Theorem \ref{JInt} we have that $J_{s,t}(r)$ can be continuously extended to the boundary of the domain $A = \{ (s,t,r):  J_{s,t}(r) < \infty \}$,
\[ \partial A = \{s = 0, t \ge 0, r \le 0 \} \cup \{ t = 0, s \ge r \vee 0 \} \cup \{ s = r, t \ge 0 \}. \]
 It will be convenient to understand the values of the continuation of $J_{s,t}(r)$ on $\partial A$.

 For any $s,t > 0$ and $r \le 0$, $J_{s,t}(r) = 0$. Therefore, we will have that 
 \[
 J_{s,0}(r) = J_{0, t}( r) = 0, \quad r \le 0. 
 \]  
 Now for the $r > 0$ case. Since we want $J_{s,0}(r)$ with $(s \ge r)$ continuous we define 
 $J_{s,h}(r) = \lim_{h \to 0} J_{s, h}(r)$. An approximation using thin rectangles as in \cite{Geo-Sep-13-} gives that 
 \[
 J_{s,0}(r) = s I_{\mathcal B}(r/s) = r \log \frac{r}{sp} +  \big(s - r \big)\log \frac{1 - r/s}{1 - p}.
 \]
Recall that $I_{\mathcal B}$ is the Cram\'er rate function for sums of i.i.d. $\om_i \sim$ Bernoulli$(p)$.
This discussion is summarised in Corollary \ref{thm:JJJ}.

\begin{lemma}[Infimal convolutions]\label{srf}
For each $N$ let $L_N$ and $Z_N$ be two independent random variables. Assume their rate functions
\begin{align}
\lambda(s)&=-\lim_{N\to\infty}N^{-1}\log\P\{L_N\geq Ns\},\\
\phi(s)&=-\lim_{N\to\infty}N^{-1}\log\P\{Z_N\geq Ns\}
\end{align}
exists and 
\begin{enumerate}
\item $\lambda(s)$ is finite in $(-\infty,b)$ with $b\in \bar \R$ and $\lambda(s) = \infty$ when $s > b$. 
\item $\lambda$ is continuous at all points for which is finite and lower semi-continuous on $\R$. 
\item $\phi(s)$ is finite for all $s\in\R$. 
\item $\lambda(a_\lambda)=\phi(a_\phi)=0$ for some $a_\lambda,a_\phi\in\R$. 
\end{enumerate}
Then for $r\in \R$
\be
\begin{aligned}\label{eq:af}
\lim_{N\to\infty}N^{-1}\log\P\{L_N+&Z_N\geq Nr\}\\
&=\begin{cases}
-\inf_{a_\lambda\leq s\leq b\wedge (r-a_\phi)}\{\phi(r-s)+\lambda(s)\},\qquad &r>a_\phi+a_\lambda,\\
0,\qquad&r\leq a_\phi+a_\lambda.
\end{cases}
\end{aligned}
\ee
\end{lemma}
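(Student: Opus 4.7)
The overall strategy is to obtain matching exponential-scale upper and lower bounds for $\P\{L_N+Z_N\geq Nr\}$. The trivial case $r\le a_\phi+a_\lambda$ is handled directly: for $\delta>0$ with $r+2\delta\le a_\phi+a_\lambda$, independence together with $\lambda(a_\lambda-\delta)=\phi(a_\phi-\delta)=0$ give
\begin{equation*}
\P\{L_N+Z_N\geq Nr\}\ge \P\{L_N\ge N(a_\lambda-\delta)\}\,\P\{Z_N\ge N(a_\phi-\delta)\}=e^{o(N)},
\end{equation*}
and combined with $\P\le 1$ this forces $N^{-1}\log\P\to 0$. From here on I assume $r>a_\phi+a_\lambda$.

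For the lower bound on the probability I would use a single split: for any $s\in[a_\lambda,\,b\wedge(r-a_\phi)]$ both $\lambda(s)$ and $\phi(r-s)$ are finite, and independence yields
\begin{equation*}
\P\{L_N+Z_N\geq Nr\}\ge \P\{L_N\geq Ns\}\,\P\{Z_N\geq N(r-s)\}.
\end{equation*}
Passing to $N^{-1}\log$ and $\liminf$ produces $-\lambda(s)-\phi(r-s)$; optimising over $s$ gives the desired lower bound. Note that $\lambda$ and $\phi$ are non-decreasing (this is inherited from monotonicity of tail probabilities in $s$), so outside the interval $[a_\lambda,\,b\wedge(r-a_\phi)]$ one of the two terms vanishes while the other is no smaller than its value at the nearest boundary point; hence the infimum over $\R$ equals the infimum over this interval.

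The upper bound on the probability is the technical heart, and I would handle it by coarse graining. Fix $\e>0$ small and $M>0$ large. When $b<\infty$, partition $[-M,\,b+\e]$ by points $x_0=-M<x_1<\cdots<x_K=b+\e$ of mesh at most $\e$ (if $b=\infty$, partition instead $[-M,M']$ and treat $\{L_N\ge NM'\}$ as a separate tail term that is sent to zero as $M'\to\infty$). A union bound followed by independence gives
\begin{align*}
\P\{L_N+Z_N\geq Nr\}
&\le \P\{L_N\ge N(b+\e)\}+\P\{Z_N\ge N(r+M)\}\\
&\quad +\sum_{k=0}^{K-1}\P\{L_N\ge Nx_k\}\,\P\{Z_N\ge N(r-x_{k+1})\},
\end{align*}
where the second term arises from $\{L_N+Z_N\ge Nr,\,L_N<-NM\}\subseteq\{Z_N\ge N(r+M)\}$. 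The first summand decays faster than any exponential because $\lambda(b+\e)=+\infty$ by hypothesis~(1); the second is at most $e^{-N\phi(r+M)+o(N)}$, and its exponent is made arbitrarily negative by letting $M\to\infty$. The sum has $K=K(\e,M)$ many terms, constant in $N$, so taking $N^{-1}\log$ and $N\to\infty$ yields
\begin{equation*}
\limsup_{N\to\infty}N^{-1}\log\P\{L_N+Z_N\geq Nr\}\le \max_{0\le k<K}\bigl\{-\lambda(x_k)-\phi(r-x_{k+1})\bigr\}\vee(-\phi(r+M)).
\end{equation*}
Letting $\e\to 0$ and invoking continuity of $\lambda$ on $(-\infty,b)$ and of $\phi$ on $\R$ converts the maximum into $-\inf_{s\in[-M,b]}\{\lambda(s)+\phi(r-s)\}$; the monotonicity argument from the lower bound trims this to $[a_\lambda,\,b\wedge(r-a_\phi)]$, and finally $M\to\infty$ removes the remaining boundary term.

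The main obstacle is the behaviour at the edge $s=b$ of the effective domain of $\lambda$: if the optimiser of $\lambda(s)+\phi(r-s)$ sits at $s=b$, one must verify that the mass of $L_N$ just above $Nb$ does not contribute at scale $e^{-cN}$. This is exactly what hypothesis~(1), through $\lambda(b+\e)=\infty$, is designed to supply. A secondary technical point is that the far-left-tail estimate requires $\phi(r+M)\to\infty$ as $M\to\infty$; this is not explicit in the hypotheses but is automatic in every application of the lemma in the paper, where $\phi$ arises as a Cram\'er-type rate function for sums of i.i.d.\ Bernoulli or geometric increments and is therefore coercive.
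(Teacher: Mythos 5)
Your overall architecture is the same as the paper's: lower bound by a single split plus independence, upper bound by a union bound over a coarse-grained partition of the values of $L_N$, then refine the mesh and invoke continuity of $\lambda$ and $\phi$. The one genuine gap is your treatment of the residual tail terms in the upper bound. You cut the partition at $-M$ (and at $M'$ when $b=\infty$) and then claim the leftover exponents $-\phi(r+M)$, $-\lambda(M')$ are ``made arbitrarily negative'' as $M,M'\to\infty$. That requires coercivity of $\phi$ (and of $\lambda$ when $b=\infty$), which is not among the hypotheses: $\phi$ is only assumed finite with a zero, and it may be bounded. For instance, take $Z_N=0$ except on an event of probability $e^{-N}$ where it is astronomically large; then $\phi\equiv 1$ on $(0,\infty)$ and $\phi\equiv 0$ on $(-\infty,0]$, so hypotheses (3)--(4) hold but $\phi(r+M)\not\to\infty$. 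So, as written, your upper bound does not prove the lemma under its stated assumptions, and deferring to ``the applications'' does not repair a lemma that is stated (and used) as a standalone result.

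The repair is cheap, and it is exactly what the paper does: anchor the partition at the points where the rate functions vanish, namely $q_0=a_\lambda$ on the left and $q_{m-1}=b\wedge(r-a_\phi)$ on the right. The event $\{L_N+Z_N\ge Nr,\ L_N<Nq_0\}$ forces $Z_N\ge N(r-a_\lambda)$, whose exponent $\phi(r-a_\lambda)=\lambda(a_\lambda)+\phi(r-a_\lambda)$ is itself one of the candidate values in the infimum, so it never dominates; the right residual $\P\{L_N\ge Nq_m\}$ with $q_m$ slightly beyond $q_{m-1}$ has exponent $\lambda(q_m)$, which either is $+\infty$ (when $b\le r-a_\phi$) or equals $\lambda(q_m)+\phi(r-q_m)$ because $\phi(r-q_m)=0$, and is absorbed by continuity as the mesh refines. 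Alternatively you can keep your cut at $-M$ and use the monotonicity of $\phi$ that you yourself record in the lower-bound paragraph: for $M\ge -a_\lambda$ one has $\phi(r+M)\ge\phi(r-a_\lambda)\ge\inf_{a_\lambda\le s\le b\wedge(r-a_\phi)}\{\lambda(s)+\phi(r-s)\}$, so the term is harmless without any growth assumption; the same monotonicity argument applied to $\lambda$, together with the admissible point $s=r-a_\phi$ where $\phi$ vanishes, handles the right edge when $b=\infty$. One last small point: your argument for the case $r\le a_\phi+a_\lambda$ requires $r+2\delta\le a_\phi+a_\lambda$ with $\delta>0$ and therefore misses the boundary case $r=a_\phi+a_\lambda$; taking $\delta=0$ there (using $\lambda(a_\lambda)=\phi(a_\phi)=0$ directly) closes it.
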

\begin{proof}
First observe that the infimum in \eqref{eq:af} is obtained when $s$ satisfies $a_\lambda\leq s\leq b\wedge (r-a_\phi)$.
 
The lower bound follows from the independence of the two random variables
\[\P\{L_N+Z_N\geq Nr\}\geq \P\{Z_N\geq N(r-s)\}\P\{L_N\geq Ns\}.\]

To upper bound for $r\leq a_\lambda+a_\phi$ is immediate. 

We therefore only discuss the case $r>a_\lambda+a_\phi$. Take a finite partition
$a_\lambda=q_{-1}=q_0<\dots<q_{m-1}=b\wedge (r-a_\phi) < q_m=q_{m+1}$.

Use a union bound and the independence of $L_N, Z_N$ to derive 
\begin{align*}
\P\{&L_N+Z_N\geq Nr\}\leq\P\{L_N+Z_N\geq Nr, L_N< Nq_0\}\\
&\hspace{4cm}+\sum_{i=0}^{m-1}\P\{L_N+Z_N\geq Nr, nq_i\leq L_N\leq Nq_{i+1}\}+P\{L_N\geq  Nq_m\}\\
&\leq P\{Z_N \geq N(r-q_0)\}+\sum_{i=0}^{m-1}\P\{Z_N\geq N(r-q_{i+1})\}\P\{L_N\geq Nq_{i}\}+P\{L_N\geq  Nq_m\}.
\end{align*}
Now take the logarithm on both sides, divide by $N$ and finally take $N\to\infty$ to obtain
\begin{align*}
\varlimsup_{N\to\infty}N^{-1}\log\P\{&L_N+Z_N\geq Nr\}\\
&\leq -\min\Big\{\phi(r-q_0),\min_{0\leq i\leq m-1}\{\phi(r-q_{i+1})+\lambda(q_i)\},\lambda(q_m)\Big\}.
\end{align*}
We may simplify the last inequality as 
\[
\P\{L_N+Z_N\geq Nr\}\leq -\min_{-1\leq i\leq m}\{\phi(r-q_{i+1})+\lambda(q_i)\}
\]
This is because $\lambda(q_0) = 0$. Also, if $b \le r - a_{\phi}$ then $\lambda(q_m) = \infty$ and it can be omitted from the minimum. If $b > r - a_{\phi}$ then 
$\phi(r - q_m) = 0$. The result then follows by the continuity of $\lambda$ on $[a_\lambda, b]$ by arbitrarily refining the partition.  
\end{proof}

\section{The critical point in the proof of  Theorem \ref{thm:lam}} 
\label{sec:app1}

We want to find the solutions to 
\be\label{eq:secu}
\begin{aligned}
0=&u^22s[(1-p)(e^\xi-1)+p(1-e^{-\xi})]-up[(1-p)(s+t)(e^\xi+e^{-\xi}-2)+2s(1-e^{-\xi})]\\
&\phantom{xxxxxx}+(e^{-\xi}-1)p((1-p)(s+t)-s),
\end{aligned}
\ee
when $\xi \ge 0$ and prove that the corresponding solution is the minimizing argument for the function
\[
f(u)=sC_\mathcal B^{(u)}(\xi)-tC_\mathcal G^{(\frac{u-p}{u(1-p)})}(-\xi).
\]
The discriminant $\Delta$ of \eqref{eq:secu} is 
\begin{align*}
\Delta
&=[(1-p)p(s+t)e^{-2\xi}(e^\xi -1)^2]^2+e^{-\xi}4p(1-p)st(e^\xi - 1)^2 \ge 0.
\end{align*}
Therefore two solutions and are given by
\[
u^*_\pm=\frac{p(1-p)(s+t)(e^\xi+e^{-\xi}-2)+2sp(1-e^{-\xi})\pm\sqrt{\Delta}}{2s(2p-1+(1-p)e^\xi-pe^{-\xi})}.
\]
We begin by checking if $u^*_\pm\in(p,1]$. It is immediate to check that the  $-\sqrt{\Delta}$ solution 
is not larger than $p$ when $\xi > 0$ and $u^*_- < p$, so we focus on the plus one and $u^*_+$. In that case, the following inequalities are equivalent:
\begin{align*}
&\frac{p(1-p)(s+t)(e^\xi+e^{-\xi}-2)+2sp(1-e^{-\xi})+\sqrt{\Delta}}{2s[(1-p)(e^\xi-1)+p(1-e^{-\xi})]}>p\\
&\frac{p(1-p)(s+t)(e^\xi+e^{-\xi}-2)+2sp(1-e^{-\xi})(1-p)+\sqrt{\Delta}-p2s(1-p)(e^\xi-1)}{2s[(1-p)(e^\xi-1)+p(1-e^{-\xi})]}>0\\
&\frac{p(1-p)(t-s)(e^\xi+e^{-\xi}-2)+\sqrt{\Delta}}{2s[(1-p)(e^\xi-1)+p(1-e^{-\xi})]}>0,
\end{align*}
which is immediately true since the numerator and denominator are always positive for $\xi > 0$. 

The other bound
\begin{align*}
&\frac{p(1-p)(s+t)(e^\xi+e^{-\xi}-2)+2sp(1-e^{-\xi})+\sqrt{\Delta}}{2s[(1-p)(e^\xi-1)+p(1-e^{-\xi})]}\leq1 \Longleftrightarrow\\
&\phantom{xxxxxxxxx}\frac{p(1-p)(s+t)(e^\xi+e^{-\xi}-2)+\sqrt{\Delta}-2s(1-p)(e^\xi-1)}{2s[(1-p)(e^\xi-1)+p(1-e^{-\xi})]}\leq0.
\end{align*}
The denominator is always positive for $\xi>0$, therefore the overall fraction is negative if and only if 
\be\label{upl}
p(1-p)(s+t)(e^\xi+e^{-\xi}-2)+\sqrt{\Delta}-2s(1-p)(e^\xi-1)\leq0.
\ee
We will verify \eqref{upl} when $t < \frac{1-p}{p}s$. When $t$ satisfies this condition it automatically satisfies  
\[
t<\Big(\frac{2}{p(1-e^{-\xi})}-1\Big)s.
\]
When this holds, \eqref{upl} can be equivalently written as 
\[
\sqrt{\Delta} < 2s(1-p)(e^\xi-1) - p(1-p)(s+t)(e^\xi+e^{-\xi}-2).
\]
Both sides of the above inequality are positive, so by squaring both sides we reach the equivalent sequence of inequalities 
%
%
%
\begin{align*}
&4(1-p)^2p^2(s+t)^2(\cosh\xi-1)^2+8stp(1-p)(\cosh\xi-1)\\
&\hspace{1cm}\leq4p^2(1-p)^2(s+t)^2(\cosh\xi-1)^2+4s^2(1-p)^2(e^{2\xi}+1-2e^{\xi})\\
&\hspace{6cm}-8sp(1-p)^2(s+t)(\cosh\xi-1)(e^\xi-1)\\
& \Longleftrightarrow stp(e^\xi-p(e^{\xi}-1))\leq s^2(1-p)e^{\xi}-s^2p(1-p)(e^\xi-1)\\
&\Longleftrightarrow t\leq \frac{(1-p)}{p}s,
\end{align*}
which is true from our hypothesis. This implies that $u^* \in (p,1]$
It remains to argue that $u^*_+$ is the minimizing point.
%

For $\xi>0$, the derivative is positive whenever 
\begin{align*}
&s\frac{e^\xi-1}{1+u(e^\xi-1)}-t\frac{p(p-1)(e^{-\xi}-1)}{u^2(1+p(e^{-\xi}-1))-up[1+e^{-\xi}+p(e^{-\xi}-1)]+p^2e^{-\xi}} = \frac{N(u, \xi)}{D(u, \xi)}>0.
\end{align*}
The numerator $N(u, \xi)$ is given by the right hand side of \eqref{eq:secu} and by what we discussed up to this point, for $\xi > 0$ and $t < \frac{1-p}{p}s$  
\[
N(u,\xi)
\begin{cases}
\geq 0&\qquad \text{if }u\in[u^*_+,1] ,\\
<0&\qquad \text{if }u\in(p,u^*_+). 
\end{cases}
\]
The denominator is 
\[
D(u,\xi)=[1-u+ue^\xi][u^2(1- p(1 -e^{-\xi}))-up[1+e^{-\xi}+p(e^{-\xi}-1)]+p^2e^{-\xi}].
\]
The first factor is always positive for this reason we study the sign of the parabola in the second factor. The coefficient of $u^2$ is positive, for every $\xi >0$ and the factor itself has zeros $u^{**}_\pm$ given by 
%
$u^{**}_-=p$, $u^{**}_+=\frac{pe^{-\xi}}{1-p+pe^{-\xi}}<u^{**}_- $. 
Since our range is $u > p$, the second factor, and hence $D(u, \xi) > 0$. 
Overall, 
\[
\frac{N(u,\xi)}{D(u, \xi)} 
\begin{cases}
\geq 0&\qquad \text{if }u\in[u^*_+,1] ,\\
<0&\qquad \text{if }u\in(p,u^*_+). 
\end{cases}
\]
Therefore $u^*_+$ is a global minimum. \qed

\bibliographystyle{plain}
\end{document}